\documentclass[10pt]{amsart}
\usepackage{amsfonts,amssymb}
\usepackage[all]{xy}

\newtheorem{thm}{Theorem}[section]
\newtheorem{lem}[thm]{Lemma}
\newtheorem{prop}[thm]{Proposition}
\newtheorem{defini}[thm]{Definition}
\newtheorem{cor}[thm]{Corollary}

\theoremstyle{remark}
\newtheorem{remark}[thm]{Remark}

\newcommand*\isom{\xrightarrow{\sim}}
\newcommand{\pair}[1]{\langle#1\rangle}
\newcommand{\largewedge}{\mbox{\Large $\wedge$}}

\def\qq{\mathbb{Q}}

\def\pp{\mathcal{P}}
\def\rr{\mathbb{R}}
\def\zz{\mathbb{Z}}

\def\CC{\mathbb{C}}

\def\mm{\mathcal{M}}
\def\ll{\mathcal{L}}

\def\cc{\mathcal{C}}
\def\jj{\mathcal{J}}
\def\ff{\mathcal{F}}
\def\oo{\mathcal{O}}

\def\dd{\mathcal{D}}

\def\hh{\mathcal{H}}

\def\tt{\mathcal{T}}

\def\ZZ{\mathcal{Z}}

\def\H{H}
\def\d{d}

\def\deldelbar{\partial \overline{\partial}}

\numberwithin{equation}{section}

\begin{document}

\title[Torus bundles and $2$-forms on the universal Riemann surface]{Torus bundles and $2$-forms on the universal family of Riemann surfaces}

\author{Robin de Jong}

\subjclass[2010]{Primary 32G15, secondary 14H15.}

\keywords{Ceresa cycle, Deligne pairing, harmonic volume, jacobian, Johnson homomorphism, moduli space of curves, torus bundle}

\begin{abstract} We revisit three results due to Morita expressing certain natural integral cohomology classes on the universal family of Riemann surfaces $\cc_g$, coming from the parallel symplectic form on the universal jacobian, in terms of the Euler class $e$ and the Miller-Morita-Mumford class $e_1$. Our discussion will be on the level of the natural $2$-forms representing the relevant cohomology classes, and involves a comparison with other natural $2$-forms representing $e$, $e_1$ induced by the Arakelov metric on the relative tangent bundle of $\cc_g$ over $\mm_g$. A secondary object called $a_g$ occurs, which was discovered and studied by Kawazumi around 2008. We present alternative proofs of Kawazumi's (unpublished) results on the second variation of $a_g$ on $\mm_g$. Also we review some results that were previously obtained on the invariant $a_g$, with a focus on its connection with Faltings's delta-invariant and Hain-Reed's beta-invariant.
\end{abstract}

\maketitle
\thispagestyle{empty}
\tableofcontents

\section{Introduction} \label{intro}

Let $g \geq 2$ be an integer. Let $\mm_g$ denote the moduli space of compact Riemann surfaces of genus $g$ and denote by $p \colon \cc_g \to \mm_g$ the universal family of Riemann surfaces over $\mm_g$. The second integral cohomology group $H^2(\cc_g,\zz)$ of $\cc_g$ contains two well known $\qq$-linearly independent classes $e$, $e_1$, where $e$ denotes the Euler class of the relative tangent bundle $T_{\cc_g/\mm_g}$ of $p \colon \cc_g \to \mm_g$, and where $e_1 = \int_p e^2$ (fiber integral) is the first Miller-Morita-Mumford class in $H^2(\mm_g,\zz)$.

Let $\Sigma$ be a reference surface of genus $g$ and put $H=\H_1(\Sigma,\zz)$, viewed as an $\mathrm{Sp}(2g,\zz)$-representation via the intersection product on $H$, which is unimodular. In the usual way the $\mathrm{Sp}(2g,\zz)$-representation $H$ then gives rise to a local system $\hh_\zz$ of free abelian groups of rank $2g$ on $\mm_g$, and by pullback along $p$ on the universal family $\cc_g$. Write $\hh_\rr=\hh_\zz \otimes \rr$. Then the intersection product on $H$ naturally induces a non-degenerate bilinear alternating pairing $M \colon \hh_\rr \otimes \hh_\rr \to \rr$.

From the local system $\hh_\zz$ one constructs the Jacobian torus bundle $\jj=\hh_\rr/\hh_\zz$, together with its higher versions $\jj_k=\largewedge^{2k+1} \hh_\rr / \largewedge^{2k+1} \hh_\zz$ for $k=1,\ldots,g-1$. The intersection pairing $M$ induces canonical maps $M_k \colon \largewedge^{2k+1} \hh_\rr \otimes \largewedge^{2k+1} \hh_\rr \to \rr$ and hence canonical de Rham cohomology classes in $H^2(\jj_k,\rr)$, which we denote by $\phi_k$. It turns out that the class $\phi_k$ is not integral, but the class $2 \, \phi_k$ is. In a series of papers \cite{familiesI} \cite{familiesII} \cite{molinear} S. Morita has studied the pullbacks of the integral classes $2\, \phi_k$ along certain natural sections of $\jj_k$ over $\cc_g$ (and variants), and in particular expressed them as integral linear combinations of tautological classes.
 
In order to state Morita's results, let $\cc^n_g$ for each $n \in \zz_{\geq 1}$ denote the $n$-fold fiber product of $\cc_g$ over $p \colon \cc_g \to \mm_g$. A first natural section that one considers is the section $\kappa \colon \cc_g \to \jj$ of the universal jacobian bundle $\jj \to \cc_g$ given by the map $ (C,x) \mapsto (J(C),[(2g-2)x-K_C]) $. Here $J(C)$ denotes the jacobian of the compact Riemann surface $C$ and $K_C$ is a canonical divisor on $C$. A second natural section is the well known pointed harmonic volume $I \colon \cc_g \to \jj_1$ studied by B. Harris \cite{harris} and M. Pulte \cite{pu} (cf. Section~\ref{pointed} below). Finally, one has the difference map $ \delta \colon \cc_g^2 \to \jj $ given by $(C,(x,y)) \mapsto (J(C),[y-x])$. It is convenient to view $\delta$ as the universal Abel-Jacobi map sending a point $y$ on a pointed Riemann surface $(C,x)$ to its Abel-Jacobi class $[y-x]$ in $J(C)$.

With the above notation, Morita has obtained the following three identities \cite[Theorem 2.1]{familiesI}, \cite[Theorem 1.7]{familiesII}, \cite[Theorem 5.1]{molinear} in integral cohomology.
\begin{thm} (Morita) Let $p_1, p_2 \colon \cc_g^2 \to \cc_g$ denote the projections onto the first and second coordinate, respectively, and let $\Delta$ be the diagonal on $\cc_g^2$. Then the following relations:
\begin{itemize}
\item[(M1)] $2\,\kappa^* \phi_0 = -2g(2g-2)\,e - e_1$
\item[(M2)] $12\,I^* \phi_1 = -6g \, e + e_1 $
\item[(M3)] $2\,\delta^* \phi_0 = 2\,\Delta - p_1^* e - p_2^* e$
\end{itemize}
hold in $H^2(\cc_g,\zz)$ resp. $H^2(\cc_g^2,\zz)$.
\end{thm}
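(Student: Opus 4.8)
The plan is to prove the three identities by recognising, in each case, the pulled‑back theta line bundle as an explicit line bundle built from $\oo(\Delta)$ and the relative dualizing sheaf $\omega=\omega_{\cc_g/\mm_g}$ via Deligne pairings, and then taking first Chern classes; the refinement to an identity of the specific natural $2$-forms (the real point of the paper) will come from metrizing these Deligne pairings by the Arakelov/admissible metrics. I would first record the basic input: if $\mathcal{P}$ denotes the symmetric Poincaré bundle on $\cc_g\times_{\mm_g}\jj$, then $\mathcal{T}=\langle\mathcal{P},\mathcal{P}\rangle^{-1}$ is the theta line bundle on $\jj$ and satisfies $c_1(\mathcal{T})=2\phi_0$, and likewise there is a line bundle $\mathcal{T}_k$ on $\jj_k$ with $c_1(\mathcal{T}_k)=2\phi_k$ (which exists because $2\phi_k$ is integral). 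Deligne pairings are functorial, so pulling $\mathcal{T}$ back along a section of $\jj$ given by a relative‑degree‑zero line bundle $\mathcal{N}$ on a family of curves returns $\langle\mathcal{N},\mathcal{N}\rangle^{-1}$, with first Chern class $-\int(c_1\mathcal{N})^2$ (fibre integral).

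The backbone is (M3). Writing $q\colon\cc_g^3\to\cc_g^2$ for the projection forgetting the first factor (a family of genus‑$g$ curves) and $\Delta_{01},\Delta_{02}$ for the partial diagonals ``first $=$ second'' and ``first $=$ third'', the section $\delta$ corresponds along $q$ to the relative‑degree‑zero bundle $\oo(\Delta_{02}-\Delta_{01})$, so $\delta^*\mathcal{T}=\langle\oo(\Delta_{02}-\Delta_{01}),\oo(\Delta_{02}-\Delta_{01})\rangle_q^{-1}$. Taking $c_1$ and pushing forward along $q$, using that the normal bundle of a diagonal section is $T_{\cc_g/\mm_g}$ (so $\int_q[\Delta_{0i}]^2=p_i^*e$) and that $\Delta_{01},\Delta_{02}$ meet transversally along the small diagonal, which maps isomorphically onto $\Delta\subset\cc_g^2$ (so $\int_q[\Delta_{01}][\Delta_{02}]=[\Delta]$), gives $2\,\delta^*\phi_0=-(p_1^*e-2[\Delta]+p_2^*e)=2[\Delta]-p_1^*e-p_2^*e$, which is (M3). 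On the level of forms one metrizes the $\oo(\Delta_{0i})$ by the admissible metric, whose Green's function on the fibres of $q$ is the Arakelov–Green function $G$; the induced metric on the Deligne pairing has a Chern form computed by the standard $\deldelbar\log G$ argument, and it reproduces $2\,\delta^*$ of the theta form up to exactly the $[\Delta]$- and $e$-terms.

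The other two identities are now specializations. For (M1), the section $\kappa$ corresponds, for the family $p_1\colon\cc_g^2\to\cc_g$, to the relative‑degree‑zero bundle $\mathcal{L}=\oo((2g-2)\Delta)\otimes p_2^*\omega^{-1}$, so $\kappa^*\mathcal{T}=\langle\mathcal{L},\mathcal{L}\rangle_{p_1}^{-1}$ and $2\,\kappa^*\phi_0=-\int_{p_1}c_1(\mathcal{L})^2$; with $c_1(\mathcal{L})=(2g-2)[\Delta]+p_2^*e$ and $\int_{p_1}[\Delta]^2=\int_{p_1}[\Delta]\,p_2^*e=e$, $\int_{p_1}(p_2^*e)^2=e_1$ one gets $-\big((2g-2)^2e+2(2g-2)e+e_1\big)=-2g(2g-2)e-e_1$. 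For (M2), the pointed harmonic volume $I$ is the Abel–Jacobi invariant of the pointed Ceresa cycle $C_x-C_x^-$ in $\jj_1$, and $I^*\mathcal{T}_1$ unwinds, again on a fibre product of copies of $\cc_g$, into a Deligne pairing of that cycle with itself; expanding into cubic intersection numbers of the classes $[\Delta_{ij}]$ and $e$, with the combinatorial factors of the exterior‑cube construction defining $\jj_1$ producing the $12$ and the $6g$, one lands on $12\,I^*\phi_1=-6g\,e+e_1$. As a check, restricting each identity to a single Jacobian fibre recovers the coefficient of $e$; e.g.\ $2\,\kappa^*\phi_0|_C=2(2g-2)^2\,\mathrm{AJ}^*\theta=2g(2g-2)^2[\mathrm{pt}]=-2g(2g-2)\,e|_C$.

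Since the isomorphisms of line bundles above hold on the nose, they already deliver the identities in integral cohomology. The step I expect to be the main obstacle is not this but the promised refinement to an identity of the particular natural $2$-forms: one must compare the Chern forms carried by the Deligne pairings through the admissible metric with the Arakelov‑metric Chern form $\omega_{\mathrm{Ar}}$ representing $e$ and $\int_p\omega_{\mathrm{Ar}}^2$ representing $e_1$. Deligne's isomorphism is then an isometry only up to an explicit real‑analytic factor on $\mm_g$ whose logarithm is (a multiple of) Kawazumi's $a_g$, so the two families of forms differ by the exact form $\deldelbar a_g$ up to universal constants; controlling this discrepancy — equivalently, establishing Kawazumi's formula for the second variation of $a_g$ over $\mm_g$ — is the technical heart and the bridge to the remaining sections. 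The reduction in (M2) of $I^*\phi_1$ to a Deligne pairing of the Ceresa cycle, and the attendant combinatorics, is the other delicate point.
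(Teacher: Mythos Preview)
Your approach is essentially the paper's: it too proves (M1) and (M3) by writing $2\,\kappa^*\omega_0$ and $2\,\delta^*\omega_0$ as $-c_1$ of the Deligne self-pairing of $\oo((2g-2)\Delta)\otimes p_2^*T$ resp.\ $\oo(y-x)$ and expanding biadditively, and proves (M2) by invoking Pulte's identification $\gamma=2I$ of the pointed harmonic volume with the Abel--Jacobi image of the Ceresa cycle and then computing a fibre integral of a cube over $\cc_g^3$.

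One correction to your expectations about the form-level refinement: the Deligne pairings in (K1) and (K3) are exact isometries for the Arakelov metric, so those identities hold on the nose as equalities of $2$-forms with \emph{no} $a_g$ correction. The $\partial\bar\partial\,a_g$ term appears only in (K2), and it does not arise from a failure of isometry in a Deligne pairing; rather, writing $12\,I^*\omega_1$ as a fibre integral of $h^3$ over $\cc_g^2\to\mm_g$, one finds $\int_F h^3 = e_1^A - 2\sqrt{-1}\,\partial\bar\partial\,a_g$ directly from the defining $\partial\bar\partial$-equation for $\log G$ together with the explicit formula $a_g=\frac{1}{2\pi}\int_{C\times C}\log G\cdot h^2$. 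So the ``technical heart'' you anticipate is localised entirely in the cubic computation for (K2), and is rather shorter than you suggest.
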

We note that the formulation in Morita's papers is slightly different: there, the relations (M1)--(M3) are presented as identities for characteristic classes of oriented surface bundles. In particular, one may view these relations as part of the topological study of oriented surfaces, and indeed, the methods employed in \cite{familiesI} \cite{familiesII} \cite{molinear} are distinctively topological. For example, the pointed harmonic volume $I$ realizes the first extended Johnson homomorphism \cite{extension} on the mapping class group of pointed oriented surfaces. 


In the present paper, as opposed to Morita's \cite{familiesI} \cite{familiesII} \cite{molinear}, we would like to take a more analytical point of view. We start out by observing that each of the classes $\phi_k$ is represented by a canonical parallel $2$-form, which we denote by $\omega_k$. The form $\omega_0$ on $\jj$ for example is precisely the symplectic form associated to the intersection product. It follows that the left hand sides of Morita's identities (M1)--(M3) are represented by canonical $2$-forms $2\,\kappa^* \omega_0$, $12\,I^* \omega_1 $ and $2\,\delta^* \omega_0 $.

Combining (M1) and (M2) we find that the $2$-form
\[ e^J = -\frac{1}{2g(2g+1)}(2\,\kappa^*\omega_0+12\,I^*\omega_1)  \]
is a representative of the Euler class $e$. It seems natural to compare the $2$-form $e^J$ with other natural $2$-forms on $\cc_g$ representing $e$. The suggestion of N. Kawazumi \cite{kawhb} \cite{kawpr} is to consider the Arakelov metric on  $T_{\cc_g/\mm_g}$ (cf. Section \ref{Green-metric} below). Introduced by S.Y. Arakelov in \cite{ar} this is a second natural object on $\cc_g$ intimately related to the intersection form on $\hh_\rr$ and hence to the symplectic form $\omega_0$ on the jacobian bundle $\hh_\rr/\hh_\zz$.

Denote by $e^A$ the first Chern form of the Arakelov metric on $T_{\cc_g/\mm_g}$. In the paper \cite{kawpr} Kawazumi determined the difference between the forms $e^A$ and $e^J$. Note that as this difference represents the zero class in cohomology, by general results it can be represented as the $\deldelbar$ of a $C^\infty$ function on $\cc_g$, which is pulled back from $\mm_g$ and uniquely defined up to an additive constant. The question is then: what `is' this function?

Let $C$ be a compact Riemann surface of genus $g \geq 2$. In \cite{kawhb} \cite{kawpr} Kawazumi defines
\begin{equation} \label{defag} a_g(C) = - \sum_{i,j=1}^g \int_C \left( \psi_i \wedge \bar{\psi}_j \Phi(\bar{\psi}_i \wedge \psi_j) + \bar{\psi}_i \wedge \psi_j \Phi(\psi_i \wedge \bar{\psi}_j) \right) \, , 
\end{equation}
where $\Phi \colon D^2(C) \to D^0(C)$ is the Green operator with respect to the Arakelov volume form $\mu$, and where $(\psi_1,\ldots,\psi_g)$ is an orthonormal basis of the space $H^{1,0}$ of holomorphic $1$-forms on $C$ (cf. Section \ref{volume}). The invariant $a_g(C)$ turns out to be a conformal invariant of $C$, which is in fact positive real-valued. One obtains a natural function $a_g \colon \mm_g \to \rr$ on the moduli space of Riemann surfaces, and Kawazumi proved in \cite{kawpr} that this function yields precisely the secondary object relating $e^A$ and $e^J$.
\begin{thm} (Kawazumi, \cite[Theorem 0.1]{kawpr}) \label{kaw} The identity
\[ e^A - e^J = \frac{-2 \sqrt{-1}}{2g(2g+1)} \,\deldelbar \,a_g \]
of $2$-forms holds on $\cc_g$.
\end{thm}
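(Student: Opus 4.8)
The plan is to recast the asserted identity in a form suited to a variational computation. Using the defining expression $e^J=-\frac{1}{2g(2g+1)}\bigl(2\,\kappa^*\omega_0+12\,I^*\omega_1\bigr)$ and clearing denominators, Theorem~\ref{kaw} is equivalent to the identity
\[
  2g(2g+1)\,e^A\;+\;2\,\kappa^*\omega_0\;+\;12\,I^*\omega_1\;=\;-2\sqrt{-1}\,\deldelbar\,a_g
\]
of real $(1,1)$-forms on $\cc_g$. Since $a_g$ is pulled back from $\mm_g$, the right hand side is $p^*\gamma$ for a real $(1,1)$-form $\gamma$ on $\mm_g$. The strategy is therefore: (i) show that the left hand side, too, is basic, i.e.\ of the form $p^*$ of a $2$-form on $\mm_g$; (ii) identify that $2$-form with $-2\sqrt{-1}\,\deldelbar\,a_g$. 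Note that both sides are closed, and that by Morita's (M1) and (M2) the class of the left hand side equals $2g(2g+1)e+2\kappa^*\phi_0+12\,I^*\phi_1=0$ in $H^2(\cc_g,\rr)$; so the left hand side is $d$-exact, as it must be. (The additive ambiguity in $a_g$ affects neither side.)

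First I would dispose of the purely vertical part, i.e.\ the restriction to a fibre $C=p^{-1}(m)$. By the definition of the Arakelov metric (Section~\ref{Green-metric}), $e^A|_C=(2-2g)\,\mu$, a multiple of the canonical form $\nu$ on $C$. Both $\kappa|_C\colon C\to J(C)$ and $I|_C\colon C\to\jj_1$ are, up to translation, Abel--Jacobi-type maps composed with isogenies, so $\kappa^*\omega_0|_C$ and $I^*\omega_1|_C$ are pullbacks along such maps of translation-invariant forms; since the relevant Hermitian form on $H^{1,0}$ is $U(g)$-invariant, hence unique up to scalar, $\mathrm{Sp}(2g,\zz)$-equivariance forces both pullbacks to be multiples of $\nu$ as well (for $\kappa^*\omega_0|_C$ this may also be seen by a direct period computation, which in addition pins down the constant as $\tfrac12(2g-2)^2$). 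Hence the left hand side restricts on $C$ to a multiple of $\nu$; its integral over $C$ is the pairing of the zero class with $[C]$, and hence vanishes, so the restriction is $0$. In particular both sides of the reformulated identity have vanishing vertical part.

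The main work --- and the step I expect to be the principal obstacle --- is to control the mixed (vertical--horizontal) part and the horizontal part, i.e.\ the genuine variation over $\mm_g$; this is the realm of second-variation formulas in the sense of Kodaira--Spencer theory, for each of the three terms. The contribution $2\,\kappa^*\omega_0$ is essentially classical: it is (a twist of) the curvature of the Deligne pairing $\pair{\omega_{\cc_g/\mm_g},\omega_{\cc_g/\mm_g}}$, and comparing its natural metrized version against the Arakelov-metrized one introduces only known quantities, ultimately the canonical Green's function --- this is also the source of the link with Faltings's $\delta$-invariant. The genuinely new term is $12\,I^*\omega_1$: using the description of the pointed harmonic volume as a realization of the first extended Johnson homomorphism via iterated integrals (Section~\ref{pointed}), one differentiates the Harris--Pulte form along moduli directions; the variation of the Hodge structure underlying $\jj_1$ and of the corresponding harmonic projector brings in precisely the Arakelov Green operator $\Phi$ of Section~\ref{volume}, and after integration over the fibre the contributions not already cancelled against $e^A$ assemble into the expression \eqref{defag} defining $a_g(C)$. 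Carrying this through shows both that the mixed part of the left hand side vanishes (so it descends to $\mm_g$) and that the descended form equals $-2\sqrt{-1}\,\deldelbar\,a_g$.

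The delicate point is thus the simultaneous bookkeeping of three variations --- of the complex structure on the fibre, of the Arakelov metric and its Green's function, and of the pointed harmonic volume $I$ itself --- together with the verification that all of the Green-operator contributions combine into exactly \eqref{defag} with the stated constant $-2\sqrt{-1}/(2g(2g+1))$, and with no residual multiple of $e^A$ or of the tautological classes $e$, $e_1$ left over; it is precisely this cancellation that makes $e^A-e^J$ a pure $\deldelbar$ and that underlies the relation of $a_g$ to Faltings's delta-invariant and Hain--Reed's beta-invariant. An alternative and perhaps cleaner organization, closer to the paper's stated aim of reproving Kawazumi's variational formulas for $a_g$, is to take Morita's (M1)--(M2) for granted and instead establish directly the two second-variation identities --- one for $\kappa^*\omega_0$ and one for $I^*\omega_1$ on $\cc_g$ --- and then simply add them and use $[e^A]=e$.
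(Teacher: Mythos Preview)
What you have written is a strategy outline, not a proof: the ``main work'' and ``delicate point'' are named but not carried out, and the direct variation of the iterated integrals defining $I$ along moduli directions is precisely the hard step you would still have to do. So as it stands there is a genuine gap.

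The paper follows what you call the ``alternative'' at the end: it proves the two identities (K1) and (K2) of Theorem~\ref{main} separately and then simply adds them. For (K1) the argument is exactly the Deligne-pairing computation you allude to. The essential point you are missing is how the paper handles $12\,I^*\omega_1$ without ever differentiating iterated integrals. The key input is Pulte's theorem (Proposition~\ref{pulte}): the pointed harmonic volume $I$ equals one half of the Griffiths Abel--Jacobi section $\gamma$ attached to the Ceresa cycle $C_x-C_x^-$. Combined with the general fibre-integral formula for such sections (Proposition~\ref{fiberint}) and the algebraic identity $\psi^3=-12\,\omega_1^\natural$, this converts $12\,I^*\omega_1$ into an honest fibre integral $-\int_{p_{(1)}}(\delta_1^*\psi)^3$ over $\cc_g^3\to\cc_g$. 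That integral is then computed by expanding $\delta_1^*\psi$ via the Deligne pairing as a combination of pullbacks of the Arakelov form $h$ and of $e^A$; the term $\int_F h^3$ that survives is identified with $e_1^A-2\sqrt{-1}\,\deldelbar\,a_g$ by the closed formula $a_g=\frac{1}{2\pi}\int_{C\times C}\log G\cdot h^2$ (Proposition~\ref{formula_ag}) and a single application of $\deldelbar$ under the fibre integral. No Kodaira--Spencer bookkeeping, no variation of the harmonic projector, and no direct second variation of $I$ are needed.

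One further remark: you invoke Morita's (M1)--(M2) to see that the left hand side is $d$-exact. The paper deliberately avoids this; (K1) and (K2) are proved from scratch, and (M1)--(M2) are recovered by passing to cohomology. If you want your argument to be independent of Morita's results, as the paper's is, you should not use them even for this soft step.
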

As a variant on the form $e^J$, consider the $2$-form
\begin{equation} \label{e_1^J} e_1^J = \frac{1}{2g+1} (-6\,\kappa^*\omega_0 + 12\, (2g-2)I^*\omega_1 ) \, .\end{equation}
By (M1) and (M2) the form $e_1^J$ represents the tautological class $e_1$. Let $e_1^F = \int_p (e^J)^2$.  Then we have the following variant on Theorem \ref{kaw}, also proved in \cite{kawpr}.
\begin{thm} (Kawazumi, \cite[Theorem 0.2]{kawpr}) \label{secondidentity} The equality
\[ \frac{-2\sqrt{-1}}{2g(2g+1)} \, \deldelbar \, a_g = \frac{1}{(2g-2)^2}(e_1^F - e_1^J) \]
holds on $\cc_g$.
\end{thm}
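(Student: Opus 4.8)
The plan is to deduce the statement from Theorem~\ref{kaw}, which identifies the left-hand side with the $2$-form $e^A-e^J$. Thus it suffices to prove the identity of $2$-forms
\begin{equation}\label{eq:red1}
(2g-2)^2\,(e^A-e^J)=e_1^F-e_1^J
\end{equation}
on $\cc_g$. Since $a_g$ is a function on $\mm_g$, Theorem~\ref{kaw} shows that $\theta:=e^A-e^J$ is pulled back along $p$ from a $2$-form on $\mm_g$, and $e_1^F=\int_p(e^J)^2$ is a fiber integral, hence also pulled back from $\mm_g$. Comparing \eqref{eq:red1} with Theorem~\ref{kaw} then shows, a posteriori, that $e_1^J$ is pulled back from $\mm_g$ as well — a point one could alternatively establish beforehand from form-level refinements of Morita's relations (M1)--(M3).

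Next I would trade $e_1^F$ for the Arakelov self-intersection form $\int_p(e^A)^2$. Writing $e^J=e^A-\theta$ and expanding $(e^J)^2=(e^A)^2-2\,e^A\wedge\theta+\theta^2$, I apply the fiber integral $\int_p$: the term $\int_p\theta^2$ vanishes because $\theta$ is pulled back and the fibers have complex dimension one, while $\int_p(e^A\wedge\theta)=(2-2g)\,\theta$ by the projection formula together with $\int_p e^A=2-2g$ (Gauss--Bonnet for the Euler class of $T_{\cc_g/\mm_g}$ on a genus-$g$ fiber). Hence $e_1^F=\int_p(e^A)^2+(4g-4)\,\theta$, and \eqref{eq:red1} becomes equivalent to
\begin{equation}\label{eq:red2}
\int_p(e^A)^2-e_1^J=(2g-2)(2g-4)\,\theta .
\end{equation}
Both forms on the left of \eqref{eq:red2} represent $e_1=\int_p e^2$, so their difference is automatically $\deldelbar$ of a function on $\mm_g$, in agreement with the right-hand side; the content lies in identifying that function with a multiple of $a_g$.

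The heart of the argument is therefore \eqref{eq:red2}: a comparison between the self-intersection form of the Arakelov metric on $T_{\cc_g/\mm_g}$ and Morita's form $e_1^J$. I would reduce it to a handful of fiber integrals by unwinding the two sections at play: from $e^J=-\tfrac{1}{g(2g+1)}\bigl(\kappa^*\omega_0+6\,I^*\omega_1\bigr)$ and $e_1^J=\tfrac{6}{2g+1}\bigl(-\kappa^*\omega_0+2(2g-2)\,I^*\omega_1\bigr)$, everything reduces to evaluating $\int_p(\kappa^*\omega_0)^2$, $\int_p\kappa^*\omega_0\wedge I^*\omega_1$ and $\int_p(I^*\omega_1)^2$. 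The convenient device is a form-level version of Morita's (M3), say $2\,\delta^*\omega_0=2\,\Delta-p_1^*e^A-p_2^*e^A$ on $\cc_g^2$ with $\Delta$ the smooth form for the diagonal class furnished by the Arakelov metric and its Green operator: since $\kappa$ is fiberwise a multiple of an Abel--Jacobi map (obtained from $\delta$ by integrating the second variable against a canonical divisor, i.e.\ against the fiberwise class of $e$) and $I$ is built from the harmonic, equivalently $\mu$-Green, projections defining the pointed harmonic volume, the forms $\kappa^*\omega_0$, $I^*\omega_1$ and their products push down to explicit expressions in $\delta^*\omega_0$, $\Delta$, $p_1^*e$, $p_2^*e$ and the Green operator $\Phi$. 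Carrying out the three fiber integrals then produces, besides multiples of $e^A$, $\kappa^*\omega_0$ and $I^*\omega_1$, exactly one genuinely new term: a fiber integral in which two Green projections of products of the $\psi_i$ and the $\bar\psi_j$ are paired against each other over the fiber, and this is precisely the sum \eqref{defag} defining $a_g$. Matching coefficients yields \eqref{eq:red2}, whence the theorem upon combining with the reduction above and with Theorem~\ref{kaw}. I expect this final fiber-integral computation — equivalently, the comparison of the Arakelov-induced and the Hodge/symplectic-induced metrics on the Deligne pairing $\langle T_{\cc_g/\mm_g},T_{\cc_g/\mm_g}\rangle$, in which the defining expression for $a_g$ has to be recognized — to be the main obstacle; the surrounding steps are formal manipulations with fiber integrals and the projection formula.
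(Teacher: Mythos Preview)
Your opening reduction is exactly what the paper does: using Theorem~\ref{kaw} to identify the left-hand side with $\theta:=e^A-e^J$, and then expanding $e_1^F=\int_p(e^J)^2=\int_p(e^A-\theta)^2$ via the projection formula to get $e_1^F-e_1^A=2(2g-2)\,\theta$. So far so good, and this matches the paper line for line.

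The divergence is in how you handle the remaining piece, your equation~\eqref{eq:red2}, i.e.\ the comparison of $e_1^A$ with $e_1^J$. Here the paper does \emph{not} compute any fiber integrals of products of $\kappa^*\omega_0$ and $I^*\omega_1$. It simply substitutes the form-level identities (K1) and (K2) of Theorem~\ref{main},
\[
2\,\kappa^*\omega_0=-2g(2g-2)\,e^A-e_1^A,\qquad 12\,I^*\omega_1=-6g\,e^A+e_1^A-2\sqrt{-1}\,\deldelbar\,a_g,
\]
into the \emph{definition} $e_1^J=\tfrac{1}{2g+1}(-6\,\kappa^*\omega_0+12(2g-2)\,I^*\omega_1)$. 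This is a two-line linear-algebra step (recorded as Proposition~\ref{AandJ}) and yields $e_1^J-e_1^A=\tfrac{-2(2g-2)\sqrt{-1}}{2g+1}\,\deldelbar\,a_g$ directly; combining with your first step finishes the proof. No Green-operator expansion, no new fiber integrals.

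There is a genuine confusion in your plan for this step. The quantity $e_1^J$ is \emph{defined} as a linear combination of $\kappa^*\omega_0$ and $I^*\omega_1$; it is not a fiber integral, so the three integrals $\int_p(\kappa^*\omega_0)^2$, $\int_p\kappa^*\omega_0\wedge I^*\omega_1$, $\int_p(I^*\omega_1)^2$ simply do not occur in~\eqref{eq:red2}. Likewise $e_1^A=\int_p(e^A)^2$ is a fiber integral of $(e^A)^2$, not of those products. What you actually need in order to compare $e_1^A$ and $e_1^J$ is an expression for $\kappa^*\omega_0$ and $I^*\omega_1$ in terms of $e^A$, $e_1^A$ and $\deldelbar\,a_g$; that is precisely the content of (K1)--(K2). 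Your proposed ``main obstacle'' (recognising $a_g$ inside a fiber integral built from $\Phi$ and the $\psi_i,\bar\psi_j$) is the hard work already packaged into (K2) and Theorem~\ref{kaw}; once those are available, the passage to Theorem~\ref{secondidentity} is pure bookkeeping, not a second analytic computation.
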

Unfortunately, the paper \cite{kawpr} has not been published; both results above are stated in \cite{kawhb}. One aim of the present paper is to give another proof of Theorems \ref{kaw} and \ref{secondidentity}. In fact, we will prove slightly more.

Recall that $\Delta$ denotes the diagonal on $\cc_g^2$. We let $h$ denote the first Chern form of the hermitian line bundle $\oo(\Delta)$ on $\cc_g^2$, where the metric on $\oo(\Delta)$ is given by the Arakelov's Green's function $G(x,y)$ (cf. Section \ref{Green-metric}). We have $e^A = h|_\Delta$. Putting
\[ e_1^A = \int_p (e^A)^2   \]
one obtains a natural $2$-form on $\mm_g$ representing the class $e_1$. 

Our main result is then the following. 
\begin{thm} \label{main} Let $p_1, p_2 \colon \cc_g^2 \to \cc_g$ denote the projections onto the first and second coordinate, respectively, and let $\Delta$ be the diagonal on $\cc_g^2$. Let $a_g$ be Kawazumi's invariant (\ref{defag}). Then we have the equalities
\begin{itemize} 
\item[(K1)] $2\,\kappa^* \omega_0 = -2g(2g-2) \, e^A - e_1^A $
\item[(K2)] $12\,I^* \omega_1 = -6g \, e^A + e_1^A -2\sqrt{-1}\,\deldelbar \,a_g $
\item[(K3)] $ 2\,\delta^*\omega_0 = 2\,h - p_1^* e^A - p_2^* e^A $
\end{itemize}
of $2$-forms on $\cc_g$ resp. $\cc^2_g$.
\end{thm}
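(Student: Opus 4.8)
The starting point is Morita's triple (M1)--(M3) in integral cohomology. Each left-hand side of (K1)--(K3) is represented by the canonical parallel $2$-form indicated, while on the right $h$ represents $\Delta$, $e^A$ represents $e$, and $e_1^A=\int_p(e^A)^2$ represents $e_1$. So each of (K1)--(K3) is the assertion that two $\deldelbar$-cohomologous $(1,1)$-forms coincide; by the same general principle invoked in the Introduction, each therefore holds up to $\deldelbar$ of a smooth function on $\cc_g$, resp.\ on $\cc_g^2$, and the whole problem is to compute that function exactly---the claim being that it vanishes for (K1) and (K3), and equals (a constant multiple of) $a_g$, pulled back from $\mm_g$, for (K2). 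The plan is to prove (K3) by a direct computation in Arakelov theory on $\cc_g^2$, to deduce (K1) from it by a fibre-integration argument, and to obtain (K2)---which carries the only essential new content---by computing the pull-back of the parallel form $\omega_1$ along the pointed harmonic volume.

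For (K3) I would first restrict to a fibre $C\times C$ over a point $[C]\in\mm_g$. There $\omega_0$ is the polarisation form on $J(C)$, and Arakelov's relation between the admissible Green's function $G$ and the polarisation gives $\delta^*\omega_0 = h + (g-1)(p_1^*\mu+p_2^*\mu)$ on $C\times C$; since $e^A|_C=-(2g-2)\mu$ this is exactly (K3) on the fibre. Hence the difference of the two sides of (K3) is $\deldelbar$ of a function that is fibrewise pluriharmonic, so fibrewise constant, hence pulled back from $\mm_g$; that it is in fact constant---and so disappears after $\deldelbar$---one checks by comparing the remaining horizontal components of both sides, using the Kodaira--Spencer formulae for the variation over $\mm_g$ of $G$, of $\mu$, and of the flat form $\omega_0$. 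For (K1) one notes that $\kappa$ corresponds, via $p_1\colon\cc_g^2\to\cc_g$, to the $p_1$-relative degree-zero line bundle $\mathcal{L}=\oo_{\cc_g^2}((2g-2)\Delta)\otimes p_2^*\omega_{\cc_g/\mm_g}^{-1}$, carrying the metric built from $G$ on $\oo(\Delta)$ and from the Arakelov metric on $\omega_{\cc_g/\mm_g}$, with first Chern form $(2g-2)\,h+p_2^*e^A$. The pull-back of $\omega_0$ along the section of $\jj$ attached to a metrised relative degree-zero line bundle equals $-\tfrac12$ times the fibre integral along $p_1$ of the square of its first Chern form---a Deligne-pairing identity---and expanding this square, using $h|_\Delta=e^A$, the identities $\int_{p_1}(h^2)=e^A$ and $\int_{p_1}(h\wedge p_2^*e^A)=e^A$, and $\int_{p_1}(p_2^*e^A)^2=e_1^A$, one obtains $2\,\kappa^*\omega_0=-2g(2g-2)\,e^A-e_1^A$. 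The delicate point here is that these push--pull identities must hold exactly at the level of forms and not merely modulo $\deldelbar$-exact terms, which is checked within the Arakelov (Deligne-pairing) formalism.

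The identity (K2) is the main obstacle. On a single fibre it reduces to $12\,I^*\omega_1=6g(2g-2)\,\mu$ on $C$, which follows from the Hodge-theoretic description of the pointed harmonic volume due to Harris and Pulte; since $a_g$ is constant on fibres it contributes nothing at this level, so the content of (K2) lies entirely in the \emph{second} variation of $I^*\omega_1$ over $\mm_g$---precisely the object of Kawazumi's unpublished computation. To carry this out I would realise $I$, following Harris, as the Abel--Jacobi image of an explicit algebraic cycle on a fibre power of $\cc_g$ (a pointed version of the Ceresa cycle), so that $I^*\omega_1$ becomes computable from the already-understood forms $h$ and $\delta^*\omega_0$ after a fibre integration in the spirit of the one used for (K1). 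The mismatch between the flat normalisation implicit in $\omega_1$ and the Arakelov normalisation of $h$ then produces a correction term built from the Green operator $\Phi$, which one matches to $-2\sqrt{-1}\,\deldelbar a_g$ by unwinding the definition (\ref{defag}); the technical core is the Rauch/Kodaira--Spencer variational formula for the period matrix and for $G$, together with a careful accounting of the action of $\Phi$ on the forms $\psi_i\wedge\bar\psi_j$. As before, what remains at the end is $\deldelbar$ of a function pulled back from $\mm_g$, which one shows to be constant by testing against fibres; this completes the proof. Finally, Theorems \ref{kaw} and \ref{secondidentity} follow formally on combining (K1)--(K3) with the definitions of $e^J$, $e_1^J$, $e_1^F$ and $e_1^A$ and the projection formula for $\int_p$.
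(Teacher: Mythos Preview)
Your outline has the right shape for (K1) and for the role of the Ceresa cycle in (K2), but there is a genuine gap in execution, and your framing via (M1)--(M3) is the opposite of the paper's.

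The paper does \emph{not} start from Morita's cohomological identities; it proves (K1)--(K3) directly at the level of forms and recovers (M1)--(M3) as a corollary. The single tool that makes this possible is Proposition~\ref{deligneflatform}: for any hermitian line bundle $\ll$ on $\cc$ whose curvature vanishes fibrewise, one has the exact identity of $2$-forms $2\,[\ll]^*\omega_0 = -c_1(\langle \ll,\ll\rangle)$. This is what you correctly invoke for (K1), and the paper expands the Deligne pairing exactly as you do. But the paper uses the \emph{same} device for (K3): writing $2\,\delta^*\omega_0 = -c_1(\langle \oo(y-x),\oo(y-x)\rangle)$ and expanding by biadditivity gives (K3) in one line, with no fibrewise-then-horizontal decomposition and no Kodaira--Spencer input at all. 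Your proposed route for (K3) --- match on fibres, then chase the horizontal part via variational formulae for $G$, $\mu$, and $\omega_0$ --- might be made to work, but it is both harder and unnecessary; and the step ``one checks by comparing the remaining horizontal components'' is precisely where the content lies and is not actually carried out.

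For (K2) the gap is more serious. You correctly propose to realise $I$ via the Ceresa cycle (this is Pulte's $\gamma = 2I$) and to compute $I^*\omega_1$ by a fibre integral, which is exactly what the paper does. But the paper then makes this completely explicit: one has $\psi^3 = -12\,\omega_1^\natural$ for $\psi = 2\,\omega_0^\natural$, hence $12\,I^*\omega_1 = -\int_{p_{(1)}}(\delta_1^*\psi)^3$; and $\delta_1^*\psi$ is again computed by Proposition~\ref{deligneflatform} as $-(p_{23}^*h - p_{12}^*h - p_{13}^*h + p_{(1)}^*e^A)$. Expanding the cube and using elementary push--pull identities for $h$ yields $-6g\,e^A + \int_F h^3$, while Proposition~\ref{a_gfiberint} gives $\int_F h^3 = e_1^A - 2\sqrt{-1}\,\deldelbar a_g$ directly from $\deldelbar\log G = \pi\sqrt{-1}(h-\delta_\Delta)$. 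No Rauch or Kodaira--Spencer formulae enter; the appearance of $a_g$ is an immediate consequence of its expression as $\frac{1}{2\pi}\int_{C\times C}\log G\, h^2$. Your sentence ``the mismatch\ldots produces a correction term\ldots which one matches to $-2\sqrt{-1}\,\deldelbar a_g$ by unwinding the definition'' is exactly the computation that has to be done, and your proposal does not do it. Also, the closing clause ``what remains\ldots one shows to be constant by testing against fibres'' is circular: a function pulled back from $\mm_g$ cannot be shown constant by restricting to fibres.
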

Note that Theorem \ref{kaw} follows as an immediate consequence of the equalities (K1) and (K2). We will give a short derivation of Theorem \ref{secondidentity} from Theorem \ref{main} at the end of Section \ref{K2}.

The approach in \cite{kawpr} is based on the harmonic Magnus expansion on the universal family of Riemann surfaces. This map defines a flat connection on a vector bundle on the total space of $T_{\cc_g/\mm_g}$ minus the zero-section. The holonomy of this connection gives all the Johnson homomorphisms on the mapping class group $\pi_1(T_{\cc_g/\mm_g} \setminus \textrm{0-section})$. In particular, the first term of the connection form coincides with the first variation $dI \in A^1(\cc_g,\largewedge^3 \hh_\rr)$ of the pointed harmonic volume \cite[Section~8]{magnus}.

Our approach is rather different and more geometric in nature. An important ingredient in our proof of Theorem \ref{main} is the use of the Deligne pairing for families of Riemann surfaces, a refined version of the Gysin map in cohomology  (cf. Section~\ref{sec:deligne} below). Using the Deligne pairing our proofs of especially (K1) and (K3) are remarkably straightforward. For the proof of (K2) we will use, besides the Deligne pairing, the connection discovered by Hain and Pulte \cite{pu} between the pointed harmonic volume and Ceresa's cycle $C_x - C_x^-$ \cite{ce} in the jacobian of a pointed compact Riemann surface $(C,x)$. 

The proof we give of Theorem \ref{main} is independent of Morita's results (M1)--(M3) and in particular gives a new argument for them, by taking classes in cohomology. Another approach is given in the paper \cite{hrgeom} by R. Hain and D. Reed. Because of its fundamental relationship with the cohomology of $\mm_g$, we think that the function $a_g$ merits to be further studied. For example, in his paper \cite{zh} S. Zhang independently introduced the invariant $\varphi_g = 2\pi \, a_g$ and showed the relation of its strict positivity with some important conjectures in arithmetic geometry. In the papers \cite{djsecond}, \cite{djhyp}, \cite{djas}, \cite{djnorm} the present author found a number of further properties of $a_g$, including its asymptotics towards the boundary of $\mm_g$ in the Deligne-Mumford compactification, and closed expressions in the case of a hyperelliptic Riemann surface. We will review these results briefly in our final Section \ref{hainreedfaltings}. 

The starting point of that section is formed by a 1984 paper by G. Faltings \cite{fa} and a 2004 paper by R. Hain and D. Reed \cite{hrar} where one finds a discussion of two other natural invariants in $C^\infty(\mm_g,\rr)$ connected with the geometry of the universal family of (higher) jacobians. In both papers, the determinant of the Hodge bundle with its $L^2$-metric plays a dominant role. In his paper \cite{kawpr}, Kawazumi posed the question of determining the relationship of his invariant $a_g$ with the previously defined invariants $\delta_g$ by Faltings and $\beta_g$ by Hain-Reed. We will show in Section \ref{hainreedfaltings} that one can ``eliminate'' the role of the Hodge bundle, and arrive at a linear dependence relation
\[ \beta_g = \frac{1}{3}\left( 2\pi \, (2g-2) \, a_g + (2g+1) \, \delta_g \right) \]
in $C^\infty(\mm_g,\rr)$. By combining the asymptotics towards the boundary of $\mm_g$ of $a_g$ from our paper \cite{djas} with previously established asymptotics of $\delta_g$ by J. Jorgenson \cite{jo} and R. Wentworth \cite{we}, we thus obtain asymptotics of $\beta_g$. This will furnish an alternative proof of the main result of Hain-Reed's paper \cite{hrar}.  \\

\emph{Conventions.---} For $M$ a complex manifold, a hermitian line bundle on $M$ consists of a holomorphic line bundle $\ll$ on $M$ together with a hermitian norm $\|\cdot \|$ on each fiber of $\ll$, varying in a $C^\infty$ manner over $ M$. More precisely, if $s$ is a local generating section of $\ll$, then the functions $\|s\|^2$ are $C^\infty$. The first Chern form $c_1(\ll)$ of the hermitian line bundle $\ll$ is the real $2$-form on $M$ given locally by the expression $ c_1(\ll) = \frac{\deldelbar}{2\pi \sqrt{-1}} \log \|s\|^2 $, where $s$ is a local generator of $\ll$. Note that the $2$-form $c_1(\ll)$ is independent of the chosen local generator, and represents the first Chern class of $\ll$ in the cohomology group $H^2(M,\rr)$. We use the notation $A^i(M)$ for the space of real valued differential $i$-forms on $M$. If $Z \subset M$ is an analytic subvariety, we denote by $\delta_Z$ the Dirac current associated to $Z$. The current $\delta_Z$ is closed and positive. When $M,N$ are complex manifolds, we say that a surjective map $p \colon M \to N$ is a family if $p$ is a proper holomorphic submersion.

If $V$ is a finite dimensional complex vector space, we shall view $\largewedge^2 V$ as a subspace of $V^{\otimes 2}$ via the embedding $\lambda \wedge \mu \mapsto \lambda \otimes \mu - \mu \otimes \lambda$. For each $m \in \zz$ we have a canonical direct sum decomposition $\largewedge^{2m}(V \oplus V) \isom \bigoplus_{i+j=2m} \largewedge^i V \otimes \largewedge^j V$ of vector spaces. This gives rise to a natural inclusion $\largewedge^2 \largewedge^m V \rightarrowtail (\largewedge^m V)^{\otimes 2} \rightarrowtail \largewedge^{2m}(V \oplus V)$ which we shall denote by $\alpha \mapsto \alpha^\natural$. \\

\emph{Acknowledgments.---} The author thanks Nariya Kawazumi for his many remarks and suggestions.

\section{Compact Riemann surfaces and their jacobians} \label{zero}

Let $C$ be a compact and connected Riemann surface of genus $g \geq 1$ and write $H_\zz=H_1(C,\zz)$. Let $H_\CC=H_\zz \otimes \CC$ and write $H_\CC^*=\mathrm{Hom}_\CC(H_\CC,\CC)$. The Hodge $*$-operator on the real cotangent bundle $T_\rr^*C$ gives rise to a Hodge decomposition
\begin{equation} \label{hodge} H_\CC^* \isom H^{1,0} \oplus H^{0,1} = H^{1,0} \oplus \overline{H^{1,0}} \, .
\end{equation}
The complex vector space $H^{1,0}$ can be identified with the set of holomorphic $1$-forms on $C$, the vector space $H^{0,1}=\overline{H^{1,0}}$ with the set of anti-holomorphic $1$-forms on $C$, and the total space $H_\CC^*$ with the set $\hh$ of harmonic $1$-forms on $C$. Write $H_\rr=H_\zz \otimes \rr$. The intersection pairing $M \colon H_\rr \otimes H_\rr \to \rr$ induces a non-degenerate bilinear alternating pairing on $\hh$, which can be written explicitly as
\[ \alpha \otimes \beta \mapsto \int_C \alpha \wedge \beta  \]
for harmonic $1$-forms $\alpha,\beta$ on $C$.

 The jacobian of $C$ is defined to be the real torus $J=H_\rr/H_\zz$. Let $k \in \zz$ be an integer. We write $M_k$ for the canonically induced bilinear alternating map $\largewedge^{2k+1} H_\rr \otimes \largewedge^{2k+1} H_\rr \to \rr$; explicitly $M_k$ is given by
\[ M_k(a_1 \wedge \ldots \wedge a_{2k+1} \otimes b_1 \wedge \ldots \wedge b_{2k+1}) = \det(M(a_i \otimes b_j))  \]
for $a_1,\ldots,a_{2k+1},b_1,\ldots,b_{2k+1} \in H_\rr$.
Note that we have $M=M_0$. Let $J_k = \largewedge^{2k+1} H_\rr / \largewedge^{2k+1} H_\zz$ for $k \in \zz$ be the ``higher'' intermediate jacobians. Then for each $k \in \zz$ we define $\omega_k \in A^2(J_k)$ to be the canonical translation-invariant $2$-form corresponding to $M_k$.

Let $(\ell_1,\ldots,\ell_{2g})$ be a symplectic basis of $H_\rr$, that is
\[ M(\ell_i \otimes \ell_{g+j}) = \delta_{ij} \, , \quad M(\ell_i \otimes \ell_j) = M(\ell_{g+i} \otimes \ell_{g+j}) =0 \, , \quad 1 \leq i,j \leq g \, . \]
Let $(\lambda_1,\ldots,\lambda_{2g})$ be the dual basis of $H_\rr^*=\mathrm{Hom}_\rr(H_\rr,\rr)$. It is useful to have an expression for $\omega_k$ in terms of this basis.
\begin{prop}  \label{pre-explicit}
We have the identity
\[ 2\, \omega_k = \sum_{1 \leq i_1 < \ldots < i_{2k+1} \leq 2g} \lambda_{i_1} \wedge\ldots \wedge \lambda_{i_{2k+1}} \wedge \lambda_{g+i_1} \wedge \ldots \wedge \lambda_{g+i_{2k+1}} \]
in $A^2(J_k)$. Here we put $\lambda_{2g+j}=-\lambda_j$ for $j=1,\ldots,g$.
\end{prop}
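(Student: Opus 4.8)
The plan is to reduce the asserted equality to a combinatorial identity about the standard symplectic matrix, and then to carry out a sign count. For an increasingly ordered subset $I=\{i_1<\dots<i_{2k+1}\}$ of $\{1,\dots,2g\}$ write $\ell_I:=\ell_{i_1}\wedge\dots\wedge\ell_{i_{2k+1}}$; these form a basis of $\largewedge^{2k+1}H_\rr$. Let $\lambda_I:=\lambda_{i_1}\wedge\dots\wedge\lambda_{i_{2k+1}}$, regarded via the standard pairing $(\lambda_{j_1}\wedge\dots\wedge\lambda_{j_{2k+1}})(\ell_{i_1}\wedge\dots\wedge\ell_{i_{2k+1}})=\det(\lambda_{j_a}(\ell_{i_b}))$ as the element of $(\largewedge^{2k+1}H_\rr)^*$, so that $\{\lambda_I\}_I$ is the basis dual to $\{\ell_I\}_I$ and each $\lambda_I$ is a translation-invariant $1$-form on $J_k$. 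Also put $\mu_I:=\lambda_{g+i_1}\wedge\dots\wedge\lambda_{g+i_{2k+1}}$, with the convention $\lambda_{2g+j}=-\lambda_j$; then the right-hand side of the proposition is $\sum_I\lambda_I\wedge\mu_I$. Since $\omega_k$ is by definition the translation-invariant $2$-form on $J_k$ whose value is the alternating form $M_k$ on $\largewedge^{2k+1}H_\rr$, it suffices to prove $2M_k=\sum_I\lambda_I\wedge\mu_I$.

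First I would record the coefficients of $M_k$ in the basis $\{\lambda_I\wedge\lambda_J\}$. By definition $M_k(\ell_I,\ell_J)=\det\big(M(\ell_{i_a},\ell_{j_b})\big)$, and the Gram matrix $\big(M(\ell_p,\ell_q)\big)_{p,q}$ is the standard symplectic matrix: writing $\bar p:=p+g$ for $p\le g$ and $\bar p:=p-g$ for $p>g$, and $\epsilon(p):=M(\ell_p,\ell_{\bar p})$ (so $\epsilon(p)=+1$ for $p\le g$ and $\epsilon(p)=-1$ for $p>g$), we have $M(\ell_p,\ell_q)=\epsilon(p)$ if $q=\bar p$ and $M(\ell_p,\ell_q)=0$ otherwise. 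For a subset $I$ set $\bar I:=\{\bar i_1,\dots,\bar i_{2k+1}\}$; because $2k+1$ is odd, $I$ contains no symplectic pair $\{p,\bar p\}$, so $\bar I$ again has $2k+1$ elements and $\bar I\ne I$. The matrix $\big(M(\ell_{i_a},\ell_{(\bar I)_b})\big)$ then has exactly one nonzero entry in each row and column — namely $\epsilon(i_a)$ in the column equal to the position of $\bar i_a$ in the increasing enumeration of $\bar I$ — so it is a generalized permutation matrix. Hence $M_k(\ell_I,\ell_J)=0$ unless $J=\bar I$, while $M_k(\ell_I,\ell_{\bar I})=\mathrm{sgn}(\rho_I)\prod_a\epsilon(i_a)$, where $\rho_I$ is the permutation of $\{1,\dots,2k+1\}$ sorting the tuple $(\bar i_1,\dots,\bar i_{2k+1})$ into increasing order. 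Writing $M_k$ with respect to the dual basis, each unordered pair $\{I,\bar I\}$ contributes one term, so summing over all $I$ doubles $M_k$ (the orderings $(I,\bar I)$ and $(\bar I,I)$ give the same term, by alternation of $M_k$): $2M_k=\sum_I\mathrm{sgn}(\rho_I)\big(\prod_a\epsilon(i_a)\big)\,\lambda_I\wedge\lambda_{\bar I}$, the sum over all increasing $(2k+1)$-subsets $I$.

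It then remains to identify the $I$-th summand of the right-hand side. Applying $\lambda_{2g+j}=-\lambda_j$ factorwise, $\lambda_{g+i_a}=\lambda_{\bar i_a}$ when $i_a\le g$ and $\lambda_{g+i_a}=\lambda_{2g+(i_a-g)}=-\lambda_{\bar i_a}$ when $i_a>g$; in both cases $\lambda_{g+i_a}=\epsilon(i_a)\lambda_{\bar i_a}$. Therefore $\mu_I=\big(\prod_a\epsilon(i_a)\big)\,\lambda_{\bar i_1}\wedge\dots\wedge\lambda_{\bar i_{2k+1}}=\big(\prod_a\epsilon(i_a)\big)\,\mathrm{sgn}(\rho_I)\,\lambda_{\bar I}$, the last equality being the reordering of the factors into increasing index order. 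Consequently $\lambda_I\wedge\mu_I=\mathrm{sgn}(\rho_I)\big(\prod_a\epsilon(i_a)\big)\,\lambda_I\wedge\lambda_{\bar I}=M_k(\ell_I,\ell_{\bar I})\,\lambda_I\wedge\lambda_{\bar I}$, and summing over all $I$ and comparing with the formula of the previous paragraph gives $\sum_I\lambda_I\wedge\mu_I=2M_k=2\omega_k$, as wanted.

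The passages between a bilinear form and its matrix, and the various reindexings, are routine; the one point needing genuine care is the sign computation in the second step — that the determinant of the generalized permutation submatrix of the symplectic matrix cut out by $I$ and $\bar I$ equals $\mathrm{sgn}(\rho_I)\prod_a\epsilon(i_a)$ — together with the verification that this is exactly the sign produced on the other side by the substitution $\lambda_{2g+j}=-\lambda_j$ and the reordering. As a sanity check one can carry the whole identity out by hand in the first nontrivial case $g=2$, $k=1$, where $J_1$ is a real $4$-torus and both sides equal $-2\,\lambda_{\{1,2,3\}}\wedge\lambda_{\{1,3,4\}}-2\,\lambda_{\{1,2,4\}}\wedge\lambda_{\{2,3,4\}}$.
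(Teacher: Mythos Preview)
Your argument is essentially correct and is precisely the ``straightforward verification'' that the paper leaves to the reader; the paper gives no further details, so there is nothing to compare beyond this.

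There is, however, one mis-statement in your write-up. You claim that ``because $2k+1$ is odd, $I$ contains no symplectic pair $\{p,\bar p\}$''. This is false in general: for instance with $g=2$, $k=1$, the set $I=\{1,2,3\}$ contains the pair $\{1,3\}=\{1,\bar 1\}$. Fortunately the conclusions you draw from it are nonetheless true: the map $i\mapsto\bar i$ is a bijection on $\{1,\dots,2g\}$, so $|\bar I|=|I|=2k+1$ holds unconditionally; and $\bar I\ne I$ follows because the involution $i\mapsto\bar i$ has no fixed points, so any $I$ with $\bar I=I$ would have to be a union of symplectic pairs and hence of even cardinality. This is presumably what you intended to say. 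The remainder of the argument --- the identification $M_k(\ell_I,\ell_J)=0$ unless $J=\bar I$, the sign computation $M_k(\ell_I,\ell_{\bar I})=\mathrm{sgn}(\rho_I)\prod_a\epsilon(i_a)$, and the matching with $\lambda_I\wedge\mu_I$ via the substitution $\lambda_{g+i_a}=\epsilon(i_a)\lambda_{\bar i_a}$ --- is correct, and your sanity check for $g=2$, $k=1$ confirms it.
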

\begin{proof} This is a straightforward verification.
\end{proof}
Next, let $\largewedge^2 \largewedge^{2k+1} H_\rr^* \to \largewedge^{4k+2} ( H_\rr^* \oplus H_\rr^*)$ be the natural map given by sending $\alpha \mapsto \alpha^
\natural$ as in Section \ref{intro}. Note that $H_\rr \oplus H_\rr$  uniformizes the selfproduct $J \times J$. We can thus naturally interpret $\omega_k^\natural$ as an element of $A^{4k+2}(J \times J)$.
Written out explicitly, by Proposition \ref{pre-explicit} we have
\[ 2 \, \omega_k^\natural = \sum_{1 \leq i_1 < \ldots < i_{2k+1} \leq 2g} \lambda_{i_1}(u) \wedge\ldots \wedge \lambda_{i_{2k+1}}(u) \wedge \lambda_{g+i_1}(v) \wedge \ldots \wedge \lambda_{g+i_{2k+1}}(v) \]
for $(u,v) \in J \times J$. In particular, the restriction of $\omega_k^\natural$ to the diagonal gives back $\omega_k$.

Note that we have a Hodge filtration
\[ H_\CC = F^{-1}H_\CC \supset F^0H_\CC \supset F^1H_\CC=0   \]
dual to the one in (\ref{hodge}).
The inclusion $H_\rr \to H_\CC$ induces a natural isomorphism
\begin{equation} \label{natural} H_\rr/H_\zz \isom H_\CC / (F^0 H_\CC + H_\zz) 
\end{equation}
of real tori. The projection onto $H^{-1,0}$ induces an isomorphism $H_\CC/F^0 H_\CC \isom H^{-1,0} = (H^{1,0})^*$. Hence by the natural isomorphism (\ref{natural}) the jacobian $J$ of $C$ can be naturally viewed as a complex torus, whose tangent space at the origin is naturally identified with $H^{-1,0}=(H^{1,0})^*$. More precisely we have $J = (H^{1,0})^*/H_\zz$ as complex tori, where $H_\zz$ is embedded in $(H^{1,0})^*$ by associating to the class of a $1$-cycle $\gamma$ on $C$ the linear functional given by $\int_\gamma \cdot$ on $H^{1,0}$.

A similar description can be carried through for the higher jacobians $J_k$. Using the Hodge filtration on $\largewedge^{2k+1} H_\CC \simeq H_{2k+1}(J,\CC)$ one has a natural identification of $J_k$ with the complex torus
$ ( \bigoplus_{p+q=2k+1  , \, p \geq q} H^{p,q} )^*/H_{2k+1}(J,\zz)$,
where $H^{p,q}$ is the space of harmonic $(p,q)$-forms on $J$. Here the homology group $H_{2k+1}(J,\zz)$ is embedded into $ ( \bigoplus_{p+q=2k+1  , \, p \geq q} H^{p,q} )^* $ by associating to the class of a $(2k+1)$-cycle $Z$ in $J$ the functional given by $\int_Z \cdot$ on $\bigoplus_{p+q=2k+1  , \, p \geq q} H^{p,q}$.

To finish this section we recall a construction due to P. Griffiths \cite{gr} that, restricted to our situation, associates to a homologically trivial cycle $\Gamma$ of dimension $2k$ in $J$ a canonical class in the $k$-th higher jacobian $J_k$. First we write the cycle $\Gamma$ as the boundary $\partial \tilde{\Gamma}$ of a $(2k+1)$-cycle $\tilde{\Gamma}$ in $J$. Then we associate to $\Gamma$ the class $\gamma$ in $J_k$  given by the functional $\int_{\tilde{\Gamma}} \cdot$ on $\bigoplus_{p+q=2k+1  , \, p \geq q} H^{p,q}$. Note that $\gamma$ is well-defined: replacing $\tilde{\Gamma}$ by another $(2k+1)$-cycle $\Gamma'$ such that $\partial \Gamma'= \Gamma$, the functional changes by $\int_{\tilde{\Gamma} - \Gamma'} \cdot$, which has zero class in $J_k$, as $\tilde{\Gamma} - \Gamma'$ yields an element of $H_{2k+1}(J,\zz)$.

Griffiths's construction generalizes the classical Abel-Jacobi map (where $k=0$) which assigns to a degree-zero divisor $D$ on a compact Riemann surface $C$ the class of the functional $\int_{\tilde{D}} \cdot$ in $J$, where $\tilde{D}$ is a $1$-cycle in $C$ with $\partial \tilde{D} = D$. Of particular interest for us will be the class $\gamma$ in $J_1$ associated by Griffiths's construction to the so-called Ceresa cycle \cite{ce} on the pointed Riemann surface $(C,x)$. This cycle is given as follows: let $C_x$ be the copy of $C$ embedded in $J$ given by all Abel-Jacobi images of  degree-zero divisors $y-x$ on $C$, where $y$ runs through $C$, and let $C_x^-$ be the image of $C_x$ under the inversion $[-1]$ on $J$. Then the Ceresa cycle of $(C,x)$ is defined to be the cycle $C_x-C_x^-$ in $J$. Since inversion acts by $+1$ on $H_2(J,\zz)$, we see that the Ceresa cycle is homologically trivial in $J$.  

\section{Torus bundles and sections} \label{prelims}

Let $B$ be a complex manifold, and let $q \colon \tt \to B$ be a real torus bundle. Let $\hh_\zz$ be the associated local system of first homology groups. That is, the fiber of $\hh_\zz$ at $b \in B$ is the free abelian group $H_1(\tt_b,\zz)$. Write $\hh_\rr = \hh_\zz \otimes \rr$, and assume that a zero section of $q$ has been given. Then there is a natural identification $\tt = \hh_\rr/\hh_\zz$.

Let $M \colon \hh_\rr \otimes \hh_\rr \to \rr$ be a bilinear alternating pairing. As for each $b \in B$ the vector space $\hh_{\rr,b}$ is identified with the relative tangent space to $\tt_b$ at the origin, the form $M$ naturally gives rise to a parallel $2$-form $\omega$ on $\tt$. It is clear that the pullback of $\omega$ along the zero section of $\tt \to B$ is trivial, and that the restriction of $\omega$ to any of the fibers of $\tt \to B$ is translation-invariant.

Let $s \in A^0(B,\tt)$ be a $C^\infty$ section of $\tt \to B$. Then associated to $s$ we have its first variation $\d s \in A^1(B,\hh_\rr)$.  It can be easily seen that $ds$ is closed and that its cohomology class gives rise to an element of $H^1(B,\hh_\zz)$ \cite[Section~4.1]{hain_normal}. Further, the pairing $M$ gives rise to a contraction map $M \colon A^2(B,(\hh_\rr)^{\otimes 2}) \to A^2(B)$.  A straightforward computation yields the identity
\begin{equation} \label{rewrite} s^* \omega = M(\d s)^{\otimes 2}
\end{equation}
of $2$-forms on $B$.

Now let $p \colon \cc \to B$ be a family of compact Riemann surfaces of genus $g$, with associated local system $\hh_\zz$ of first homology groups on $B$. Then $\hh_\zz$ is a variation of Hodge structure of weight $-1$ on $B$ with Hodge filtration
\[ \hh_\CC = \ff^{-1}\hh_\CC \supset \ff^0 \hh_\CC \supset \ff^1 \hh_\CC=0 \, .  \]
By our remarks in Section \ref{zero} the jacobian bundle $q \colon \jj = \hh_\rr/\hh_\zz \to B$ can be naturally viewed as a bundle of complex tori. 

Let $M \colon \hh_\rr \otimes \hh_\rr \to \rr$ be the nondegenerate bilinear alternating pairing derived from the intersection form, and for each $k \in \zz$ write $M_k$ for the canonically induced bilinear alternating pairing $\largewedge^{2k+1} \hh_\rr \otimes \largewedge^{2k+1} \hh_\rr \to \rr$. Let $\jj_k = \largewedge^{2k+1} \hh_\rr / \largewedge^{2k+1} \hh_\zz$ for $k \in \zz$ be the higher jacobian  bundles over $B$. Then we define $\omega_k \in A^2(\jj_k)$ for $k \in \zz$ to be the parallel $2$-form corresponding to $M_k$. 

Let $s \in A^0(B,\jj_k)$ be a $C^\infty$ section. Then by equation (\ref{rewrite}) we can write
\begin{equation} \label{section*} s^* \omega_k = M_k (\d s)^{\otimes 2} 
\end{equation}
in $A^2(B)$. In particular we can write our three main $2$-forms of interest as:
\[ 2\,\kappa^*\omega_0=2\,M_0(\d \kappa)^{\otimes 2} \, , \quad 12 \,I^*\omega_1=12\,M_1(\d I)^{\otimes 2} \, , \quad  2\,\delta^*\omega_0=2\,M_0(\d \delta)^{\otimes 2} \, . \]
Here, by a slight abuse of notation we view $\delta$ as a section of the universal family of jacobians over $\cc_g^2$. Working locally over $B$, let $(\ell_1,\ldots,\ell_{2g})$ be a symplectic frame of $\hh_\rr$, that is
\[ M(\ell_i \otimes \ell_{g+j}) = \delta_{ij} \, , \quad M(\ell_i \otimes \ell_j) = M(\ell_{g+i} \otimes \ell_{g+j}) =0 \, , \quad 1 \leq i,j \leq g \, . \]
Let $(\lambda_1,\ldots,\lambda_{2g})$ be the dual frame of $\hh_\rr^*$. Then Proposition \ref{pre-explicit} immediately translates into the following.
\begin{prop} \label{explicit}
Locally over $B$, the equality
\[ 2\,\omega_k = \sum_{1 \leq i_1 < \ldots < i_{2k+1} \leq 2g} \lambda_{i_1} \wedge\ldots \wedge \lambda_{i_{2k+1}} \wedge \lambda_{g+i_1} \wedge \ldots \wedge \lambda_{g+i_{2k+1}} \]
holds in $A^2(\jj_k)$. Here we put $\lambda_{2g+j}=-\lambda_j$ for $j=1,\ldots,g$.
\end{prop}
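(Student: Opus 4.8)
The plan is to deduce Proposition~\ref{explicit} from its fiberwise counterpart, Proposition~\ref{pre-explicit}, by passing to a local flat trivialization of $\jj_k$ over $B$ in which both sides of the asserted identity are pulled back from a single fixed torus.

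The structural input is that the intersection pairing $M$ on $\hh_\rr$ is parallel for the Gauss--Manin connection, being the flat polarization of the weight $-1$ variation of Hodge structure $\hh_\zz$; hence so is the induced pairing $M_k$ on $\largewedge^{2k+1}\hh_\rr$. Fix a contractible open $U \subseteq B$. Since $\hh_\zz|_U$ is a trivial local system, one can choose a flat frame $(\ell_1,\dots,\ell_{2g})$ of $\hh_\rr|_U$ that is symplectic at one point of $U$, hence --- by parallelness of $M$ --- symplectic at every point. Its ordered $(2k+1)$-fold exterior products $\ell_{i_1}\wedge\cdots\wedge\ell_{i_{2k+1}}$ ($i_1<\cdots<i_{2k+1}$) then form a flat frame of $\largewedge^{2k+1}\hh_\rr|_U$, inducing an isomorphism $\jj_k|_U \isom U \times T$ of torus bundles with flat connection, where $T=\largewedge^{2k+1}\rr^{2g}/\largewedge^{2k+1}\zz^{2g}$ carries its standard structure. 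Under this isomorphism the $1$-forms $\lambda_{i_1}\wedge\cdots\wedge\lambda_{i_{2k+1}}$ on $\jj_k|_U$ are the pullbacks along the second projection of the corresponding coordinate $1$-forms on $T$, and --- since $\omega_k$ is by construction the parallel $2$-form attached to $M_k$, and $M_k$ has constant (standard) matrix in the chosen frame --- $\omega_k$ is the pullback along the second projection of the translation-invariant $2$-form on $T$ determined by the standard $M_k$.

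Granting this, the asserted identity of $2$-forms on $\jj_k|_U$ becomes verbatim the identity of translation-invariant $2$-forms on $T$ of Proposition~\ref{pre-explicit}, applied to $\rr^{2g}$ with its standard symplectic form and standard dual bases; since $B$ is covered by such $U$ and the two sides are globally defined $2$-forms agreeing on each $\jj_k|_U$, they agree on $\jj_k$. There is thus no genuine obstacle beyond this reduction --- Proposition~\ref{explicit} is indeed a direct translation of Proposition~\ref{pre-explicit}, as remarked in the text. The only point that repays a little care is the unwinding of ``the parallel $2$-form corresponding to $M_k$'' in the flat trivialization, together with the normalization conventions relating $\omega_k$, the pairing between $\largewedge^{2k+1}\hh_\rr$ and its dual, and the $1$-forms $\lambda_{i_1}\wedge\cdots\wedge\lambda_{i_{2k+1}}$, so that the numerical factor ``$2\,\omega_k$'' appears as stated; but that bookkeeping is already carried out in the fiberwise case.
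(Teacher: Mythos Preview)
Your argument is correct and matches the paper's approach exactly: the paper simply remarks that Proposition~\ref{pre-explicit} ``immediately translates'' into Proposition~\ref{explicit}, and your proposal spells out precisely this translation via a local flat symplectic frame of $\hh_\rr$ and the resulting trivialization of $\jj_k$. There is nothing to add.
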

Note that in particular $\omega_k$ can be interpreted as an element of $A^0(B,\largewedge^2 \largewedge^{2k+1} \hh_\rr^*)$. Let $\largewedge^2 \largewedge^{2k+1} \hh_\rr^* \to \largewedge^{4k+2} ( \hh_\rr^* \oplus \hh_\rr^*)$ be the natural map induced by sending $\alpha \mapsto \alpha^
\natural$ as in Section \ref{intro}. As the local system $\hh_\rr \oplus \hh_\rr$  uniformizes the selfproduct of jacobian bundles $Q \colon \jj \times_B \jj \to B$ we can naturally interpret each $\omega_k^\natural$ as an element of $A^{4k+2}(\jj \times_B \jj)$. Written out explicitly by Proposition \ref{explicit} we have
\[ 2\,\omega_k^\natural = \sum_{1 \leq i_1 < \ldots < i_{2k+1} \leq 2g} \lambda_{i_1}(u) \wedge\ldots \wedge \lambda_{i_{2k+1}}(u) \wedge \lambda_{g+i_1}(v) \wedge \ldots \wedge \lambda_{g+i_{2k+1}}(v) \]
for $(u,v) \in \jj \times_B \jj$, locally over $B$. In particular, the restriction of $\omega_k^\natural$ to the diagonal gives the $2$-form $\omega_k$.

To finish this section, we discuss a useful differential property of the Griffiths Abel-Jacobi construction in the family of complex tori $q \colon \jj \to B$. Let $k \in \zz$ and let $\Gamma$ be a topological cycle in $\jj$ whose restriction to each fiber of $q \colon \jj \to B$ is homologically trivial of dimension $2k$. Note that for each $b \in B$ there exists an open neighborhood $U$ of $b$ in $B$ and a topological cycle $\tilde{\Gamma}$ in $\jj|_{\overline{U}}$ such that $\Gamma|_{\overline{U}} = \partial \tilde{\Gamma}$. Hence, applying fiberwise the Griffiths Abel-Jacobi construction from Section \ref{zero} we obtain a natural section $\gamma \in A^0(B,\jj_k)$ associated to $\Gamma$. The following proposition describes the canonical element of $A^2(B)$ induced by $\gamma$ and $M$ as a fiber integral.
\begin{prop} \label{fiberint} Let $\Gamma$ be  a topological cycle in $\jj$ whose restriction to each fiber of $q \colon \jj \to B$ is homologically trivial of dimension $2k$.
Let $\gamma \in A^0(B,\jj_k)$ be the section obtained by fiberwise applying Griffiths's Abel-Jacobi construction to $\Gamma$. Let $Q \colon \jj \times_B \jj \to B$ be the projection map. Then the equalities
\[ \gamma^*(\omega_k)=M_k(\d \gamma)^{\otimes 2} = \int_Q \omega_k^
\natural \wedge \delta_{\Gamma \times_B \Gamma} \]
hold in $A^2(B)$.
\end{prop}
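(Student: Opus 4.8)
The plan is to reduce to the second equality, since the first, $\gamma^*\omega_k=M_k(\d\gamma)^{\otimes 2}$, is just (\ref{section*}); and the second equality may be verified locally over $B$. So I would fix $b\in B$ and a contractible neighbourhood $U$ of $b$ over which we are given a topological chain $\tilde\Gamma$ in $\jj|_{\overline U}$ with $\partial\tilde\Gamma=\Gamma|_{\overline U}$, choose a symplectic frame $(\ell_1,\dots,\ell_{2g})$ of $\hh_\rr$ with dual frame $(\lambda_1,\dots,\lambda_{2g})$, and for an increasing multi-index $I=(i_1<\cdots<i_{2k+1})$ abbreviate $\lambda_I=\lambda_{i_1}\wedge\cdots\wedge\lambda_{i_{2k+1}}$ and $\lambda_{g+I}=\lambda_{g+i_1}\wedge\cdots\wedge\lambda_{g+i_{2k+1}}$, with the convention $\lambda_{2g+j}=-\lambda_j$. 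By Proposition~\ref{explicit} both $2\,\omega_k=\sum_I\lambda_I\wedge\lambda_{g+I}$ and $2\,\omega_k^\natural=\sum_I\mathrm{pr}_1^*\lambda_I\wedge\mathrm{pr}_2^*\lambda_{g+I}$ are assembled from the same pieces, where $\mathrm{pr}_1,\mathrm{pr}_2\colon\jj\times_B\jj\to\jj$ are the projections. Via the Gauss--Manin flat structure each flat section $\lambda_I$ of $\largewedge^{2k+1}\hh_\rr^*$ over $U$ determines a closed $(2k+1)$-form $\underline{\lambda_I}$ on $\jj|_U$ restricting to the corresponding parallel form on each fibre of $q\colon\jj\to B$, and by the very construction of the Griffiths section the components of $\gamma$, hence of $\d\gamma\in A^1(U,\largewedge^{2k+1}\hh_\rr)$, are governed — up to the universal normalization fixed in Section~\ref{intro} — by the period integrals $\gamma_I(u)=\int_{\tilde\Gamma_u}\underline{\lambda_I}$.

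The heart of the proof is the first-variation formula
\[ \d\gamma_I=\epsilon\int_q\bigl(\underline{\lambda_I}\wedge\delta_\Gamma\bigr) \]
for a sign $\epsilon\in\{\pm1\}$ independent of $I$, where $\int_q$ denotes fibre integration of currents along $q$; restricted to its support this is the honest smooth fibre integral of $\underline{\lambda_I}|_\Gamma$ over the fibres of $\Gamma\to B$. I would prove this by writing, locally, $\gamma_I=\int_q(\underline{\lambda_I}\wedge\delta_{\tilde\Gamma})$, applying $\d$, commuting it past $\int_q$ (the fibres of $q$ being compact and boundaryless), and then using $\d\,\underline{\lambda_I}=0$ together with the Stokes relation $\d\,\delta_{\tilde\Gamma}=\pm\,\delta_{\partial\tilde\Gamma}=\pm\,\delta_\Gamma$ for currents. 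Since $\gamma$ is $C^\infty$, this also shows that $\int_q(\underline{\lambda_I}\wedge\delta_\Gamma)$ is a smooth $1$-form, so that the right-hand side of the proposition is a genuine smooth $2$-form.

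Granting the lemma I would evaluate the right-hand side of the proposition outright. First, $\delta_{\Gamma\times_B\Gamma}=\mathrm{pr}_1^*\delta_\Gamma\wedge\mathrm{pr}_2^*\delta_\Gamma$ — the product of currents being legitimate because the conormal directions of $\Gamma\times_B\jj$ and of $\jj\times_B\Gamma$ inside $\jj\times_B\jj$ are transverse — and reshuffling factors (the intervening sign $(-1)^{(2k+1)(2g-2k)}$ is $+1$) gives $2\,\omega_k^\natural\wedge\delta_{\Gamma\times_B\Gamma}=\sum_I\mathrm{pr}_1^*(\underline{\lambda_I}\wedge\delta_\Gamma)\wedge\mathrm{pr}_2^*(\underline{\lambda_{g+I}}\wedge\delta_\Gamma)$. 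Since $Q=q\circ\mathrm{pr}_2$ fits in the Cartesian square formed by $\mathrm{pr}_1,\mathrm{pr}_2$ and two copies of $q$, I would use $\int_Q=\int_q\circ\int_{\mathrm{pr}_2}$, the base-change identity $\int_{\mathrm{pr}_2}\mathrm{pr}_1^*=q^*\int_q$, and the projection formula twice to reduce $\int_Q\omega_k^\natural\wedge\delta_{\Gamma\times_B\Gamma}$ to $\tfrac12\sum_I\bigl(\int_q\underline{\lambda_I}\wedge\delta_\Gamma\bigr)\wedge\bigl(\int_q\underline{\lambda_{g+I}}\wedge\delta_\Gamma\bigr)$. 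By the first-variation formula this is $\tfrac12\epsilon^2\sum_I\d\gamma_I\wedge\d\gamma_{g+I}$, and one checks — using Proposition~\ref{explicit} once more, now reading off the pairing $M_k$ rather than the form $\omega_k$ from $\sum_I\lambda_I\wedge\lambda_{g+I}$, together with the conventions of Section~\ref{intro} — that this equals $M_k(\d\gamma)^{\otimes 2}$. The sign $\epsilon$ drops out because it occurs squared, and the remaining signs and normalizations match because $\omega_k,\omega_k^\natural$ and $M_k$ are all controlled by the single expression $\sum_I\lambda_I\wedge\lambda_{g+I}$.

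I expect the first-variation formula to be the main obstacle: one must make precise that the Griffiths section, defined fibrewise by integrating harmonic forms against the moving chain $\tilde\Gamma_u$, has first derivative equal to the fibre integral of those forms over the boundary cycle $\Gamma$. This is precisely where the relation $\partial\tilde\Gamma=\Gamma$ enters — through $\d\,\delta_{\tilde\Gamma}=\pm\,\delta_\Gamma$ — and it requires some care that fibre integration of currents commutes with $\d$ in a family with compact boundaryless fibres. By contrast, the remaining ingredients — the factorization $\delta_{\Gamma\times_B\Gamma}=\mathrm{pr}_1^*\delta_\Gamma\wedge\mathrm{pr}_2^*\delta_\Gamma$, the base-change and projection formulas on $\jj\times_B\jj$, and the bookkeeping of orientation signs — are routine.
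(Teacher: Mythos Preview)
Your proof is correct and follows essentially the same route as the paper: the first equality is dismissed via (\ref{section*}), and the second is obtained from the Stokes-type first-variation formula $\alpha(\d\gamma)=\d\int_q\alpha\wedge\delta_{\tilde\Gamma}=-\int_q\alpha\wedge\delta_\Gamma$ for parallel $(2k+1)$-forms $\alpha$, then multiplied against itself to produce $\int_Q\alpha(u)\wedge\alpha'(v)\wedge\delta_{\Gamma\times_B\Gamma}$. The only difference is cosmetic: the paper works coordinate-free with a general $\alpha\in A^0(B,\largewedge^{2k+1}\hh_\rr^*)$, while you expand in the symplectic frame $\lambda_I$ and track the bookkeeping (transversality of currents, base-change, the squared sign $\epsilon^2$) more explicitly.
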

\begin{proof} The first equality is just equation (\ref{section*}). To see the second equality, let $\alpha$ be any element of $A^0(B,\largewedge^{2k+1} \hh_\rr^*)$. Note that we can interpret $\alpha$ as a parallel $(2k+1)$-form on $\jj$. By contraction we find an element $\alpha(d\gamma) \in A^1(B)$.  Working locally over $B$, let $\tilde{\Gamma}$ be a cycle on $\jj$ such that $\partial \tilde{\Gamma} = \Gamma$. An application of Stokes's theorem then yields that
\[ \alpha(\d \gamma) = \d \int_q \alpha \wedge \delta_{\tilde{\Gamma}} = -\int_q \alpha \wedge \delta_{\Gamma}  \]
in $A^1(B)$, as $\alpha$ is closed. It follows that for $\alpha, \alpha' 
\in A^0(B,\largewedge^{2k+1} \hh_\rr^*)$ we have
\[ (\alpha \wedge \alpha')(d\gamma)^{\otimes 2} 
= \left(- \int_q \alpha \wedge \delta_{\Gamma} \right) \wedge \left(-\int_q \alpha' \wedge \delta_{\Gamma} \right) = \int_Q \alpha(u) \wedge \alpha'(v) \wedge \delta_{\Gamma \times_B \Gamma} \]
in $A^2(B)$. The proposition follows.
\end{proof}

\section{Arakelov volume form and the invariant $a_g$} \label{volume}

Let $g \geq 1$ be an integer. Let $C$ be a compact and connected Riemann surface of genus $g$. The purpose of this section is to define the Arakelov volume form $\mu$ and the conformal invariant $a_g(C)$ of $C$ introduced by Kawazumi. Our exposition here is based on the discussion in \cite[Section~1]{kawpr}.

As before let $H_\CC=H_1(C,\zz)$ and let $\hh \simeq \mathrm{Hom}_\CC(H_\CC,\CC)$ be the complex vector space of harmonic $1$-forms on $C$ with its Hodge decomposition
\[ \hh \isom H^{1,0} \oplus H^{0,1} = H^{1,0} \oplus \overline{H^{1,0}} \, . \]
Let $(\psi_1,\ldots,\psi_g)$ be an orthonormal basis of $H^{1,0}$, that is,
\[ \frac{\sqrt{-1}}{2} \int_C \psi_i \wedge \bar{\psi}_j = \delta_{ij} \, , \quad 1 \leq i,j \leq g \, . \]
Then $(\psi_1,\ldots,\psi_g,\frac{\sqrt{-1}}{2}\bar{\psi}_1,\ldots,\frac{\sqrt{-1}}{2}\bar{\psi}_g)$ is a symplectic basis of $\hh$ and it follows from Proposition \ref{explicit} that the form induced by $M$ in $ \largewedge^2 \hh$ can be written explicitly as
\[ M=\frac{\sqrt{-1}}{2} \sum_{i=1}^g \psi_i \wedge \bar{\psi_i} \, . \]
Let $H^*_\CC \to A^1(C)$ be the map sending a linear form to the harmonic $1$-form that corresponds to it. It is convenient to see this map as an $H_\CC$-valued real harmonic $1$-form $\omega_{(1)} \in A^1(C) \otimes H_\CC$. Letting $(Y_1,\ldots,Y_g,Y_{g+1},\ldots,Y_{2g})$ be the dual basis of $(\psi_1,\ldots,\psi_g,\frac{\sqrt{-1}}{2}\bar{\psi}_1,\ldots,\frac{\sqrt{-1}}{2}\bar{\psi}_g)$ in $H_\CC$ we can then write explicitly
\[ \omega_{(1)} = \sum_{i=1}^g \psi_i Y_i + \overline{ \psi_i Y_i } \, . \]
Regarding the $\psi_i$ as holomorphic $1$-forms on $C$, this naturally leads to considering the $2$-form
\[ \mu = \frac{1}{2g} M(\omega_{(1)} \wedge \omega_{(1)}) = \frac{\sqrt{-1}}{2g} \sum_{i=1}^g \psi_i \wedge \bar{\psi}_i \]
on $C$, where we note that $\int_C \mu =1$. By the Riemann-Roch theorem the canonical linear system has no base points, and hence the $2$-form $\mu$ is a volume form on $C$. We call $\mu$ the Arakelov volume form.

Let $D^q(C)$ for $q \in \zz$ be the space of complex valued $q$-currents on $C$. In order to introduce the invariant $a_g(C)$ of $C$, we recall the Green operator $\Phi \colon D^2(C) \to D^0(C)$ with respect to the Arakelov volume form $\mu$. It is defined uniquely by the following two properties:
\begin{equation} \label{Phi} d * d \, \Phi(\Omega) = \Omega - \left( \int_C \Omega \right) \mu \, , \quad \int_C \mu \, \Phi(\Omega)  = 0
\end{equation}
for all $\Omega \in D^2(C)$. If $\Omega$ is a smooth form, then $\Phi(\Omega)$ is a smooth function. Moreover, we have the symmetry relation
\begin{equation} \label{symmetry} \int_C \Phi(\Omega') \, \Omega = \int_C \Omega'\,\Phi(\Omega)
\end{equation}
for all $\Omega, \Omega'\in D^2(C)$.
\begin{defini} (cf. \cite[Section~8]{kawhb}, \cite[Section~1]{kawpr}) Let $(\psi_1,\ldots,\psi_g)$ be an orthonormal basis of the space $H^{1,0}$ of holomorphic $1$-forms on $C$. The invariant $a_g(C)$ of $C$ is defined to be the element
\[ \begin{split}
a_g(C) & = (M \otimes M) \int_C \omega_{(1)} \, \Phi(\omega_{(1)} \wedge \omega_{(1)}) \, \omega_{(1)} \\
 & = - \sum_{i,j=1}^g \int_C \left( \psi_i \wedge \bar{\psi}_j \Phi(\bar{\psi}_i \wedge \psi_j) + \bar{\psi}_i \wedge \psi_j \Phi(\psi_i \wedge \bar{\psi}_j) \right) \end{split} \]
of $\rr$.  
\end{defini}
We have the following positivity result for $a_g(C)$.
\begin{prop} (cf. \cite[Corollary 1.2]{kawpr}) We have $a_g(C) \geq 0$, and $a_g(C)=0$ if and only if $g=1$.
\end{prop}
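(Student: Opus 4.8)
\emph{Proof sketch.} The plan is to rewrite $a_g(C)$ as a manifestly nonnegative sum of Dirichlet-type energies, and then to analyze the vanishing locus directly.

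First I would set $\eta_{ij}=\psi_i\wedge\bar\psi_j\in A^2(C)$. Since the $\psi_i$ are smooth, each $f_{ij}:=\Phi(\eta_{ij})$ is a smooth function. Conjugating the two defining properties (\ref{Phi}) and using that $d$, the Hodge star $*$ and $\mu$ are real operators shows that $\overline{\Phi(\Omega)}=\Phi(\bar\Omega)$ for all $\Omega$ (by the uniqueness in (\ref{Phi})); in particular $\Phi(\bar\psi_i\wedge\psi_j)=\Phi(\overline{\eta_{ij}})=\overline{f_{ij}}$. Hence the second summand in the definition of $a_g(C)$ is the complex conjugate of the first, and one gets
\[ a_g(C)=-2\,\mathrm{Re}\sum_{i,j=1}^g\int_C\eta_{ij}\,\overline{f_{ij}}\, . \]

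Next I would compute each term $\int_C\eta_{ij}\,\overline{f_{ij}}$. By the first relation in (\ref{Phi}) we have $\eta_{ij}=d*d f_{ij}+c_{ij}\,\mu$ with $c_{ij}=\int_C\psi_i\wedge\bar\psi_j=-2\sqrt{-1}\,\delta_{ij}$ by the orthonormality of $(\psi_1,\dots,\psi_g)$. The contribution of the $c_{ij}\,\mu$ term is $c_{ij}\,\overline{\int_C\mu\,f_{ij}}=0$ by the normalization in (\ref{Phi}), so $\int_C\eta_{ij}\,\overline{f_{ij}}=\int_C(d*d f_{ij})\,\overline{f_{ij}}$. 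Stokes's theorem on the closed surface $C$ turns this into $-\int_C d\overline{f_{ij}}\wedge *d f_{ij}$, and a pointwise computation in a local holomorphic coordinate (using that $*$ on $1$-forms of a surface depends only on the conformal class) identifies $d\bar f\wedge *df$ with the nonnegative $2$-form $\|df\|^2\,\mu$, locally $(|f_x|^2+|f_y|^2)\,dx\wedge dy$, where $\|\cdot\|$ is the pointwise Arakelov norm. Thus each $\int_C\eta_{ij}\,\overline{f_{ij}}=-\int_C\|df_{ij}\|^2\,\mu$ is real and $\le 0$, vanishing exactly when $f_{ij}$ is constant; substituting back gives $a_g(C)=2\sum_{i,j}\int_C\|df_{ij}\|^2\,\mu\ge 0$.

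Finally I would treat the equality case. From the previous step, $a_g(C)=0$ forces every $f_{ij}=\Phi(\eta_{ij})$ to be constant, and the normalization in (\ref{Phi}) then forces $f_{ij}\equiv 0$; hence $\eta_{ij}=c_{ij}\,\mu=-2\sqrt{-1}\,\delta_{ij}\,\mu$ for all $i,j$. When $g=1$ this is just the identity $\psi_1\wedge\bar\psi_1=-2\sqrt{-1}\,\mu$, which holds by definition of $\mu$, and one checks directly that $d*d\Phi(\psi_1\wedge\bar\psi_1)=\psi_1\wedge\bar\psi_1-(-2\sqrt{-1})\mu=0$, so $\Phi(\psi_1\wedge\bar\psi_1)=0$ and $a_1(C)=0$. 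When $g\ge 2$, choosing $i\ne j$ would force $\psi_i\wedge\bar\psi_j\equiv 0$ on $C$; but $\psi_j$ is a nonzero holomorphic $1$-form, hence nonvanishing off a finite set, and at any point where $\psi_j\ne 0$ the vanishing of $\psi_i\wedge\bar\psi_j$ forces $\psi_i=0$ pointwise in the one-dimensional space of $(1,0)$-covectors; so $\psi_i$ vanishes on a dense open subset of $C$, hence identically, contradicting that $(\psi_1,\dots,\psi_g)$ is a basis of $H^{1,0}$. Therefore $a_g(C)>0$ for $g\ge 2$.

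The only points needing care are the conjugation bookkeeping in the first step and fixing the Hodge-star sign so that $\int_C(d*d f)\,\bar f=-\int_C\|df\|^2\,\mu\le 0$; I expect the (modest) crux to be the verification that $d\bar f\wedge *df$ is pointwise nonnegative, everything else being formal manipulation of the defining relations (\ref{Phi}).
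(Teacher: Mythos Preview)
Your argument is correct and follows essentially the same route as the paper's proof: integrate by parts against the defining relation $d*d\Phi(\Omega)=\Omega-(\int_C\Omega)\mu$ to exhibit each summand as a nonnegative Dirichlet-type energy, then use that vanishing energy forces $\Phi(\psi_i\wedge\bar\psi_j)$ to be constant, hence $\psi_i\wedge\bar\psi_j$ to be a multiple of $\mu$. The only cosmetic differences are that the paper writes the energy in $\partial,\bar\partial$ notation (obtaining $\sqrt{-1}\int_C\partial\Phi(\Omega)\wedge\bar\partial\Phi(\bar\Omega)+\sqrt{-1}\int_C\partial\Phi(\bar\Omega)\wedge\bar\partial\Phi(\Omega)$) rather than your $d,*$ notation, and that you spell out in more detail why $\psi_i\wedge\bar\psi_j\equiv 0$ for $i\neq j$ contradicts $g\ge 2$, whereas the paper simply asserts this last implication.
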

\begin{proof} Let $\Omega \in D^2(C)$. Then we have
\[ - \int_C \Omega \, \Phi(\overline{\Omega}) = \sqrt{-1} \int_C \partial \Phi(\Omega) \wedge \overline{\partial} \Phi(\overline{\Omega}) + \sqrt{-1} \int_C \partial \Phi(\overline{\Omega}) \wedge \overline{\partial} \Phi(\Omega) \]
and hence $ - \int_C \Omega \, \Phi(\overline{\Omega}) \geq 0$.
Applying this to $\Omega = \psi_i \wedge \bar{\psi}_j$ for $1 \leq i,j \leq g$ we obtain the first part of the proposition.  
Assume that $\int_C \Omega \, \Phi(\overline{\Omega})=0$. Then both $\partial \Phi(\Omega)$ and $\overline{\partial} \Phi(\Omega)$ vanish and hence $\Phi(\Omega)$ is a constant. From (\ref{Phi}) we infer that $\Omega$ is a scalar multiple of $\mu$. We find that if $a_g(C)=0$, then each $\psi_i \wedge \bar{\psi}_j$ is a scalar multiple of $\mu$. It follows that $g=1$. Vice versa we have that $a_g$ vanishes if $g=1$.
\end{proof}
We mention that the strict positivity for $g \geq 2$ for $\varphi_g = 2\pi \, a_g$ has also been obtained by S. Zhang in \cite[Proposition 2.5.3]{zh}. He uses this result in \cite{zh} to give a surprising application in arithmetic geometry: for a smooth, projective and geometrically connected curve of genus $\geq 2$ defined over a number field, the truth of an arithmetic version of a standard conjecture of Hodge index type due to Grothendieck implies the Bogomolov conjecture. 

\section{Arakelov Green's function and Arakelov metric} \label{Green-metric}

In this section we introduce the Arakelov-Green's function on a compact Riemann surface $C$ of genus $g \geq 1$. References for this section are Arakelov's original paper \cite[Sections 3-4]{ar} and \cite[Section~2]{we}.

For $x \in C$ denote by $\delta_x \in D^2(C)$ the Dirac current supported at $x$.
Let $\Phi$ denote the Green operator with respect to the Arakelov volume form $\mu$. The Arakelov Green's function $G(x,\cdot)$ is then defined to be the function on $C$ given by
\[ G(x,\cdot) = \exp(2\pi \Phi(\delta_x)) \, . \]
The square of $G(x,\cdot)$ is a $C^\infty$-function on $C$, vanishing only at $x$. From (\ref{Phi}) we obtain that the Arakelov Green's function satisfies the conditions
\begin{equation} \label{conditions} \frac{-\deldelbar}{\pi \sqrt{-1}} \log G(x,\cdot) = \delta_x - \mu \, , \quad \int_C \log G(x,\cdot) \, \mu = 0 \, , 
\end{equation}
and in fact these conditions uniquely determine $\log G(x,\cdot)$ as an element of $D^0(C)$.
Further, applying (\ref{symmetry}) to $\Omega' = \delta_x$ we obtain
\begin{equation} \label{PhiG} \Phi(\Omega)(x) = \frac{1}{2\pi} \int_C \log G(x, \cdot) \, \Omega
\end{equation}
for all $\Omega \in D^2(C)$.

Let $\Delta$ be the diagonal on $C\times C$. We use $G$ to put a hermitian metric on $\oo(\Delta)$, as follows. Let $1$ denote the canonical meromorphic section of $\oo(\Delta)$. Then we demand that $\|1\|^2(x,y) = G(x,y)^2$. As the normal bundle to the diagonal equals the tangent bundle $TC$ of $C$, we obtain a natural metric on $TC$. We call this the Arakelov metric. We put
\[ h = c_1(\oo(\Delta)) \in A^2(C \times C) \, .  \]
It follows that the identity of currents
\[ \frac{-\deldelbar}{\pi \sqrt{-1}} \log G = \delta_\Delta - h \]
holds on $C \times C$.
In \cite[Proposition 3.1]{ar} one finds the explicit formula 
\begin{equation} \label{explicith}
 h(x,y)= \mu(x) + \mu(y) - \sqrt{-1} \sum_{i=1}^g (\psi_i(x) \bar{\psi}_i(y)+\psi_i(y) \bar{\psi}_i(x)) 
\end{equation}
for the $2$-form $h$. In particular we find that
\begin{equation} \label{c1tangent} c_1(TC) = h|_\Delta = (2-2g)\,\mu \, .
\end{equation}
The following proposition expresses $a_g(C)$ explicitly in terms of $G$ and $h$.
\begin{prop} \label{formula_ag} The formula
\[ a_g(C) = \frac{1}{2\pi} \int_{C \times C} \log G \, h^2 \]
holds.
\end{prop}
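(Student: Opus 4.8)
The plan is to substitute Arakelov's explicit formula (\ref{explicith}) for the $2$-form $h$ into the integral and reduce the resulting expression directly to the defining formula for $a_g(C)$ by means of (\ref{PhiG}). Write $h=\mu_x+\mu_y-\eta$, where $\mu_x,\mu_y$ denote the pullbacks of $\mu$ along the two projections $C\times C\to C$, and where $\eta(x,y)=\sqrt{-1}\sum_{i=1}^g\bigl(\psi_i(x)\wedge\bar\psi_i(y)+\psi_i(y)\wedge\bar\psi_i(x)\bigr)$. Since $\mu_x$ and $\mu_y$ are pulled back from a curve we have $\mu_x^2=\mu_y^2=0$, and as all three summands of $h$ are $2$-forms (hence of even degree) we get $h^2=2\,\mu_x\wedge\mu_y-2\,(\mu_x+\mu_y)\wedge\eta+\eta^2$.

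The first step is to discard all terms but $\eta^2$. We have $\int_{C\times C}\log G\,\mu_x\wedge\mu_y=\int_C\bigl(\int_C\log G(x,\cdot)\,\mu\bigr)\mu=0$ by the normalization in (\ref{conditions}); and wedging either $\mu_x$ or $\mu_y$ against any summand of $\eta$ yields a form of degree $3$ in one of the two curve variables, hence $0$, so that $(\mu_x+\mu_y)\wedge\eta=0$ identically. Therefore $\frac{1}{2\pi}\int_{C\times C}\log G\,h^2=\frac{1}{2\pi}\int_{C\times C}\log G\,\eta^2$.

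The second step is to expand $\eta^2$. Among the resulting terms, those involving $\psi_i\wedge\psi_j$ or $\bar\psi_i\wedge\bar\psi_j$ in the same curve variable vanish for bidegree reasons on $C$, and after reordering the wedge factors — each reordering being an even permutation — one finds
\[ \eta^2=-\sum_{i,j=1}^g\Bigl((\psi_i\wedge\bar\psi_j)(x)\wedge(\bar\psi_i\wedge\psi_j)(y)+(\bar\psi_i\wedge\psi_j)(x)\wedge(\psi_i\wedge\bar\psi_j)(y)\Bigr), \]
the global minus sign coming from $\sqrt{-1}^{\,2}=-1$. Finally, applying Fubini's theorem (legitimate since $\log G\in L^1(C\times C)$ and the remaining forms are smooth) and integrating out the second variable against $\log G(x,y)$, each inner integral $\int_C(\cdots)(y)$ becomes $2\pi$ times the Green operator $\Phi$ applied to the corresponding $2$-form, by (\ref{PhiG}). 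The prefactor $\frac{1}{2\pi}$ cancels this $2\pi$, and the two surviving families of terms reassemble precisely into the expression defining $a_g(C)$.

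I do not expect a genuine obstacle: the argument is essentially bookkeeping, with Arakelov's formula (\ref{explicith}) and the integral representation (\ref{PhiG}) of $\Phi$ doing all the work. The one point that needs care is the consistent tracking of signs — the even permutations used to reorder the wedge products, the factor $\sqrt{-1}^{\,2}$, and the normalization $\frac{1}{2\pi}$ — so that the two surviving families of terms in $\eta^2$ combine with exactly the overall sign appearing in the definition of $a_g(C)$.
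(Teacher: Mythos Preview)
Your proposal is correct and follows essentially the same approach as the paper: expand $h^2$ via Arakelov's explicit formula (\ref{explicith}), drop the $2\,\mu(x)\mu(y)$ term using the normalization in (\ref{conditions}), and match the surviving mixed terms to the definition of $a_g(C)$ by means of (\ref{PhiG}). The only cosmetic difference is direction: the paper starts from the definition of $a_g(C)$ and rewrites it as the double integral, whereas you start from the double integral and reduce it to the definition; the ingredients and the computation of $h^2$ are identical.
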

\begin{proof} By (\ref{PhiG}) we can write
\[ \begin{split}
a_g(C) & =  - \sum_{i,j=1}^g \int_C \left( \psi_i \wedge \bar{\psi}_j \Phi(\bar{\psi}_i \wedge \psi_j) + \bar{\psi}_i \wedge \psi_j \Phi(\psi_i \wedge \bar{\psi}_j) \right) \\
& = - \frac{1}{2\pi} \sum_{i,j=1}^g \int_{C \times C} \log G \left(\psi_i(x)  \bar{\psi}_j(x) \bar{\psi}_i(y) \psi_j(y) + \bar{\psi}_i(x) \psi_j(x) \psi_i(y)  \bar{\psi}_j(y) \right)   \, .  \end{split} \]
From (\ref{explicith}), a small calculation yields
\[
 h^2 = 2 \, \mu(x) \, \mu(y) - \sum_{i,j=1}^g \left(\psi_i(x)  \bar{\psi}_j(x) \bar{\psi}_i(y) \psi_j(y) + \bar{\psi}_i(x) \psi_j(x) \psi_i(y)  \bar{\psi}_j(y) \right) \, . \]
The proposition follows by noting that $\int_C \log G(x,\cdot)\,\mu=0$.
\end{proof}
\begin{lem} \label{selfinters} Let $p_1,p_2 \colon C \times C \to C$ be the projection on the first and second coordinate, respectively. The form $h$ restricts to the Arakelov form on any fiber of $p_1$ or $p_2$. Further, we have the equalities
\[ h \,p_1^*\mu = h \,p_2^* \mu = p_1^*\mu \, p_2^* \mu \]
of $4$-forms on $C \times C$. Finally, the identity
\[ \int_{C \times C} (h - p_1^* \mu - p_2^* \mu)^2 = -2g  \]
holds.
\end{lem}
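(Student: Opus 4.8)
The plan is to prove the three claims in order, using the explicit formula (\ref{explicith}) for $h$ together with the orthonormality relations $\frac{\sqrt{-1}}{2}\int_C \psi_i \wedge \bar\psi_j = \delta_{ij}$ and the fact that $\int_C \mu = 1$, $\int_C \log G(x,\cdot)\,\mu = 0$. For the first claim, I would simply restrict (\ref{explicith}) to a fiber $\{x\}\times C$ of $p_1$: the terms $\psi_i(x)\bar\psi_i(y)$ and $\psi_i(y)\bar\psi_i(x)$ become, up to constants, $\bar\psi_i(y)$ and $\psi_i(y)$ respectively, which are $1$-forms on $C$ and hence vanish when wedged against each other in the sense of being multiplied — wait, more carefully: the restriction of $h$ to $\{x\}\times C$ is a $2$-form on $C$, and one reads off from (\ref{explicith}) that the $\mu(x)$ term dies (pullback of a form from a point), leaving $\mu(y) - \sqrt{-1}\sum_i(\psi_i(x)\bar\psi_i(y) + \psi_i(y)\bar\psi_i(x))$; but this is not obviously $\mu$. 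In fact the cleaner route is the one already indicated in Section \ref{Green-metric}: the normal bundle to $\Delta$ is $TC$, and $h|_\Delta = c_1(TC)$ with the Arakelov metric by construction, so restricting the hermitian line bundle $\oo(\Delta)$ to a fiber $\{x\}\times C$ gives $\oo([x])$ with the metric $\|1\|^2(y) = G(x,y)^2$, whose first Chern form is exactly $\mu$ by the first condition in (\ref{conditions}). This identifies $h|_{\{x\}\times C} = \mu$, and symmetrically for $p_2$.

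For the second claim, $h\,p_1^*\mu$ is a $4$-form on $C\times C$, so in the expansion of $h$ via (\ref{explicith}) every term containing a factor $\psi_i(x)$ or $\bar\psi_i(x)$ (a $1$-form in the $x$-variable) is killed upon multiplication by $p_1^*\mu$, which is already a $2$-form in $x$; what survives is $\mu(y)\,p_1^*\mu = p_1^*\mu\,p_2^*\mu$. The identity $h\,p_2^*\mu = p_1^*\mu\,p_2^*\mu$ is symmetric. Alternatively this follows from the first claim by a projection-formula argument: $\int_{p_1}(h\, p_1^*\mu) = (\int_{p_1} h)\,\mu$ fiberwise equals $\mu\cdot 1$, and comparing top-degree forms. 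I would present the direct computation as it is shortest.

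For the third and main claim, expand
\[ \int_{C\times C}(h - p_1^*\mu - p_2^*\mu)^2 = \int_{C\times C}\bigl( h^2 - 2h\,p_1^*\mu - 2h\,p_2^*\mu + 2\,p_1^*\mu\,p_2^*\mu + (p_1^*\mu)^2 + (p_2^*\mu)^2 \bigr). \]
Now $(p_1^*\mu)^2 = 0 = (p_2^*\mu)^2$ for dimension reasons; by the second claim the middle terms combine to $-4\int p_1^*\mu\,p_2^*\mu + 2\int p_1^*\mu\,p_2^*\mu = -2\int_{C\times C} p_1^*\mu\,p_2^*\mu = -2$, using $\int_C\mu = 1$ and Fubini. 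It remains to compute $\int_{C\times C} h^2$. For this I would use the explicit expression for $h^2$ already derived in the proof of Proposition \ref{formula_ag}, namely $h^2 = 2\mu(x)\mu(y) - \sum_{i,j}\bigl(\psi_i(x)\bar\psi_j(x)\bar\psi_i(y)\psi_j(y) + \bar\psi_i(x)\psi_j(x)\psi_i(y)\bar\psi_j(y)\bigr)$. Integrating: the first term gives $2$; for the sum, integrate over $x$ and over $y$ separately using $\int_C \psi_i\wedge\bar\psi_j = -\frac{2}{\sqrt{-1}}\delta_{ij} = 2\sqrt{-1}\,\delta_{ij}$ — care with signs and with the reordering of the four $1$-forms into an $(x)$-pair and a $(y)$-pair, which costs a sign — so each of the two sums contributes $\sum_{i,j}(2\sqrt{-1}\,\delta_{ij})(\pm\ldots)$, and the bookkeeping yields that the full sum integrates to $2g + \text{(something)}$; putting it together should give $\int_{C\times C}h^2 = 4g - 2$. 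Then the total is $(4g-2) + (-2) = 4g - 4$... which is not $-2g$. So I must recheck the sign conventions: the relevant point is that $\psi_i(x)\bar\psi_j(x)$ is shorthand for $\psi_i(x)\wedge\bar\psi_j(x)$ as a $2$-form on the $x$-factor, integrating to $2\sqrt{-1}\,\delta_{ij}$, but reshuffling $\psi_i(x)\wedge\bar\psi_j(x)\wedge\bar\psi_i(y)\wedge\psi_j(y)$ from the wedge order inherited in $h\wedge h$ introduces a sign $(-1)$, so each summand integrates to $-(2\sqrt{-1})^2\delta_{ij}\delta_{ij}\cdot(\text{sign}) $; tracking this honestly is the one genuinely error-prone step, and I expect $\int h^2 = 2 - 2g$ (so that with the $-2$ from the cross terms one could still not land at $-2g$) — hence the correct organization is surely that $\int_{C\times C}h^2 = 4g-2$ is wrong and instead the clean statement is $\int_{C\times C} h^2 = \int_{C\times C}(2\mu(x)\mu(y)) - (\text{sum}) $ with the sum equal to $2g$... ultimately the arithmetic must be pinned down so that $\int(h - p_1^*\mu - p_2^*\mu)^2 = -2g$, i.e. $\int h^2 = 2 - 2g$.

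\textbf{The main obstacle} is therefore not conceptual but is precisely this sign/normalization bookkeeping in evaluating $\int_{C\times C}h^2$: one must be scrupulous about (i) whether $\psi_i(x)\bar\psi_j(x)$ denotes $\psi_i\wedge\bar\psi_j$ or carries an extra factor, (ii) the sign picked up in permuting the four $1$-form factors into $x$-factors times $y$-factors, and (iii) the normalization $\frac{\sqrt{-1}}{2}\int_C\psi_i\wedge\bar\psi_j = \delta_{ij}$. Once $\int_{C\times C}h^2 = 2 - 2g$ is established, combining with the cross-term computation $-4 + 2 = -2$ from the second claim — no wait, this would give $2-2g-2 = -2g$, which is exactly the assertion, confirming that $\int h^2 = 2-2g$ is the target value — and the vanishing $(p_i^*\mu)^2 = 0$ finishes the proof immediately. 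As a sanity check one notes $g=1$ gives $0$, consistent with the fiber class $\mu$ having self-intersection computing the Euler characteristic, and the answer $-2g$ will reappear (up to the contribution of $e_1^A$) in the verification of relation (K1).
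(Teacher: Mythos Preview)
Your approach is exactly the paper's: the proof there reads in full ``All assertions can be verified directly from formula (\ref{explicith}) for $h$.'' So there is nothing to compare on strategy; your write-up is simply a more explicit version of what the paper leaves to the reader.

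Two small remarks that resolve your hesitations. First, for part~1 the direct restriction from (\ref{explicith}) does work immediately: $\psi_i(x)=p_1^*\psi_i$ is a $1$-form in the $x$-variable, so its pullback to any fiber $\{x_0\}\times C$ vanishes (pullback of a $1$-form to a point), and the sum dies along with $\mu(x)$, leaving exactly $\mu(y)$. Your Green's-function alternative via (\ref{conditions}) is also fine.

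Second, your structural reduction in part~3 is correct: with $(p_i^*\mu)^2=0$ and the second claim giving $h\,p_i^*\mu=p_1^*\mu\,p_2^*\mu$, one gets
\[
\int_{C\times C}(h-p_1^*\mu-p_2^*\mu)^2=\int_{C\times C}h^2-2,
\]
so the whole lemma reduces to $\int_{C\times C}h^2=2-2g$. This is forced on cohomological grounds (self-intersection of the diagonal equals $\chi(C)$), which is a good sanity anchor. Your difficulty in extracting $2-2g$ from the explicit $h^2$ formula is not a conceptual gap; it stems from a missing factor $\tfrac12$ in front of the sum in (\ref{explicith}) as printed (check: restricting (\ref{explicith}) to the diagonal gives $(2-4g)\mu$ rather than the required $(2-2g)\mu$ of (\ref{c1tangent})). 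With the coefficient $-\tfrac{\sqrt{-1}}{2}$ in place of $-\sqrt{-1}$, both (\ref{c1tangent}) and $\int h^2=2-2g$ come out on the nose, and your bookkeeping goes through.
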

\begin{proof} All assertions can be verified directly from formula (\ref{explicith}) for $h$.
\end{proof}
We globalize the above discussion as follows. Let $p \colon \cc_g \to \mm_g$ be the universal family of Riemann surfaces of genus $g$. First, the Arakelov-Green's function defines a function $G \colon \cc_g^2 \to \rr$, and a natural hermitian metric on the diagonal line bundle $\oo(\Delta)$ on $\cc_g^2$. As the relative tangent bundle $T_{\cc_g/\mm_g}$ is precisely the normal bundle of the diagonal in $\cc_g^2$, this induces a natural Arakelov metric on $T_{\cc_g/\mm_g}$. Denote by $h \in A^2(\cc^2_g)$ the first Chern form of $\oo(\Delta)$. In particular we have an equality
\begin{equation} \label{2ndvarG} \frac{-\deldelbar}{\pi \sqrt{-1}} \log G = \delta_\Delta - h 
\end{equation}
of $2$-currents on $\cc_g^2$.

We define
\[ e^A = h|_\Delta \, , \]
i.e. the restriction of $h$ to the diagonal. Then $e^A$ is the first Chern form of $T_{\cc_g/\mm_g}$ with its Arakelov metric. If $C$ is a fiber of $\cc_g \to \mm_g$ with Arakelov volume form $\mu$ we obtain
\[ e^A|_C = (2-2g)\mu  \]
from (\ref{c1tangent}). Write $F$ for the canonical map $ \cc_g^2 \to \mm_g$. The following proposition computes the second variation of $a_g$ on $\mm_g$ in terms of fiber integrals.
\begin{prop} \label{a_gfiberint} The equality
\[ -2\sqrt{-1}\, \deldelbar \, a_g = \int_F h^3 - e_1^A  \]
holds in $A^2(\mm_g)$.
\end{prop}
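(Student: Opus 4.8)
The plan is to differentiate the closed formula for $a_g$ provided by Proposition \ref{formula_ag}. Globalizing that identity over the universal family, one has $2\pi\,a_g = \int_F \log G\,h^2$ as functions on $\mm_g$, where $\log G$ is regarded as an $L^1_{\mathrm{loc}}$ function (equivalently a $0$-current) on $\cc_g^2$ with a logarithmic singularity along $\Delta$, and the fiber integral runs over the compact fibers $C\times C$ of $F\colon\cc_g^2\to\mm_g$. The first step is to apply $\deldelbar$ to both sides and commute it past the fiber integral: since the fibers of $F$ are compact without boundary, $\int_F$ is a chain map for currents, so it commutes with $d$ and, respecting types, with $\partial$ and $\bar\partial$ separately; hence $2\pi\,\deldelbar\,a_g = \int_F \deldelbar(\log G\cdot h^2)$.

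The second step is a pointwise computation of $\deldelbar(\log G\cdot h^2)$ on $\cc_g^2$. As $h$ is the first Chern form of a hermitian line bundle, it is a closed real $(1,1)$-form, so $\partial h = \bar\partial h = 0$ and therefore $h^2$ is killed by both $\partial$ and $\bar\partial$. The Leibniz rule for currents (legitimate because $h^2$ is smooth and $\log G$ is locally integrable) then gives $\deldelbar(\log G\cdot h^2) = (\deldelbar\log G)\wedge h^2$, with no correction terms. Now invoke the current equation (\ref{2ndvarG}), rewritten as $\deldelbar\log G = \pi\sqrt{-1}\,(h - \delta_\Delta)$, to obtain the identity of currents $\deldelbar(\log G\cdot h^2) = \pi\sqrt{-1}\,\bigl(h^3 - \delta_\Delta\wedge h^2\bigr)$ on $\cc_g^2$.

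The third step is to take the fiber integral over $F$. The term $\int_F h^3$ is retained unchanged. For the second term, the diagonal $\Delta\subset\cc_g^2$ maps isomorphically onto $\cc_g$ under either projection, and under this identification $F|_\Delta$ becomes $p\colon\cc_g\to\mm_g$ while $h|_\Delta = e^A$ by definition; hence $\int_F(\delta_\Delta\wedge h^2) = \int_p (h|_\Delta)^2 = \int_p (e^A)^2 = e_1^A$. Combining, $2\pi\,\deldelbar\,a_g = \pi\sqrt{-1}\,\bigl(\int_F h^3 - e_1^A\bigr)$, and multiplying through by $-\sqrt{-1}/\pi$ yields $-2\sqrt{-1}\,\deldelbar\,a_g = \int_F h^3 - e_1^A$, which is the assertion.

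The only real subtlety, and the step I would treat most carefully, is the first one: every manipulation happens at the level of currents, because $\log G$ is singular along $\Delta$ and $\delta_\Delta$ appears, so one must justify that $\deldelbar$ passes through $\int_F$ for currents on a proper holomorphic submersion and that the Leibniz identity $\deldelbar(\log G\cdot h^2)=(\deldelbar\log G)\wedge h^2$ holds with no boundary contributions. Here the smoothness and closedness of $h^2$, together with the compactness and boundarylessness of the fibers of $F$, are precisely what make these identities exact; once they are in place the computation is the short chain displayed above.
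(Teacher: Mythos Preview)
Your proof is correct and follows essentially the same route as the paper's own argument: differentiate the formula of Proposition~\ref{formula_ag}, pass $\deldelbar$ through the fiber integral, apply the current equation~(\ref{2ndvarG}) against the closed form $h^2$, and identify $\int_F \delta_\Delta \wedge h^2$ with $e_1^A$. The only difference is cosmetic---you spell out more carefully the justification for commuting $\deldelbar$ with $\int_F$ and for the Leibniz rule at the level of currents, which the paper leaves implicit.
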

\begin{proof}
By Proposition \ref{formula_ag} and identity (\ref{2ndvarG}) we have
\[ \begin{split}
-2\sqrt{-1}\, \deldelbar \, a_g &= \frac{\deldelbar}{\pi\sqrt{-1}} \int_{F} \log G \, h^2  =  \int_{F} \frac{\deldelbar}{\pi\sqrt{-1}} \log G \, h^2 \\
& = \int_F (h - \delta_\Delta)h^2 = \int_F h^3 - \int_p (h|_\Delta)^2 \\
& = \int_F h^3 - \int_p (e^A)^2 = \int_F h^3 - e_1^A \, .
\end{split}
\]
This proves the proposition.
\end{proof}

\section{Deligne pairing} \label{sec:deligne}

The aim of this section is to introduce the Deligne pairing for hermitian line bundles on a family of compact Riemann surfaces. References for this section are \cite[Section XIII.5]{acg} and \cite[Section~6]{de}. 

Let $B$ be a complex manifold and let $p \colon \cc \to B$ be a family of compact Riemann surfaces. Let $\ll, \mm$ be two holomorphic line bundles on $\cc$. To these data one can canonically associate a holomorphic line bundle denoted $\langle \mathcal{L},\mathcal{M} \rangle$ on $B$, in the following way: local generators of $\langle \mathcal{L},\mathcal{M} \rangle$ are formal symbols $\langle l,m \rangle$, where $l,m$ are local generating sections of $\mathcal{L}, \mathcal{M}$, subject to the following relations:
\[ \langle l, fm \rangle = f[\operatorname{div}{l}] \cdot \langle l,m \rangle \, , \quad
\langle fl, m \rangle = f[\operatorname{div}{m}] \cdot \langle l, m \rangle \, ,  \]
for all germs of holomorphic functions $f$. Here, if $D=\sum_x n_x x$ is a divisor on a Riemann surface $C$, and $g$ is a function on $C$ whose singularities are disjoint from $D$, we write $g[D]$ as a shorthand for $\sum_x n_x g(x)$. An application of Weil's reciprocity law shows that the above relations are well-defined. We call $\pair{\mathcal{L},\mathcal{M}}$ the Deligne pairing associated to $\mathcal{L},\mathcal{M}$.

The Deligne pairing is biadditive and symmetric, in the sense that for holomorphic line bundles $\mathcal{L}_1$, $\mathcal{L}_2$, $\mathcal{M}_1$, $\mathcal{M}_2$, $\mathcal{L}$, $\mathcal{M}$ on $\cc$ we have canonical isomorphisms \[ \langle \mathcal{L}_1 \otimes \mathcal{L}_2, \mathcal{M} \rangle \isom \langle \mathcal{L}_1 , \mathcal{M} \rangle \otimes \langle
\mathcal{L}_2 , \mathcal{M} \rangle \, , \, \langle \mathcal{L}, \mathcal{M}_1 \otimes \mathcal{M}_2 \rangle \isom \langle \mathcal{L}, \mathcal{M}_1 \rangle \otimes \langle \mathcal{L} , \mathcal{M}_2 \rangle \, , \]
and $\langle \mathcal{L},\mathcal{M} \rangle \isom \langle \mathcal{M} , \mathcal{L}\rangle$. An isomorphism $\mathcal{L}_1 \isom \mathcal{L}_2$ of holomorphic line bundles induces a natural isomorphism $\langle \mathcal{L}_1, \mathcal{M} \rangle \isom \langle \mathcal{L}_2, \mathcal{M} \rangle$.
Finally, if $x \colon B \to \cc$ is a holomorphic section of $p$, then there is a canonical isomorphism $\langle \oo_\cc(x), \ll \rangle \isom x^* \ll$ of line bundles on $B$.

If we suppose that both $\mathcal{L}, \mathcal{M}$ are equipped with a $C^\infty$-hermitian metric, then according to \cite[Section~6.3]{de} we can put a natural associated hermitian metric $\|\cdot\|$ on the Deligne pairing $\langle \mathcal{L} , \mathcal{M} \rangle$, by requiring the identity
\begin{equation} \label{metric} \log \| \langle l,m \rangle \| =  \log \| m \| [\operatorname{div}{l}] + \int_p \log \|l\| \, c_1(\mm)
\end{equation}
to hold for all local generating sections $l$, $m$ of $\ll$, $\mm$ (with disjoint support). It is shown in \cite[Section~6]{de} that $\|\cdot \|$ provides $\langle \mathcal{L},\mathcal{M} \rangle$ with a $C^\infty$-hermitian metric, and that each of the canonical isomorphisms mentioned above is an isometry. Further, let $x \colon B \to \cc$ be a holomorphic section and endow the line bundle $\oo_\cc(x)$ with the Arakelov metric given by putting $\|1\|(y)=G(x,y)$, where $G$ is the Arakelov-Green's function on $\cc \times_B \cc$, and let $\ll$ be a hermitian line bundle whose first Chern form restricts to a multiple of the Arakelov form $\mu$ in each fiber of $p \colon \cc \to B$. One then checks easily from (\ref{conditions}) and (\ref{metric}) that the canonical isomorphism $\langle \oo_\cc(x), \ll \rangle \isom x^* \ll$ is an isometry.

From (\ref{metric}) one derives the following expression for the first Chern form of the Deligne pairing (cf. \cite[Proposition 6.6]{de}).
\begin{prop} \label{c1deligne}
Let $p \colon \cc \to B$ be a family of compact Riemann surfaces, and $\ll$ and $\mm$ two hermitian line bundles on $\cc$. Let $\langle \ll, \mm \rangle$ be the Deligne pairing of $\ll, \mm$ on $B$, equipped with its hermitian metric determined by (\ref{metric}). Then the equality of differential forms
\[ c_1( \langle \ll, \mm \rangle) = \int_p c_1(\ll) \wedge c_1(\mm)  \]
holds in $A^2(B)$.
\end{prop}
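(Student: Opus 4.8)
\emph{Proof strategy.} Since the assertion is local on $B$, the plan is to fix a point of $B$, work over a small neighbourhood $V$ of it, and differentiate the defining relation~(\ref{metric}) of the Deligne metric. First I would choose meromorphic sections $l$ of $\ll$ and $m$ of $\mm$ over $p^{-1}(V)$ whose divisors $\operatorname{div}(l)$ and $\operatorname{div}(m)$ are relative Cartier divisors, finite and flat over $V$, with disjoint supports; after shrinking $V$ one may moreover assume each of these divisors is a $\zz$-linear combination of holomorphic sections of $p$. This is possible since, locally on $B$, any line bundle on a family of curves is of the form $\oo_\cc(D)$ for a relative divisor $D$ (twist by a power of an auxiliary section to make it relatively very ample, then take the divisor of a generic global section), and two such divisors can be separated by a generic choice. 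For such a choice the symbol $\pair{l,m}$ is a \emph{generating} local section of $\pair{\ll,\mm}$ over $V$, and hence, by the conventions of the introduction, $c_1(\pair{\ll,\mm}) = \frac{\deldelbar}{2\pi\sqrt{-1}}\log\|\pair{l,m}\|^2$ on $V$.

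Writing $\operatorname{div}(l)=\sum_i n_i\,s_i$ with the $s_i$ holomorphic sections of $p$, relation~(\ref{metric}) reads
\[ \log\|\pair{l,m}\|^2 \;=\; \sum_i n_i\, s_i^*\bigl(\log\|m\|^2\bigr) \;+\; \int_p \log\|l\|^2 \, c_1(\mm) \, . \]
I would then apply $\tfrac{\deldelbar}{2\pi\sqrt{-1}}$ to both sides. For the finite sum, pull the Poincar\'e--Lelong identity $\tfrac{\deldelbar}{2\pi\sqrt{-1}}\log\|m\|^2 = c_1(\mm) - \delta_{\operatorname{div}(m)}$ on $\cc$ back along each $s_i$; since $\operatorname{div}(m)$ is disjoint from the $s_i$ the Dirac contribution disappears and one is left with $\sum_i n_i\,s_i^*c_1(\mm) = \int_p \delta_{\operatorname{div}(l)}\wedge c_1(\mm)$. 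For the fibre integral, I would commute $\deldelbar$ with $\int_p$ — legitimate because $p$ is a proper submersion, so that $\int_p$ commutes with $\partial$ and $\bar\partial$ — and then, using that $c_1(\mm)$ is closed, apply Poincar\'e--Lelong on $\cc$ once more in the form $\tfrac{\deldelbar}{2\pi\sqrt{-1}}\log\|l\|^2 = c_1(\ll) - \delta_{\operatorname{div}(l)}$ to obtain $\int_p\bigl(c_1(\ll) - \delta_{\operatorname{div}(l)}\bigr)\wedge c_1(\mm)$.

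Adding the two contributions, the terms $\int_p\delta_{\operatorname{div}(l)}\wedge c_1(\mm)$ cancel, and one is left with $c_1(\pair{\ll,\mm}) = \int_p c_1(\ll)\wedge c_1(\mm)$ on $V$; since $V$ was arbitrary, this yields the identity on all of $B$.

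The only genuinely delicate point I anticipate is the opening step: arranging, locally over $B$, the meromorphic sections $l$ and $m$ with relative divisors of disjoint support that split into honest sections of $p$, and checking that $\pair{l,m}$ is then a \emph{generating} section of $\pair{\ll,\mm}$, so that its Chern form is computed by the plain $\tfrac{\deldelbar}{2\pi\sqrt{-1}}\log\|\,\cdot\,\|^2$ with no spurious Dirac current arising on $B$. Everything after that is routine: Poincar\'e--Lelong on $\cc$ together with the commutation of $\deldelbar$ past the fibre integral along the proper submersion $p$. One could alternatively bypass the reduction to sections and fibre-integrate currents directly, at the price of a somewhat heavier formalism.
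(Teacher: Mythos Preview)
Your proposal is correct and is exactly the derivation the paper has in mind: the paper does not spell out a proof but simply says the formula ``is derived from (\ref{metric})'' and cites \cite[Proposition~6.6]{de}, and your argument---applying $\tfrac{\deldelbar}{2\pi\sqrt{-1}}$ to (\ref{metric}), using Poincar\'e--Lelong for $l$ and $m$, and commuting $\deldelbar$ with the fibre integral---is precisely that standard derivation.
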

In particular, denoting by $a, b \in H^2(\cc,\zz)$ the cohomology classes of $\ll, \mm$, the first Chern form of the Deligne pairing $\langle \ll,\mm \rangle$ represents the Gysin image $p_{\,!}(a b) \in H^2(B,\zz)$.

As a first application of Proposition \ref{c1deligne} let $p_1 \colon \cc_g^2 \to \cc_g$ be the projection on the first coordinate and let $x \colon \cc_g \to \cc_g^2$ be the tautological section of $p_1$. Note that $x$ can be identified with the diagonal embedding $\Delta \colon \cc_g  \to \cc_g^2$. On $\cc_g$ we then have canonical isometries
\begin{equation} \label{Deligne} \langle \oo(x), \oo(x) \rangle = \langle \oo(\Delta), \oo(\Delta) \rangle \isom \Delta^* \oo(\Delta) \isom T_{\cc_g/\mm_g} \, ,
\end{equation}
where $T_{\cc_g/\mm_g}$ is equipped with the Arakelov metric. Recall that we denote by $h \in A^2(\cc^2_g)$ the first Chern form of $\oo(\Delta)$, and by $e^A\in A^2(\cc_g)$ the first Chern form of $T_{\cc_g/\mm_g}$. From the above isometries we obtain by Proposition \ref{c1deligne} the following identity.
\begin{prop} \label{eA_as_fiberintegral}
The equality
\[  \int_{p_1} h^2 = e^A  \]
of $2$-forms holds on $\cc_g$.
\end{prop}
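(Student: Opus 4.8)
\emph{Proof proposal.} The plan is to obtain the identity as a one-line consequence of the first-Chern-form formula for the Deligne pairing (Proposition~\ref{c1deligne}) combined with the chain of canonical isometries~(\ref{Deligne}). Concretely, one regards $p_1 \colon \cc_g^2 \to \cc_g$ as a family of compact Riemann surfaces in the sense of Section~\ref{sec:deligne} --- it is the base change of the universal family $p \colon \cc_g \to \mm_g$ along $p$ itself, hence a proper holomorphic submersion with compact genus-$g$ fibres --- and applies the Deligne-pairing construction relative to $p_1$ to the hermitian line bundle $\oo(\Delta)$ on $\cc_g^2$, taken for both arguments.

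First I would record that the Deligne pairing $\pair{\oo(\Delta),\oo(\Delta)}$ relative to $p_1$, equipped with its metric determined by~(\ref{metric}), is a well-defined hermitian line bundle on $\cc_g$. By Proposition~\ref{c1deligne} and the definition $h = c_1(\oo(\Delta))$ we then have
\[ c_1\bigl(\pair{\oo(\Delta),\oo(\Delta)}\bigr) = \int_{p_1} c_1(\oo(\Delta)) \wedge c_1(\oo(\Delta)) = \int_{p_1} h^2 \]
in $A^2(\cc_g)$. On the other hand, the tautological section of $p_1$ is exactly the diagonal $\Delta \colon \cc_g \to \cc_g^2$, so the isometries~(\ref{Deligne}) identify $\pair{\oo(\Delta),\oo(\Delta)}$ with $\Delta^*\oo(\Delta)$ and hence with $T_{\cc_g/\mm_g}$ carrying the Arakelov metric. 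Isometric hermitian line bundles have equal first Chern forms, so $c_1(\pair{\oo(\Delta),\oo(\Delta)}) = c_1(T_{\cc_g/\mm_g}) = e^A$, and comparison with the displayed equation gives $\int_{p_1} h^2 = e^A$.

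I do not expect a genuine obstacle here: the statement is pure bookkeeping within the Deligne-pairing formalism, and all the real content sits upstream --- namely the metric compatibility~(\ref{metric}) built into the pairing and the fact, recorded just before Proposition~\ref{c1deligne}, that the canonical isomorphisms $\pair{\oo(x),\oo(x)} \isom x^*\oo(x)$ and $\Delta^*\oo(\Delta) \isom T_{\cc_g/\mm_g}$ are isometries for the Arakelov metrics (which in turn uses the characterisation~(\ref{conditions}) of $\log G$). One could instead try to prove the proposition by brute force, inserting the explicit expression~(\ref{explicith}) for $h$ into $h^2$ and integrating over the fibre of $p_1$ with the aid of Lemma~\ref{selfinters}; but this route is less transparent, since the fibre integral also collects the ``horizontal'' contributions of $h$ coming from the variation of the complex structure over $\cc_g$, whereas the Deligne-pairing argument absorbs these automatically. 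Hence the route via Proposition~\ref{c1deligne} is the one I would follow.
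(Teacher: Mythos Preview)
Your proposal is correct and follows exactly the paper's approach: the proposition is stated there as an immediate consequence of the isometries~(\ref{Deligne}) combined with Proposition~\ref{c1deligne}, which is precisely what you do. Your additional remarks about the brute-force alternative are accurate but not needed for the argument.
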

As a second application, we relate the first Chern form of the Deligne pairing of two relatively flat line bundles on a family $p \colon \cc \to B$ of compact Riemann surfaces to the form $\omega_0^\natural$ introduced in Section \ref{prelims}. As we will see below, the jacobian $J$ of a compact Riemann surface $C$ can be interpreted as the moduli space of flat (degree zero) holomorphic line bundles on $C$. In particular, given a flat holomorphic line bundle $\ll$ on $C$ one has a natural class $[\ll]$ on $J$.
\begin{prop} \label{deligneflatform}
Suppose that $\ll, \mm$ are two hermitian line bundles on $\cc$, whose first Chern form vanishes on each of the fibers of $\cc \to B$. In particular $\ll, \mm$ are of degree zero in each of the fibers of $p$. Then the underlying holomorphic line bundles give rise to two sections $[\ll],[\mm]$ of the jacobian bundle $q \colon \jj \to B$, and the equalities
\[ c_1( \langle \ll, \mm \rangle) = -2 \, ([\ll],[\mm])^* \omega_0^\natural \, , \quad  c_1(\langle \ll,\ll \rangle) = -2\, [\ll]^* \omega_0 \]
hold in $A^2(B)$.
\end{prop}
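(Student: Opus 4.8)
The plan is first to deduce the second equality from the first. Specializing $\mm=\ll$ in the first equality and using that $([\ll],[\ll])$ factors through the diagonal of $\jj\times_B\jj$, together with the fact (Section~\ref{prelims}) that $\omega_0^\natural$ restricts to $\omega_0$ on that diagonal, one gets $c_1(\langle\ll,\ll\rangle)=-2\,[\ll]^*\omega_0$. So everything reduces to the first equality. Two preliminary simplifications. (i) The asserted identity is insensitive to the choice of hermitian metrics on $\ll$ and $\mm$: any two metrics on $\ll$ with fiberwise vanishing first Chern form have a ratio whose logarithm is harmonic, hence constant, on each fiber, hence pulled back from $B$; by Proposition~\ref{c1deligne} and $\int_p c_1(\mm)=\deg(\mm|_{\mathrm{fiber}})=0$ this changes neither $c_1(\langle\ll,\mm\rangle)$ nor the sections $[\ll],[\mm]$. (ii) The Deligne pairing is biadditive, $\ll\mapsto[\ll]$ is additive, $\omega_0^\natural$ is bilinear, and an equality of $2$-forms on $B$ may be tested after a suitable base change (which is harmless for such identities); since $\mathrm{Pic}^0$ of a Riemann surface is generated by classes $[\oo(x-y)]$, I would reduce to the case $\ll=\oo_\cc(s_1-s_2)$, $\mm=\oo_\cc(t_1-t_2)$ for mutually disjoint holomorphic sections $s_1,s_2,t_1,t_2\colon B\to\cc$, equipped with the metrics coming from the Arakelov--Green's function $G$ on $\cc\times_B\cc$.

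In this situation the left-hand side is immediate. By biadditivity and the isometry $\langle\oo_\cc(x),\mm\rangle\isom x^*\mm$ of Section~\ref{sec:deligne} (applicable here because $c_1(\oo_\cc(t_j))$ restricts to $\mu$ on every fiber, by~(\ref{conditions})), one finds $c_1(\langle\ll,\mm\rangle)=\sum_{i,j}(-1)^{i+j}(s_i,t_j)^*h$, where $(s_i,t_j)\colon B\to\cc\times_B\cc$ is the section with components $s_i,t_j$; since these sections are disjoint, $(s_i,t_j)^*h=\tfrac{\deldelbar}{\pi\sqrt{-1}}\log G(s_i,t_j)$ by~(\ref{2ndvarG}), and hence
\[ c_1(\langle\ll,\mm\rangle)=\frac{\deldelbar}{\pi\sqrt{-1}}\log\frac{G(s_1,t_1)\,G(s_2,t_2)}{G(s_1,t_2)\,G(s_2,t_1)}\,. \]
For the right-hand side, $[\ll]$ and $[\mm]$ are obtained by Griffiths's Abel--Jacobi construction (Section~\ref{zero}) from the relative $0$-cycles $\Gamma=\sigma_1(B)-\sigma_2(B)$ and $\Gamma'=\tau_1(B)-\tau_2(B)$ in $\jj$, where $\sigma_i,\tau_j\colon B\to\jj$ are the based Abel--Jacobi images of $s_i,t_j$. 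Running the argument of Proposition~\ref{fiberint} with two cycles instead of one yields the bilinear analogue $([\ll],[\mm])^*\omega_0^\natural=\int_Q\omega_0^\natural\wedge\delta_{\Gamma\times_B\Gamma'}=\sum_{i,j}(-1)^{i+j}(\sigma_i,\tau_j)^*\omega_0^\natural$.

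What remains---and this is the main obstacle---is to identify $\sum_{i,j}(-1)^{i+j}(\sigma_i,\tau_j)^*\omega_0^\natural$ with $-\tfrac12$ times the $\deldelbar\log G$ expression above; equivalently, to control the first variation of the Abel--Jacobi map against the Arakelov--Green's function. I would carry this out by the same Stokes-theorem device as in the proof of Proposition~\ref{fiberint}: expanding $\omega_0^\natural$ in a local symplectic frame via Proposition~\ref{explicit} rewrites each $(\sigma_i,\tau_j)^*\omega_0^\natural$ as a sum of products $\lambda_a(\d\sigma_i)\wedge\lambda_b(\d\tau_j)$, each factor of which is expressed as a fiber integral $-\int_q\lambda\wedge\delta_{(\cdot)}$ exactly as there; feeding into this the defining property~(\ref{conditions}) of $G$ (that its fiberwise $\deldelbar$ is $\delta_x-\mu$), the harmonic-projection identity~(\ref{PhiG}), the expression $M=\tfrac{\sqrt{-1}}{2}\sum_i\psi_i\wedge\bar\psi_i$, and the normalizations $\int_C\mu=1$ and $\tfrac{\sqrt{-1}}{2}\int_C\psi_i\wedge\bar\psi_j=\delta_{ij}$ should collapse the sum to the claimed form, the constant $-2$ being produced by these normalizations. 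An alternative route, which moreover bypasses the reduction to divisors, is to establish directly the Gauss--Manin formula expressing the mixed $(1,1)$-component of $c_1(\ll)$, for a fiberwise flat hermitian line bundle $\ll$, in terms of $\d[\ll]\in A^1(B,\hh_\rr)$ and the tautological harmonic $1$-form $\omega_{(1)}$ of Section~\ref{volume}; given such a formula, Proposition~\ref{c1deligne} reduces $c_1(\langle\ll,\mm\rangle)=\int_p c_1(\ll)\wedge c_1(\mm)$ to a fiberwise contraction, which evaluates to $-2\,M_0(\d[\ll]\wedge\d[\mm])=-2\,([\ll],[\mm])^*\omega_0^\natural$ via the value of $\int_C\omega_{(1)}\wedge\omega_{(1)}$ recorded in Section~\ref{volume}.
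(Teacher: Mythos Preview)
Your reduction of the second equality to the first is exactly as in the paper. Beyond that, the two arguments diverge.

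The paper does not reduce to divisors of sections at all. Instead it introduces the \emph{universal Deligne pairing} $\dd=\langle p_{12}^*\pp_0,\,p_{13}^*\pp_0\rangle$ on $\jj\times_B\jj$, where $\pp_0$ is the Poincar\'e bundle on $\cc\times_B\jj$, and notes the tautological isometry $\langle\ll,\mm\rangle\isom([\ll],[\mm])^*\dd$. The whole proposition then comes down to showing $c_1(\dd)=-2\,\omega_0^\natural$, and this is a three-line fiber integral using Proposition~\ref{c1deligne} together with the explicit local formula $c_1(\pp_0)=\sum_i\lambda_i(u)\wedge\lambda_{g+i}(v)$ from Proposition~\ref{chern}. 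No Green's function, no special sections, no base change.

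Your route, by contrast, has two soft spots. First, step~(ii) claims that an identity of $2$-forms may be tested after base change and that every fiberwise degree-zero line bundle can be reduced to $\oo(s_1-s_2)$; this is morally right (Jacobi inversion supplies such divisors locally, and a surjective \'etale or generically finite base change detects equality of smooth forms), but it needs to be said rather than asserted. Second, and more substantially, what you call ``the main obstacle'' is exactly where the content of the proposition lives, and you do not actually carry it out: you write down $\sum_{i,j}(-1)^{i+j}(\sigma_i,\tau_j)^*\omega_0^\natural$ and $\frac{\deldelbar}{\pi\sqrt{-1}}\log\frac{G(s_1,t_1)G(s_2,t_2)}{G(s_1,t_2)G(s_2,t_1)}$ and then say that ``feeding in'' (\ref{conditions}), (\ref{PhiG}) and the normalizations ``should collapse'' the former to $-\tfrac12$ times the latter. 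That collapse is precisely the statement that the pullback of $2\,\omega_0^\natural$ along the Abel--Jacobi map agrees (up to terms killed by the alternating sum) with $h$; this is essentially equivalent to what you are trying to prove, and neither the Stokes sketch nor the Gauss--Manin alternative is made precise enough to close the loop. The paper's Poincar\'e-bundle computation sidesteps this matching problem entirely.
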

For the proof of Proposition \ref{deligneflatform} it is convenient to recall the notion of Poincar\'e bundle for complex tori. Let $T=V/\Lambda$ be a complex torus, where $V$ is a finite dimensional complex vector space, and $\Lambda$ a lattice in $V$. The dual torus $\hat{T}$ is defined to be the complex torus $\hat{T}=\hat{V}/\hat{\Lambda}$ where $\hat{V}$ is the complex vector space of $\CC$-antilinear forms on $V$ and $\hat{\Lambda}\subset \hat{V}$ consists of those $l \in \hat{V}$ such that $\mathrm{Im} \, l(\Lambda) \subset \zz$. Via the map $l \mapsto \exp(2\pi \sqrt{-1} \, \mathrm{Im} \, l(\cdot))$ the dual torus $\hat{T}$ can be canonically interpreted as the moduli space $\mathrm{Hom}(\Lambda,U(1))$ of flat holomorphic line bundles on $T$ \cite[Proposition 2.4.1]{bl}.

In general \cite[Section 2.2]{bl}, holomorphic line bundles on a complex torus $T$ are determined by so-called Appell-Humbert data which are pairs $(H,\chi)$ consisting of a hermitian form $H$ on $V$ whose imaginary part $\mathrm{Im} \, H$ takes integer values on $\Lambda \times \Lambda$, together with a semicharacter $\chi$ for $H$, i.e. a map $\chi \colon \Lambda \to U(1)$ satisfying the relation $\chi(\lambda+\mu)=\chi(\lambda)\chi(\mu)\exp(\pi \sqrt{-1} \,\mathrm{Im} \, H(\lambda,\mu))$ for all $\lambda,\mu \in \Lambda$. The Poincar\'e bundle $P$ on $T \times \hat{T}$ is defined \cite[Section 2.5]{bl} to be the holomorphic line bundle on $T \times \hat{T}$ given by the Riemann form $((u,u'),(v,v')) \mapsto \overline{v'(u)} + u'(v)$ and associated semi-character $\chi(\lambda,\lambda')=\exp(\pi \sqrt{-1} \, \mathrm{Im} \, \lambda'(\lambda))$. Importantly, one way of viewing $P$ more intrinsically is as a universal flat holomorphic line bundle on $T$, in the following sense: for each $[\ll] \in \hat{T}$, the restriction of $P$ to $T \times \{ [\ll] \}$ is isomorphic to $\ll$ \cite[Section~2.5]{bl}.

As defined above, the Poincar\'e bundle $P$ is equipped with a rigidification at the origin $\epsilon \colon P(0) \isom \CC$. By \cite[Section 3.4]{bl} there is a unique hermitian metric $\|\cdot \|$ on $P$ such that the first Chern form of $(P,\|\cdot\|)$ is translation-invariant, and the rigidification $\epsilon \colon P(0) \isom \CC$ is an isometry, where $\CC$ is endowed with the standard euclidean metric. We will always view $P$ as endowed with this metric.

Now let $C$ be a compact Riemann surface of genus $g \geq 1$ and let $k$ be an integer. Consider the higher jacobian $J_k=\bigwedge^{2k+1} H_\rr/\bigwedge^{2k+1} H_\zz$. We recall that $J_k$ can be naturally viewed as a complex torus with tangent space at the origin identified with the complex vector space $V_k=(\oplus_{p+q=2k+1, p \geq q} H^{p,q})^*$, with $H^{p,q}$ the space of harmonic $(p,q)$-forms on $J_0=J$, the ordinary jacobian of $C$. We have a canonical hermitian form $H_k$ on the complex vector space $V_k$ induced by $M_k$, given by
\[ H_k(u,v)=M_k(\sqrt{-1}\,u,v)+\sqrt{-1}\,M_k(u,v)  \]
for all $u,v \in V_k$. In particular $M_k$ can be viewed as the imaginary part of $H_k$. Let $\hat{J}_k$ be the complex torus dual to $J_k$, and denote by $P_k$ the Poincar\'e bundle on $J_k \times \hat{J}_k$. There is a natural map $\mu_k \colon J_k \to \hat{J}_k$ induced from the linear map $V_k \to \hat{V}_k$ given by sending $u \in V_k$ to $H_k(u,\cdot) \in \hat{V}_k$. For example, in the case $k=0$, as $M_0$ is unimodular on $H_\zz$, the map $\mu_0$ is an isomorphism of complex tori, called the principal polarization of $J_0$. Via $\mu_0$ the jacobian $J_0$ can be canonically interpreted as the moduli space $\mathrm{Hom}(H_\zz,U(1))$ of flat holomorphic line bundles on $C$.

Denote the hermitian line bundle $(\mathrm{id},\mu_k)^*P$ on $J_k \times J_k$ by $P_{\mu,k}$. Let $(\ell_1,\ldots,\ell_{2g})$ be a symplectic basis of $H_\rr$, and let $(\lambda_1,\ldots,\lambda_{2g})$ be the dual basis of $H_\rr^*$.
\begin{lem} \label{c1_poinc1} We have the identity
$ c_1(P_{\mu,k}) = \sum_{1 \leq i_1 < \ldots < i_{2k+1} \leq 2g} \lambda_{i_1}(u) \wedge\ldots \wedge \lambda_{i_{2k+1}}(u) \wedge \lambda_{g+i_1}(v) \wedge \ldots \wedge \lambda_{g+i_{2k+1}}(v)  = 2\,\omega_k^\natural $ of $2$-forms on $J_k \times J_k$.
\end{lem}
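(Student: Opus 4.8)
The plan is to reduce the identity, via translation-invariance, to a linear-algebra computation on $\largewedge^{2k+1}H_\rr$. First I would note that both sides of the asserted equality are translation-invariant $2$-forms on $J_k\times J_k$: the Poincar\'e bundle $P_k$ carries by construction the (unique) hermitian metric whose first Chern form is translation-invariant, and since $\mu_k\colon J_k\to\hat J_k$ is a homomorphism of complex tori, $c_1(P_{\mu,k})=(\mathrm{id},\mu_k)^*c_1(P_k)$ is translation-invariant as well. Hence it suffices to determine the corresponding element of $\largewedge^2 T^*_0(J_k\times J_k)$, i.e. to evaluate $c_1(P_{\mu,k})$ on pairs of tangent vectors at the origin.

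Next I would invoke the standard description (see \cite[Chapters~2--3]{bl}) of the first Chern form of a holomorphic line bundle on a complex torus equipped with its canonical translation-invariant hermitian metric: up to the normalizing constant implicit in our convention $c_1(\ll)=\frac{\deldelbar}{2\pi\sqrt{-1}}\log\|s\|^2$, it is the translation-invariant real $2$-form attached to the imaginary part of the line bundle's hermitian (Appell--Humbert) form. For $P_k$ this hermitian form is $((u,u'),(v,v'))\mapsto\overline{v'(u)}+u'(v)$, so computing $c_1(P_{\mu,k})$ amounts to pulling it back along the $\rr$-linear map $V_k\oplus V_k\to V_k\oplus\hat V_k$ underlying $(\mathrm{id},\mu_k)$, namely $(a,b)\mapsto (a,H_k(b,\cdot))$, and taking the imaginary part. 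A short computation, using this description of $\mu_k$ together with the conjugate-symmetry $\overline{H_k(b',a)}=H_k(a,b')$, shows that the pulled-back hermitian form is $(a,b),(a',b')\mapsto H_k(a,b')+H_k(b,a')$ on $V_k\oplus V_k$; since $M_k=\mathrm{Im}\,H_k$, its imaginary part is the $2$-form sending $((a,b),(a',b'))$ to $M_k(a,b')+M_k(b,a')=M_k(a,b')-M_k(a',b)$. In particular this form is alternating and has no $p_1^*$- or $p_2^*$-pure component, as it must.

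It remains to identify this $2$-form with the displayed sum, and hence with $2\,\omega_k^\natural$. Working locally with a symplectic frame $(\ell_1,\ldots,\ell_{2g})$ of $H_\rr$ and its dual frame $(\lambda_1,\ldots,\lambda_{2g})$, Proposition \ref{explicit}, read as an identity of pairings on $\largewedge^{2k+1}H_\rr$, expresses $M_k$ in terms of the $\lambda$'s (with the convention $\lambda_{2g+j}=-\lambda_j$). Substituting this into $M_k(a,b')-M_k(a',b)$ and unwinding the wedge on $J_k\times J_k$ gives exactly $\sum_{1\leq i_1<\ldots<i_{2k+1}\leq 2g}\lambda_{i_1}(u)\wedge\ldots\wedge\lambda_{i_{2k+1}}(u)\wedge\lambda_{g+i_1}(v)\wedge\ldots\wedge\lambda_{g+i_{2k+1}}(v)$, which by the definition of $\omega_k^\natural$ recalled in Section \ref{prelims} equals $2\,\omega_k^\natural$.

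I expect the only genuinely delicate point to be the passage through \cite{bl} in the second step: pinning down the exact overall constant and sign relating $c_1(P_{\mu,k})$ to the imaginary part of the pulled-back hermitian form in the sign conventions used here, since Appell--Humbert normalizations and the sign of $\deldelbar$ differ across sources. Once that is settled, the verifications of translation-invariance, the pullback computation, and the symplectic bookkeeping are all routine.
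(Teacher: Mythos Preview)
Your proposal is correct and follows essentially the same route as the paper's proof: pull back the Appell--Humbert hermitian form of the Poincar\'e bundle along $(\mathrm{id},\mu_k)$, use that the first Chern form is the imaginary part of this form, and conclude since $M_k=\mathrm{Im}\,H_k$. The paper's argument is just a terser version of yours, omitting the explicit translation-invariance reduction and the final unwinding via Proposition~\ref{explicit}; your caveat about sign conventions in \cite{bl} is the only genuine point requiring care, and the paper simply asserts it.
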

\begin{proof} Recall that the map $\mu_k \colon J_k \to \hat{J}_k$ is given by sending $u $ to $H_k(u,\cdot)$. We find that the Riemann form of $P_{\mu,k}$ is given by sending $((u,u'),(v,v'))$ to $\overline{H_k(v',u)}+H_k(u',v)=H_k(u,v')+H_k(u',v)$. As the first Chern form of a holomorphic line bundle on a complex torus can be identified with the imaginary part of its Riemann form, we find the proposition.
\end{proof}
A slight variant is the Poincar\'e bundle $P_0$ on $C \times J$ \cite[Section~11.3]{bl}. This line bundle can be viewed as a universal flat holomorphic line bundle on $C$, that is, for each $[\ll] \in J$ the restriction of $P_0$ to $C \times [\ll]$ is isomorphic to $\ll$. Recall that we can view $(\lambda_1,\ldots,\lambda_{2g})$ as a basis of the complex vector space $\hh$ of harmonic $1$-forms on $C$. From Lemma \ref{c1_poinc1} and the proof of \cite[Proposition 11.3.2]{bl} we find the following.
\begin{lem} \label{c1_poinc2} The equality
$ c_1(P_0) = \sum_{i=1}^{2g} \lambda_i(u) \wedge \lambda_{g+i}(v) $
holds on $C \times J$.
\end{lem}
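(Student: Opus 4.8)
The plan is to deduce the formula for $c_1(P_0)$ on $C \times J$ from the already established formula for $c_1(P_{\mu,0})$ on $J \times J$ (the case $k=0$ of Lemma \ref{c1_poinc1}) by pulling back along an Abel-Jacobi embedding of $C$ into $J$.

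First I fix a point $x \in C$ and consider the Abel-Jacobi map $\phi = \phi_x \colon C \to J$, $y \mapsto [y-x]$. The structural input, which is essentially the content of (the proof of) \cite[Proposition 11.3.2]{bl}, is that the universal flat line bundle $P_0$ on $C \times J$ agrees with the pullback $(\phi \times \mathrm{id}_J)^* P_{\mu,0}$ of the bundle $P_{\mu,0} = (\mathrm{id},\mu_0)^* P$ on $J \times J$, possibly after twisting by a line bundle pulled back from the second factor $J$; with the usual rigidification of $P_0$ along $\{x\} \times J$ this discrepancy disappears, and in any event it contributes nothing to the ``mixed'' part of the Chern form, which is all that appears on the right-hand side. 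To see the identification, recall that $\phi$ induces the identity on first homology under the canonical identifications $H_1(C,\zz) = H_\zz = H_1(J,\zz)$, so the induced pullback on the moduli spaces $\mathrm{Hom}(H_\zz,U(1))$ of flat holomorphic line bundles is the identity; since $\mu_0$ is precisely the isomorphism $J \isom \hat J$ realizing $J$ as its own moduli space of flat line bundles, it follows that for every $[\ll] \in J$ the restriction of $(\phi \times \mathrm{id})^* P_{\mu,0}$ to $C \times \{[\ll]\}$ is isomorphic to $\ll$, which is the characterizing property of $P_0$.

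Granting this, the computation is a formal pullback of forms. Since $\phi$ induces the identity on $H^1$, the pullback $\phi^*\lambda_i$ of the harmonic $1$-form $\lambda_i$ on $J$ is the harmonic $1$-form on $C$ denoted by the same symbol $\lambda_i$, for $i = 1,\ldots,2g$ (keeping the convention $\lambda_{2g+j} = -\lambda_j$). Hence, applying Lemma \ref{c1_poinc1} with $k=0$,
\[ c_1(P_0) = (\phi \times \mathrm{id})^* c_1(P_{\mu,0}) = (\phi \times \mathrm{id})^* \sum_{i=1}^{2g} \lambda_i(u) \wedge \lambda_{g+i}(v) = \sum_{i=1}^{2g} \lambda_i(u) \wedge \lambda_{g+i}(v) \]
on $C \times J$, as desired.

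The step I expect to be the main obstacle is the identification $P_0 \cong (\phi \times \mathrm{id})^* P_{\mu,0}$: it rests on the autoduality of the jacobian and on matching the sign and normalization conventions for the polarization $\mu_0$ with those of \cite{bl}, so that no spurious sign is introduced. Once the universal property $P_0|_{C \times \{[\ll]\}} \cong \ll$ has been matched and the rigidification along $\{x\} \times J$ fixed, everything else is routine.
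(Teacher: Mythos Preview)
Your proposal is correct and follows essentially the same approach as the paper: the paper's proof is the one-liner ``From Lemma~\ref{c1_poinc1} and the proof of \cite[Proposition 11.3.2]{bl} we find the following,'' and you have spelled out exactly this, pulling back the $k=0$ case of Lemma~\ref{c1_poinc1} along $(\phi\times\mathrm{id})$ and invoking the identification of $P_0$ with $(\phi\times\mathrm{id})^*P_{\mu,0}$ supplied by \cite[Proposition 11.3.2]{bl}. Your handling of the possible twist from the second factor and of the pullback $\phi^*\lambda_i=\lambda_i$ is fine.
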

Let $\tt \to B$ be a complex torus bundle and $\hat{\tt}$ the corresponding family of dual tori. The product family $\tt \times_B \hat{\tt}$ carries a Poincar\'e bundle $\pp$, rigidified at the zero section. The fiberwise canonical metric defines a smooth hermitian metric on $\pp$. This metric on $\pp$ is translation-invariant in all fibers, and the rigidification of $\pp$ along the zero section is an isometry, where the trivial bundle $\CC \times B$ is equipped with the standard euclidean metric.

Let $p \colon \cc \to B$ be a family of compact Riemann surfaces, with associated 
jacobian bundle $q \colon \jj \to B$, viewed as a bundle of complex tori. Let $\hat{\jj}$ be the family of dual tori. By the above, the product family $\jj \times_B \hat{\jj}$ carries a canonically metrised Poincar\'e bundle $\pp$, rigidified at the zero section. Let $\mu \colon \jj \isom \hat{\jj}$ be the canonical principal polarization of $\jj$ coming from the intersection form $M \colon \hh_\rr \otimes \hh_\rr \to \rr$. Write $\pp_\mu = (\mathrm{id},\mu)^* \pp$ on $\jj \times_B \jj$, and write $\pp_0$ for the natural Poincar\'e bundle on $\cc \times_B \jj$. Working locally on $B$, let $(\ell_1,\ldots,\ell_{2g})$ be a symplectic frame of $\hh_\rr$, and $(\lambda_1,\ldots,\lambda_{2g})$ the dual frame of $\hh_\rr^*$.


Lemmas \ref{c1_poinc1} and \ref{c1_poinc2} have the following global counterpart \cite[Proposition 7.3]{hrar}.
\begin{prop} \label{chern} One has $ c_1(\pp_{\mu,k})= \sum_{1 \leq i_1 < \ldots < i_{2k+1} \leq 2g} \lambda_{i_1}(u) \wedge\ldots \wedge \lambda_{i_{2k+1}}(u) \wedge \lambda_{g+i_1}(v) \wedge \ldots \wedge \lambda_{g+i_{2k+1}}(v)  = 2\, \omega_k^\natural $ locally on $\jj_k \times_B \jj_k$.
Likewise, view $(\lambda_1,\ldots,\lambda_{2g})$ as a frame of relative harmonic forms on the fibers of $\cc \to B$. Then the identity $ c_1(\pp_0) = \sum_{i=1}^{2g} \lambda_i(u) \wedge \lambda_{g+i}(v) $ holds locally on $\cc \times_B \jj$.
\end{prop}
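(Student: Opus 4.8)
The proof I would give deduces the two displayed identities from their fibrewise counterparts, Lemmas~\ref{c1_poinc1} and~\ref{c1_poinc2}, once one knows that the first Chern forms $c_1(\pp_{\mu,k})$ and $c_1(\pp_0)$ are genuinely the \emph{parallel} forms on the right — i.e. have no components in base or mixed base/fibre directions — rather than merely agreeing with them fibrewise.

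The local system $\hh_\zz$ carries its flat Gauss--Manin connection, and hence so does every bundle built functorially from it; in particular $\jj_k\times_B\jj_k$ acquires a canonical horizontal distribution, so its tangent bundle splits as $\mathcal V_1\oplus\mathcal V_2\oplus\mathcal H$, where $\mathcal V_1,\mathcal V_2$ are the relative tangent bundles along the two $\jj_k$-factors (so $\mathcal V_i^*\cong\bigwedge^{2k+1}\hh_\rr^*$) and $\mathcal H$ is the pullback of $TB$, and $A^2(\jj_k\times_B\jj_k)$ decomposes into the corresponding six bidegree pieces. Since the canonical metric on $\pp$ is translation-invariant along the fibres of $\jj_k\times_B\hat\jj_k\to B$, the form $c_1(\pp_{\mu,k})=(\mathrm{id},\mu_k)^*c_1(\pp)$ and each of its bidegree pieces is fibrewise translation-invariant. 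Next I would note that the Riemann form of $\pp$ vanishes upon restriction to $\jj_k\times_B\{0\}$ and to $\{0\}\times_B\hat\jj_k$, and that the canonical metric is isometrically rigidified along the zero section; hence $\pp$ is isometrically trivial over each of these two sub-bundles, so $c_1(\pp_{\mu,k})$ pulls back to $0$ on $\jj_k\times_B\{0\}$ and on $\{0\}\times_B\jj_k$. Restricting a fibrewise translation-invariant closed $2$-form along $\jj_k\times_B\{0\}$ (tangent bundle $\mathcal V_1\oplus\mathcal H$) recovers precisely its $\mathcal V_1^*\wedge\mathcal V_1^*$, $\mathcal V_1^*\wedge\mathcal H^*$ and $\mathcal H^*\wedge\mathcal H^*$ pieces, and along $\{0\}\times_B\jj_k$ its $\mathcal V_2^*\wedge\mathcal V_2^*$, $\mathcal V_2^*\wedge\mathcal H^*$ and $\mathcal H^*\wedge\mathcal H^*$ pieces; using translation-invariance to spread the resulting vanishing over each fibre, all of these pieces vanish, so $c_1(\pp_{\mu,k})$ reduces to its $\mathcal V_1^*\wedge\mathcal V_2^*$ component.

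Such a component is a fibrewise translation-invariant section of $\bigwedge^{2k+1}\hh_\rr^*\otimes\bigwedge^{2k+1}\hh_\rr^*$, i.e. a section over $B$, and is therefore determined by its restrictions to the fibres $\jj_{k,b}\times\jj_{k,b}$; on each such fibre $c_1(\pp_{\mu,k})$ restricts to $c_1(P_{\mu,k})=2\,\omega_{k,b}^\natural$ by Lemma~\ref{c1_poinc1}. Hence $c_1(\pp_{\mu,k})=2\,\omega_k^\natural$ as forms, and writing this out in a local symplectic frame of $\hh_\rr$ gives the asserted sum (which, equalling the intrinsic form $2\,\omega_k^\natural$, is independent of the frame). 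The statement for $\pp_0$ on $\cc\times_B\jj$ follows in exactly the same way: the canonical metric on $\pp_0$ is translation-invariant along the $\jj$-fibres and isometrically trivial over $\cc\times_B\{0\}$, which kills every bidegree piece of $c_1(\pp_0)$ except the mixed one pairing relative harmonic $1$-forms in the $\cc$-direction with invariant $1$-forms in the $\jj$-direction; that piece is a section over $B$ of (relative harmonic $1$-forms)$\,\otimes\,\hh_\rr^*$, hence determined fibrewise, and there it is computed by Lemma~\ref{c1_poinc2} together with \cite[Proposition~11.3.2]{bl}.

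The crux is the second step: passing from the fibrewise identities, which are immediate from the cited lemmas, to identities of forms on the total spaces. This is exactly where the full normalization of the canonical metric on the Poincar\'e bundle — rigidification along \emph{both} zero sections — is needed; weakening it would only yield fibrewise agreement.
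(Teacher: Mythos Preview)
The paper does not prove Proposition~\ref{chern} itself; it simply records it as the global counterpart of Lemmas~\ref{c1_poinc1} and~\ref{c1_poinc2} and cites \cite[Proposition~7.3]{hrar}. Your proposal therefore supplies an argument where the paper offers none, and for $\pp_{\mu,k}$ it is correct: the Gauss--Manin flat structure on $\jj_k\times_B\jj_k$ yields a bidegree decomposition preserved by fibrewise translation, the isometric triviality of the Poincar\'e bundle along each partial zero section forces all components of $c_1(\pp_{\mu,k})$ other than the one in $\mathcal V_1^*\wedge\mathcal V_2^*$ to vanish, and that surviving piece is translation-invariant and hence determined fibrewise by Lemma~\ref{c1_poinc1}.

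For $\pp_0$ there is one wrinkle you pass over. Since $\cc\to B$ carries no canonical flat connection and no zero section, only the rigidification along $\cc\times_B\{0\}$ is available; your argument then kills the pure $\pi_\cc^*T^*\cc\wedge\pi_\cc^*T^*\cc$ piece, but the surviving mixed component lies in $\mathcal V_\jj^*\otimes\pi_\cc^*T^*\cc$, and $T^*\cc$ still contains base directions not visible upon restriction to a fibre $C_b\times J_b$. So Lemma~\ref{c1_poinc2} alone does not pin it down. This gap is easily closed, for instance by pulling $\pp_0$ back from $\pp_{\mu,0}$ on $\jj\times_B\jj$ via a local Abel--Jacobi map (which is how \cite[Section~11.3]{bl} constructs $P_0$ fibrewise) and invoking the $\pp_{\mu,0}$ case already established; the base-direction ambiguity then disappears because on $\jj\times_B\jj$ both zero-section rigidifications are present.
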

\begin{cor} \label{integral} The $2$-forms $2\,\omega_k^\natural$ and $2 \,\omega_k$ have integral cohomology class.
\end{cor}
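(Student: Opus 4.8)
The plan is to deduce this corollary directly from Proposition~\ref{chern}, with essentially no further work. The key point is that $\pp_{\mu,k}$ is a genuine holomorphic line bundle on $\jj_k \times_B \jj_k$ --- it is the pullback $(\mathrm{id},\mu_k)^*\pp$ of the Poincar\'e bundle along a holomorphic map --- equipped with a $C^\infty$-hermitian metric, and hence its first Chern form is a smooth closed $2$-form whose de Rham class lies in the image of $H^2(\jj_k \times_B \jj_k,\zz) \to H^2(\jj_k \times_B \jj_k,\rr)$. Proposition~\ref{chern} identifies this first Chern form with $2\,\omega_k^\natural$; although the explicit formula in terms of a symplectic frame $(\lambda_1,\ldots,\lambda_{2g})$ of $\hh_\rr$ is only local, both $c_1(\pp_{\mu,k})$ and $2\,\omega_k^\natural$ are globally well-defined $2$-forms and the asserted identity holds globally. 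This immediately yields the integrality of the cohomology class of $2\,\omega_k^\natural$.

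For the statement about $2\,\omega_k$ I would then restrict along the diagonal embedding $\jj_k \hookrightarrow \jj_k \times_B \jj_k$. As recorded just after Proposition~\ref{explicit} (and already in Section~\ref{zero}), the pullback of $\omega_k^\natural$ along the diagonal is exactly $\omega_k$; since pullback in cohomology preserves integrality, $2\,\omega_k$ also has integral de Rham class. Equivalently, one may pull back the hermitian line bundle $\pp_{\mu,k}$ itself along the diagonal: the resulting hermitian holomorphic line bundle has first Chern form $\Delta^*(2\,\omega_k^\natural) = 2\,\omega_k$, which is therefore integral.

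I do not expect a genuine obstacle here: the content is entirely contained in Proposition~\ref{chern} together with the standard fact that the first Chern form of a hermitian holomorphic line bundle represents an integral class. The only point requiring a little care is the bookkeeping that the frame-dependent expression in Proposition~\ref{chern} nevertheless expresses an equality of globally defined forms, so that the integrality conclusion is global rather than merely fiberwise; but this is already built into the way $\omega_k$ and $\omega_k^\natural$ were constructed in Section~\ref{prelims}.
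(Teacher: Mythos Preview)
Your proposal is correct and is precisely the argument the paper has in mind: the corollary is stated without proof immediately after Proposition~\ref{chern}, and the intended justification is exactly that $2\,\omega_k^\natural = c_1(\pp_{\mu,k})$ is the first Chern form of a hermitian line bundle, hence integral, with the statement for $2\,\omega_k$ following by restriction to the diagonal.
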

\begin{proof}[Proof of Proposition \ref{deligneflatform}] The second equality follows directly from the first one as the form $\omega_0^\natural$ restricts to the form $\omega_0$ on the diagonal. In order to prove the first equality, we observe that there is a natural isometry of hermitian line bundles $\langle \ll,\mm \rangle \isom ([\ll],[\mm])^*  \dd$, where $\dd$ is the ``universal Deligne pairing'' on $\jj \times_B \jj$, obtained as follows: let $p_{12},p_{13} \colon \cc \times_B \jj \times_B \jj \to \cc \times_B \jj$ be the projections on the first and second coordinate resp.\! the first and third coordinate. Let $\pp_0$ be the Poincar\'e bundle on $\cc \times_B \jj$. Then the universal Deligne pairing is given as
$ \dd = \langle p_{12}^* \pp_0 , p_{13}^* \pp_0 \rangle$, where the pairing is taken along the projection $p_{23} \colon \cc \times_B \jj \times_B \jj \to \jj \times_B \jj$ on the second and third coordinate. 

We are done once we prove that $c_1(\dd)=-2\,\omega_0^\natural$. We calculate, using Propositions \ref{c1deligne} and \ref{chern}, viewing the $\lambda_i(u)$ as relative harmonic $1$-forms on the family $p_{23}$ of Riemann surfaces,
\[ \begin{split}
c_1(\dd) &= \int_{p_{23}} \left( \sum_{i=1}^{2g} \lambda_i(u) \wedge \lambda_{g+i}(v) \right) \wedge \left( \sum_{i=1}^{2g} \lambda_i(u) \wedge \lambda_{g+i}(w) \right) \\
& = \int_{p_{23}} \sum_{i=1}^{2g} \lambda_i(u) \wedge \lambda_{g+i}(v) \wedge \lambda_{g+i}(u) \wedge \lambda_{2g+i}(w) \\
& = - \sum_{i=1}^{2g} \int_{p_{23}} \lambda_i(u) \wedge \lambda_{g+i}(u) \wedge \lambda_{g+i}(v) \wedge \lambda_{2g+i}(w) \\
& = - \sum_{i=1}^{2g} \lambda_i(v) \wedge \lambda_{g+i}(w) \\
&= -2\, \omega_0^\natural \, ,
\end{split} \]
and we are done.
\end{proof}
\begin{remark}  One can actually prove a canonical isomorphism
$ \dd \isom \pp_\lambda^{\otimes -1} $
of hermitian line bundles on $\jj \times_B \jj$.
\end{remark}

\section{Proof of identities (K1) and (K3)} \label{K1K3}

In this section we give proofs of identities (K1) and (K3). A proof of identity (K2) will be given in Section~\ref{K2}. We will make extensive use of the Deligne pairing, and especially Proposition \ref{deligneflatform}, in the various situations at hand.

Let $\jj$ be the universal jacobian over $\mm_g$. Recall that identity (K1) deals with the map $ \kappa \colon \cc_g \to \jj$ given by sending a pointed surface $(C,x)$ to $(J(C),[(2g-2)x-K_C])$, where $J(C)$ denotes the jacobain of $C$ and $K_C$ is a canonical divisor on $C$. We abbreviate the relative tangent bundle of $p \colon \cc_g \to \mm_g$ by $T$.
\begin{proof}[Proof of equality (K1)]
Denote by $\Delta$ the diagonal of $\cc_g^2$. From Proposition \ref{deligneflatform} we obtain the equality
\[ 2\,\kappa^* \omega_0 = -c_1\left(\langle \oo((2g-2)\Delta) \otimes p_2^*T, \oo((2g-2)\Delta) \otimes p_2^* T \rangle \right) \]
of $2$-forms on $\cc_g$. We expand the right hand side using the biadditivity of the Deligne pairing. First of all we have
\[ c_1(\langle \oo(\Delta), \oo(\Delta) \rangle ) = \int_{p_1} h^2 = e^A \]
by Proposition \ref{eA_as_fiberintegral}. Next we have
\[ c_1(\langle \oo(\Delta),p_2^*T \rangle)) = c_1(\langle p_2^*T,\oo(\Delta) \rangle))= c_1(T)=e^A \quad \mathrm{and} \quad c_1(\langle p_2^*T,p_2^*T \rangle) = e_1^A \, . \]
By combining we obtain
\[  2\,\kappa^* \omega_0 = -2g(2g-2) \, e^A - e_1^A \, , \]
which is the required identity.
\end{proof}
Recall that identity (K3) deals with the map $\delta \colon \cc_g^2 \to \jj$   given by sending a $2$-pointed Riemann surface $(C,x,y)$ to $(J(C),[y-x])$.
\begin{proof}[Proof of equality (K3)]
Let $x,y \colon \cc_g^2 \to \cc_g^3$ be the two canonical sections of the universal Riemann surface $p_{12} \colon \cc_g^3 \to \cc_g^2$ over $\cc_g^2$.   From Proposition \ref{deligneflatform} we obtain the equality
\[ 2 \,\delta^* \omega_0 = -c_1 \left( \langle \oo(y-x),\oo(y-x) \rangle \right) \]
of $2$-forms on $\cc_g^2$. Again we expand the right hand side using the biadditivity of the Deligne pairing. From equation (\ref{Deligne}) we obtain that
\[ c_1(\langle \oo(x),\oo(x) \rangle) = p_1^*e^A \quad \mathrm{and} \quad  c_1(\langle \oo(y),\oo(y) \rangle) = p_2^*e^A \, . \]
Further we have an isometry $ \langle \oo(x),\oo(y) \rangle \isom \oo(\Delta) $
where $\Delta$ is the diagonal on $\cc_g^2$, so that $ c_1(\langle \oo(x),\oo(y) \rangle ) = h$. By combining we obtain
\[ 2 \,\delta^* \omega_0 = 2 \,h - p_1^* e^A - p_2^* e^A \, , \]
which is the required identity.
\end{proof}
\begin{remark} \label{eAwithout} Using a variant of the above arguments one can deduce from (K3) the identity
\[ 4 \int_{p_1} (\delta^* \omega_0)^2 = \int_{p_1} (2\,h - p_1^*e^A - p_2^*e^A)^2 = -4g \, e^A + e_1^A \, . \]
Combining this identity with (K1), we find the expression
\[ e^A = -\frac{1}{4g^2}\left( 4 \int_{p_1} (\delta^* \omega_0)^2 + 2\kappa^*\omega_0 \right) \]
for $e^A$. In particular, one can write $e^A$ in terms of $\delta, \kappa$ and $\omega_0$ only, without mentioning the Arakelov-Green's function $G \colon \cc_g^2 \to \rr$.
\end{remark}

\section{Pointed harmonic volume} \label{pointed}

In this section we recall the Harris-Pulte harmonic volume \cite{harris} \cite{pu} of a pointed Riemann surface. Let $(C,x)$ be a pointed compact Riemann surface of genus $g \geq 1$ and as above write $H_\zz = H_1(C,\zz)$ and $H_\rr=H_\zz \otimes \rr$. Let $H^*$ denote the Hodge structure dual to $H_\zz$; recall that $H^*_\CC = H^* \otimes \CC$ is naturally identified with the space of harmonic $1$-forms $\hh$ on $C$. The first intermediate jacobian $J_1 = \largewedge^3 H_\rr/ \largewedge^3 H_\zz \simeq (\largewedge^3 H_\zz ) \otimes (\rr/\zz)$ can alternatively be written as $J_1\simeq\mathrm{Hom}(\largewedge^3 H^*, \rr/\zz)$.

Denote by $K$ the kernel of the canonical map $M^* \colon H^*  \otimes H^* \to \zz$ induced by the intersection pairing $M$. The harmonic volume  of $(C,x)$ is defined to be the element $I(x)$ of the torus $J' = \mathrm{Hom}(K \otimes H^*,\rr/\zz)$ that as a functional on $K \otimes H^*$ sends the element $\sum_i \left(\varphi_1^{(i)} \otimes \varphi_2^{(i)} \right) \otimes \varphi_3$ to the real number $\int_{\gamma_3} \left( \sum_i \tilde{\varphi}_1^{(i)} \tilde{\varphi}_2^{(i)} + \eta \right)$ modulo $\zz$, where $\tilde{\varphi}_j^{(i)}$ is the harmonic $1$-form representing $\varphi_j^{(i)}$, $\eta$ is a (unique) $1$-form orthogonal to all closed $1$-forms such that $\sum_i \varphi_1^{(i)} \wedge \varphi_2^{(i)} + d\eta = 0$ holds, and $\gamma_3$ is a loop with base point $x$ whose class in $H_1(C,\zz)$ is the Poincar\'e dual of $\varphi_3$. Here, the integral $\int_{\gamma_3} \tilde{\varphi}_1^{(i)} \tilde{\varphi}_2^{(i)}$ is an iterated integral in the sense of K.T. Chen \cite{chen}. Varying the point $x$ on $C$, one obtains a natural $C^\infty$ map $I \colon C \to J'$. 

Using a symplectic basis one checks \cite[Lemma 4.7]{pu} that the wedge product $K \otimes H^* \to \largewedge^3 H^*$ is surjective. In particular, we have a natural injective map $J_1 \rightarrowtail J'$. As it turns out \cite{pu}, the pointed harmonic volume $I(x) \in J'$ of $(C,x)$ in fact lies in $J_1$.

A fundamental result is then the following.
\begin{prop} \label{pulte} Let $\gamma \colon C \to J_1$ be the map associating to $x \in C$ the Griffiths Abel-Jacobi class of the Ceresa cycle $C_x - C_x^{-}$, and let $I \colon C \to J_1 \subset J'$ be Harris-Pulte's pointed harmonic volume. Then the identity $\gamma =2\,I$ holds.
\end{prop}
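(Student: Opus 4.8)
The plan is to establish the pointwise identity $\gamma(x) = 2\,I(x)$ in $J_1$ for every $x \in C$. By the discussion in Section \ref{zero}, $J_1$ is canonically the quotient $\big(H^{3,0}(J) \oplus H^{2,1}(J)\big)^{*}/H_3(J,\zz)$, with $H_3(J,\zz) \cong \largewedge^3 H_\zz$ embedded by integration. Via the canonical isomorphism $H^1(J) \isom H^1(C)$ one has $H^{3,0}(J) = \largewedge^3 H^{1,0}$ and $H^{2,1}(J) = \largewedge^2 H^{1,0} \otimes H^{0,1}$, so it suffices to fix $x$ and to check that $\gamma(x)$ and $2\,I(x)$, regarded as functionals on $H^{3,0}(J) \oplus H^{2,1}(J)$, take the same value modulo the periods $H_3(J,\zz)$ on every decomposable class $\varphi_1 \wedge \varphi_2 \wedge \varphi_3$ with $\varphi_1,\varphi_2,\varphi_3$ harmonic $1$-forms on $C$ at most one of which is anti-holomorphic. (Recall $I(x) \in J_1$, as established above, so this comparison is meaningful.)

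I would evaluate both sides by iterated integrals. On the harmonic volume side, the value of $I(x)$ on $\varphi_1 \wedge \varphi_2 \wedge \varphi_3 \in \largewedge^3 H^{*}$ is read off from the defining functional on $K \otimes H^{*}$ by lifting $\varphi_1 \wedge \varphi_2 \wedge \varphi_3$ along the surjection $K \otimes H^{*} \twoheadrightarrow \largewedge^3 H^{*}$ of \cite[Lemma 4.7]{pu} and antisymmetrising over $S_3$; the result is an alternating combination of Chen iterated integrals $\int_\gamma \tilde\varphi_i\,\tilde\varphi_j$ \cite{chen} over loops $\gamma$ based at $x$, corrected by the contribution of the $1$-form $\eta$ for which $\sum_i \tilde\varphi_1^{(i)} \wedge \tilde\varphi_2^{(i)} + d\eta = 0$. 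On the Ceresa side one has to evaluate the Griffiths Abel-Jacobi functional $\int_{\tilde\Gamma}(\cdot)$ of the cycle $C_x - C_x^{-}$, for a singular $3$-chain $\tilde\Gamma$ with $\partial\tilde\Gamma = C_x - C_x^{-}$. Writing $\sigma\colon C\to J$, $\sigma(y) = [y-x]$, and $\sigma^{-} = [-1]\circ\sigma$, one has $C_x = \sigma_{*}[C]$ and $C_x^{-} = \sigma^{-}_{*}[C]$; these two $2$-cycles are homologous in $J$ because $[-1]_{*}$ acts as the identity on $H_2(J,\zz)$, but $\sigma$ and $\sigma^{-}$ are \emph{not} homotopic (they induce $+\mathrm{id}$ and $-\mathrm{id}$ respectively on $H_1$), so no naive homotopy yields $\tilde\Gamma$; nevertheless such a chain exists, and the evaluation of its periods --- carried out by Harris \cite{harris} and Pulte \cite{pu} via iterated integrals on $C$ --- again yields an alternating combination of iterated integrals of the forms $\sigma^{*}\varphi_i$ over loops based at $x$. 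The factor $2$ is visible heuristically: if $\Sigma$ were a $3$-chain with $\partial\Sigma = C_x$, then $C_x - C_x^{-} = \partial\big(\Sigma - [-1]_{*}\Sigma\big)$, and, since $[-1]^{*}$ acts by $(-1)^3 = -1$ on $H^3(J)$, one would get $\int_{\Sigma - [-1]_{*}\Sigma}\omega = 2\int_\Sigma\omega$; the pointed harmonic volume is precisely the invariant playing the role of this (formal) ``half-period'' $\int_\Sigma\omega$, which is well defined even though no such $\Sigma$ exists.

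The step I expect to be the main obstacle is the precise matching of the two alternating iterated-integral expressions, and in particular the reconciliation of the correction term $\eta$ in the definition of $I(x)$ with the boundary terms produced by Stokes' theorem on $\tilde\Gamma$; this is exactly where the constraints $\sum_i \varphi_1^{(i)} \otimes \varphi_2^{(i)} \in K$ and $\sum_i \tilde\varphi_1^{(i)} \wedge \tilde\varphi_2^{(i)} + d\eta = 0$ are genuinely used, and where the normalisation $\gamma = 2\,I$ must be fixed. This computation is due to Hain and Pulte \cite{pu} (building on \cite{harris}), and I would invoke it. Equivalently, and more conceptually, the statement can be recast as the assertion that the two natural extension classes in the category of mixed Hodge structures attached to the pointed surface $(C,x)$ --- the one carried by the group ring $\zz[\pi_1(C,x)]$ modulo the third power of its augmentation ideal, which $I(x)$ computes, and the one carried by the $1$-motive of the Ceresa cycle in $J$, which $\gamma(x)$ computes --- agree up to the factor $2$; this is, in essence, the point of view underlying \cite{pu}.
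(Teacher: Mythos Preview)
Your proposal and the paper take the same route: the paper's proof is a one-line citation, ``See \cite[Theorem 4.10]{pu}'', and you likewise conclude by invoking the Hain--Pulte computation from \cite{pu} (and \cite{harris}). Your sketch of the ingredients --- the iterated-integral description of both sides, the heuristic for the factor $2$ via $[-1]^*$ acting by $-1$ on $H^3(J)$, and the mixed Hodge structure reformulation --- is a fair summary of what lies behind that citation, but it is not an independent argument, and you are candid about this.
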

\begin{proof} See \cite[Theorem 4.10]{pu}.
\end{proof}
We note that by wedging with the dual of the symplectic form in $\largewedge^2 H^*$, we obtain a natural torus embedding $i \colon J \rightarrowtail J_1$, where $J=J_0$ is the ordinary jacobian of $C$. The results in \cite{harris} \cite{pu} show that for two points $x,y \in C$, the difference $I(x)-I(y)$ lies in $J$, and can be identified with the Abel-Jacobi class of the divisor $y-x$. In particular, taking the image under the quotient mapping $J_1 \to J_1/J$ yields an element $\bar{I}$ of $J_1/J$ which is independent of the base point $x$.

There is a natural contraction map $c \colon \largewedge^3 H \to H$ given by sending $a \wedge b \wedge c \in \largewedge^3 H$ to $M(a,b)c + M(b,c)a + M(c,a)b$. This induces a natural map $c \colon J_1 \to J$. For instance, it is not difficult to see that $ci = (g-1)\mathrm{id}_J$. Let $\kappa \colon C \to J$ be the map given by $x \mapsto [(2g-2)x-K_C]$, where $K_C$ is a canonical divisor on $C$. 
\begin{prop} \label{cgamma} The identity $c\gamma=\kappa$ holds.
\end{prop}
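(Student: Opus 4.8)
The plan is to reduce the claimed identity $c\gamma = \kappa$ of maps $C \to J$ to the previously established identity $\gamma = 2I$ of Proposition~\ref{pulte} together with known formulae for the pointed harmonic volume. First I would record that both $c\gamma$ and $\kappa$ are $C^\infty$ maps $C \to J$, so it suffices to compare their values at a single point and the induced maps on homotopy groups (equivalently, their first variations), or—more efficiently—to compute $c\gamma$ directly from the functional description of the Griffiths Abel–Jacobi class of the Ceresa cycle. Using $\gamma = 2I$ and the explicit description of $I(x)$ as an iterated-integral functional on $K \otimes H^*$, together with the fact that the contraction $c\colon \bigwedge^3 H \to H$ is adjoint (up to the polarization) to wedging with the symplectic form, the image $c\gamma(x) \in J = \mathrm{Hom}(H^*, \rr/\zz)$ is the functional sending $\varphi_3 \in H^*$ to $2$ times the value of $I(x)$ on $\sum_i(\ell_i^* \wedge \ell_{g+i}^*) \otimes \varphi_3$, where $\sum_i \ell_i^* \wedge \ell_{g+i}^*$ is the dual symplectic form; this is an iterated integral $\sum_i \int_{\gamma_3}(\tilde\ell_i^* \tilde\ell_{g+i}^* - \tilde\ell_{g+i}^* \tilde\ell_i^*) + \cdots$ around a loop $\gamma_3$ Poincaré-dual to $\varphi_3$.

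Next I would identify this iterated-integral expression with the Abel–Jacobi class of $(2g-2)x - K_C$. The key geometric input is Pulte's computation (in \cite{pu}, and going back to Harris \cite{harris}) relating the ``trace'' of the harmonic volume under contraction to the canonical class: under the contraction $c\colon J_1 \to J$ the pointed harmonic volume $I$ maps, after the doubling $\gamma = 2I$, precisely to the class of $(2g-2)x - K_C$. Concretely, the sum of iterated integrals $\sum_i \int_{\gamma_3} \tilde\ell_i^* \tilde\ell_{g+i}^*$ computes a period of a multivalued primitive of the canonical form, and Riemann's bilinear relations (or the classical theory of the Jacobi inversion applied to the canonical divisor) evaluate this to the Abel–Jacobi integral $\int_{\gamma_3}$ of the logarithmic derivative attached to $K_C - (2g-2)x$. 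An alternative, cleaner route: differentiate. Both sides are $C^\infty$ maps to $J$; their first variations $d(c\gamma)$ and $d\kappa$ lie in $A^1(C, \hh_\rr)$, and by \cite[Section~4.1]{hain_normal} a map to a torus is determined by its value at one point plus its class in $H^1(C,\hh_\zz)$. The first variation of $\kappa$ is the well-known $1$-form $(2g-2)\,\omega_{(1)} - (\text{variation of } K_C)$, and the first variation of $I$ is the first extended Johnson homomorphism term, whose contraction is computed in \cite{extension}; matching these uses only the definition of $c$ and the identity $ci = (g-1)\mathrm{id}_J$ already noted.

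I expect the main obstacle to be bookkeeping: correctly carrying the polarization identifications $H \cong H^*$, $J \cong \hat J$, and $J_1 \hookleftarrow J' $, and the factor of $2$ from $\gamma = 2I$, through the contraction map $c$, so that no spurious sign or factor appears. Once those identifications are pinned down—using the explicit symplectic frame $(\ell_1,\dots,\ell_{2g})$ and its dual $(\lambda_1,\dots,\lambda_{2g})$ as in Proposition~\ref{explicit}—the computation is essentially a repackaging of the iterated-integral formula for $I$ and the classical identity that the canonical class is the trace of the theta divisor's monodromy. Being careful that the base-point $x$ enters $\kappa$ only through $(2g-2)x$ while $I(x)$'s dependence on $x$ also produces exactly the $(2g-2)x$ term (the $K_C$ part being the base-point-independent ``constant'' captured by $\bar I \in J_1/J$) is the delicate point; after that the proposition follows by comparing the two functionals on $H^*$ directly.
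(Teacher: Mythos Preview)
The paper's own ``proof'' of this proposition is a one-line citation: \emph{See \cite[Corollary~6.7]{pu}}. So there is no argument in the paper to compare against; the result is imported wholesale from Pulte.

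Your proposal is not so much a different proof as an outline of how one might reconstruct Pulte's argument. The strategy---reduce to $\gamma = 2I$, interpret the contraction $c$ as adjoint to wedging with the symplectic form, and then recognise the resulting ``trace'' of the iterated-integral functional as the Abel--Jacobi image of $(2g-2)x - K_C$---is indeed the shape of the computation in \cite{pu}. But the decisive step, namely the evaluation of $\sum_i \int_{\gamma_3} (\tilde\ell_i^* \tilde\ell_{g+i}^* - \tilde\ell_{g+i}^* \tilde\ell_i^*)$ and its identification with the period of $(2g-2)x - K_C$, is not actually carried out in your plan: you defer it either to ``Pulte's computation'' or to ``Riemann's bilinear relations'' without specifying which identity is being invoked or how the canonical divisor enters. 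The alternative route you sketch (compare first variations and use that a torus map is determined by one value plus its class in $H^1$) is also plausible but again stops short of the computation that would pin down the constant term $K_C$. In short, your plan correctly locates where the content lies but does not supply it, so in the end it amounts to the same citation the paper makes, dressed with more context.
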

\begin{proof} See \cite[Corollary 6.7]{pu}.
\end{proof}
As usual, the above constructions and results can be globalized over the universal family of Riemann surfaces $\cc_g$. Let $\jj_1$ be the first intermediate jacobian fibration over $\cc_g$ as in Section \ref{prelims}. Varying the pointed Riemann surface $(C,x)$, the pointed harmonic volume gives a natural section $I \colon \cc_g \to \jj_1$. The first variation $dI \in A^1(\cc_g,\largewedge^3 \hh_\rr)$ is a twisted $1$-form that represents the extended first Johnson homomorphism on the mapping class group of a pointed oriented surface \cite{magnus} \cite{extension}. 

Let $\gamma \colon \cc_g \to \jj_1$ be the section of $\jj_1$ obtained by applying Griffiths's Abel-Jacobi construction to the universal Ceresa cycle $\ZZ$ inside the universal Jacobian bundle $q \colon \jj \to \cc_g$. We obtain from Proposition \ref{pulte} that $\gamma = 2\,I$. Further, the contraction $c$ introduced above globalizes to give a natural map of torus bundles $c \colon \jj_1 \to \jj$. Let $\kappa \colon \cc_g \to \jj$ be the usual map given by sending $(C,x)$ to $(J(C),[(2g-2)x-K_C])$, then Proposition \ref{cgamma} implies that $c\gamma=\kappa$ as maps from $\cc_g $ to $\jj$. 


\section{Proof of identity (K2)} \label{K2}

Let $q \colon \jj \to \cc_g$ denote the universal Jacobian bundle over $\cc_g$. We continue with the notation from Section \ref{pointed}. Put $\psi = 2 \,\omega_0^\natural$ on $\jj \times \jj$. Here and below, products are fiber products over $\cc_g$. Working locally on $\cc_g$, let $(\ell_1,\ldots,\ell_{2g})$ be a symplectic frame of $\hh_\rr$, and $(\lambda_1,\ldots,\lambda_{2g})$ the dual frame of $\hh_\rr^*$. Then by Proposition \ref{chern} we locally have the equality $ \psi = \sum_{i=1}^{2g} \lambda_i(u) \wedge \lambda_{g+i}(v)$, where we still use the convention $\lambda_{2g+i}=-\lambda_i$ for $i=1,\ldots,g$. The $6$-form $\psi^3$ is a multiple of $\omega^\natural_1$; more precisely we have, computing locally,
\begin{equation} \label{psi^3} \begin{split}
\psi^3 &=  6 \sum_{1 \leq i <j<k \leq 2g} \lambda_i(u) \wedge\lambda_{g+i}(v) \wedge \lambda_j(u) \wedge \lambda_{g+j}(v) \wedge \lambda_k(u) \wedge \lambda_{g+k}(v) \\
 & = -6 \sum_{1 \leq i <j<k\leq 2g} \lambda_i(u) \wedge \lambda_j(u)\wedge\lambda_k(u) \wedge \lambda_{g+i}(v) \wedge \lambda_{g+j}(v) \wedge \lambda_{g+k}(v) \\
 & = - 12 \, \omega^\natural_1 \, .
\end{split}
\end{equation}
Let $Q \colon \jj \times \jj \to \cc_g$ be the structure map. Denote by $\cc$ the universal family of Riemann surfaces embedded in $\jj$ via the universal Abel-Jacobi map $\delta \colon \cc_g^2 \to \jj$.
Let $x,y,z \colon \cc_g^3 \to \cc_g^4$ be the canonical sections of the universal Riemann surface $p_{123} \colon \cc_g^4 \to \cc_g^3$ over $\cc_g^3$.
Put
\[ \delta_1 \colon \cc_g^3 \to \jj \times \jj \, , \quad (C,(x,y,z)) \mapsto (J(C),[y-x],[z-x]) \, . \]
Then $\delta_1$ is an embedding over $\cc_g$ and the image of $\delta_1$ in $\jj \times \jj$ equals $\cc \times \cc$. A small computation yields that for forms $\eta$ on $\jj \times \jj$ invariant under inversion the identity
\begin{equation} \label{times4} \int_Q \eta \, \delta_{\ZZ \times \ZZ} = 4 \int_Q \eta \, \delta_{\cc \times \cc}
\end{equation}
holds. Let $p_{(1)} \colon \cc_g^3 \to \cc_g$ be the projection on the first coordinate.
We calculate
\begin{eqnarray*} 48\,I^*\omega_1 = 48 \, M_1\,(\d I)^{\otimes 2} = 12\,M_1\,(\d \gamma)^{\otimes 2} && \textrm{by Proposition \ref{pulte}} \\
  = 12 \int_Q \omega_1^\natural \wedge \delta_{\ZZ \times \ZZ} \, && \textrm{by Proposition \ref{fiberint}} \\
  = - \int_Q \psi^3 \wedge \delta_{\ZZ \times \ZZ} \, && \textrm{by equation (\ref{psi^3})} \\
  = - 4 \int_Q \psi^3 \wedge \delta_{\cc \times \cc} && \textrm{by equation (\ref{times4})}  \\
  = - 4 \int_{p_{(1)}} (\delta_1^*\psi)^3 \quad && \textrm{by pullback.}  \\
\end{eqnarray*}
This leads to the fiber integral expression
\begin{equation} \label{end} 12\,I^*\omega_1 = - \int_{p_{(1)}} (\delta_1^* \psi)^3 \, .
\end{equation}
\begin{proof}[Proof of equality (K2)]
Let $F \colon \cc_g^2 \to \mm_g$ be the canonical map. By Proposition \ref{a_gfiberint} we have
\[ e_1^A - 2\sqrt{-1} \,\deldelbar \, a_g = \int_F h^3 \, . \]
By equation (\ref{end}) the proof of (K2) would be finished once we prove that
\begin{equation} \label{todo} - \int_{p_{(1)}} (\delta_1^* \psi)^3 = -6g \, e^A + \int_F h^3 \, . 
\end{equation}
We will prove this identity by an explicit calculation, inspired by the proof of Theorem 2.5.1 in \cite{zh}. First, by Proposition \ref{deligneflatform} we have an equality
\[ -\delta_1^* \psi = - 2 \, \delta_1^* \omega_0^\natural =  c_1( \langle \oo(y-x),\oo(z-x) \rangle) \]
of $2$-forms on $\cc_g^3$. Next, let $p_{ij} \colon \cc_g^3 \to \cc_g^2$ for $1\leq i < j \leq 3$ be the projection on the $i$-th and $j$-th coordinate. Let $\Delta$ be the diagonal on $\cc_g^2$. Then by biadditivity of the Deligne pairing we have an isometry
\[ \langle \oo(y-x),\oo(z-x) \rangle \isom p_{23}^* \oo(\Delta) \otimes p_{12}^* \oo(-\Delta) \otimes p_{13}^* \oo(-\Delta) \otimes p_{(1)}^* (\langle \Delta, \Delta \rangle) \]
and hence, upon taking first Chern forms on left and right hand side, the equality 
\[ -\delta_1^* \psi = p_{23}^* h - p_{12}^* h - p_{13}^*h + p_{(1)}^* e^A  \]
by Proposition \ref{eA_as_fiberintegral}. By Lemma \ref{selfinters} we have
\[ \int_{p_{(1)}} (p_{23}^*h - p_{12}^*h - p_{13}^*h)^2 = -2g \]
and this gives
\[ \begin{split}
-\int_{p_{(1)}} (\delta_1^* \psi)^3 = &  \int_{p_{(1)}} (p_{23}^* h - p_{12}^* h - p_{13}^*h + p_{(1)}^* e^A)^3 \\
 = & \int_{p_{(1)}} (p_{23}^*h - p_{12}^*h - p_{13}^*h)^3 + 3 \, e^A \int_{p_{(1)}} (p_{23}^*h - p_{12}^*h - p_{13}^*h)^2 \\
 = & \int_{p_{(1)}} (p_{23}^*h - p_{12}^*h - p_{13}^*h)^3 - 6g \, e^A \\
 = &  \int_{p_{(1)}} (p_{23}^*h)^3 - 3 \int_{p_{(1)}} (p_{23}^*h)^2 (p_{12}^*h + p_{13}^*h) +
    3 \int_{p_{(1)}} p_{23}^*h (p_{12}^*h+p_{13}^*h)^2 \\
    & - \int_{p_{(1)}} (p_{12}^*h + p_{13}^*h)^3 -6g \, e^A \, . \\
\end{split}
\]
Note that $\int_{p_{(1)}} (p_{23}^*h )^3 = \int_F h^3 $. We are done once we prove that 
\[ - 3 \int_{p_{(1)}} (p_{23}^*h)^2 (p_{12}^*h + p_{13}^*h) +
    3 \int_{p_{(1)}} p_{23}^*h (p_{12}^*h+p_{13}^*h)^2  
      - \int_{p_{(1)}} (p_{12}^*h + p_{13}^*h)^3 =0 \, . \]
Let $p_1 \colon \cc_g^2 \to \cc_g$ be the projection on the first coordinate, so that $p_1 \,p_{12}=p_1\,p_{13}=p_{(1)}$. As by Lemma \ref{selfinters} the restrictions of $p_{23}^*h\,p_{12}^*h$, $p_{23}^*h\,p_{13}^*h$ and $p_{12}^*h\,p_{13}^*h$ to a fiber of $p_{(1)}$ are equal, we have
\[
- 3 \int_{p_{(1)}} (p_{23}^*h)^2 (p_{12}^*h + p_{13}^*h) + 6 \int_{p_{(1)}} p_{23}^*hp_{12}^*h p_{13}^*h = 0 \, . \]
Next, by the projection formula we have
\[ \int_{p_{(1)}} p_{23}^*h (p_{12}^*h)^2 = 
\int_{p_{(1)}} p_{23}^*h (p_{13}^*h)^2 = 
\int_{p_{(1)}} (p_{12}^*h)^2 p_{13}^*h =
\int_{p_{(1)}} (p_{13}^*h)^2 p_{12}^*h =
\int_{p_1} h^2  \, . \]
Finally we have
\[ \int_{p_{(1)}} (p_{12}^*h)^3 = \int_{p_{(1)}} (p_{13}^*h)^3 = 0 \, ,  \]
again by the projection formula.
\end{proof}
We will now prove Theorem \ref{secondidentity}. Recall from equation (\ref{e_1^J}) that we put
\[ e_1^J = \frac{1}{2g+1} (-6\,\kappa^*\omega_0 + 12\, (2g-2)I^*\omega_1 ) \, .\]
The following identity follows immediately from Theorem \ref{main}.
\begin{prop} \label{AandJ} The equality
\[
e_1^J - e_1^A = \frac{-2(2g-2)\sqrt{-1}}{2g+1} \deldelbar \, a_g  \]
holds on $\cc_g$.
\end{prop}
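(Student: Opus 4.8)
The plan is to obtain the identity by straightforward substitution of the equalities (K1) and (K2) of Theorem~\ref{main} into the definition (\ref{e_1^J}) of $e_1^J$, and then to verify by bookkeeping of coefficients that the contributions of $e^A$ cancel while those of $e_1^A$ combine with exactly the coefficient $2g+1$.

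Concretely, I would first rewrite (K1) in the form $-6\,\kappa^*\omega_0 = 6g(2g-2)\,e^A + 3\,e_1^A$, and (K2) in the form $12(2g-2)\,I^*\omega_1 = (2g-2)\bigl(-6g\,e^A + e_1^A - 2\sqrt{-1}\,\deldelbar\,a_g\bigr)$. Adding these two expressions, the terms $6g(2g-2)\,e^A$ and $-6g(2g-2)\,e^A$ cancel, the terms $3\,e_1^A$ and $(2g-2)\,e_1^A$ sum to $(2g+1)\,e_1^A$, and the only surviving contribution besides this is $-2(2g-2)\sqrt{-1}\,\deldelbar\,a_g$. Dividing through by $2g+1$ gives
\[ e_1^J = e_1^A - \frac{2(2g-2)\sqrt{-1}}{2g+1}\,\deldelbar\,a_g \, , \]
which is precisely the assertion.

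Since every step is a direct substitution from Theorem~\ref{main}, there is no genuine obstacle here; the only point demanding a moment's care is the numerical bookkeeping, in particular the exact cancellation of the $e^A$-terms. One may view this cancellation as a consistency check rather than a surprise, since both $e_1^J$ and $e_1^A$ represent the tautological class $e_1$ in $H^2(\cc_g,\rr)$, so their difference is a priori an exact form and in particular its ``$e^A$-component'' must vanish.
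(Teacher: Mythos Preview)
Your proof is correct and is precisely the argument the paper has in mind: the paper simply states that the identity ``follows immediately from Theorem~\ref{main}'', and your substitution of (K1) and (K2) into the definition of $e_1^J$ is exactly that immediate computation.
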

\begin{proof}[Proof of Theorem \ref{secondidentity}] By the projection formula, as $e^A-e^J$ is pulled back from $\mm_g$, we have
\[ \begin{split} e_1^F - e_1^A & = -\int_p (e^A - e^J)(e^A + e^J) = 2(2g-2) (e^A-e^J) \\ & = \frac{-2(2g-2)\sqrt{-1}}{g(2g+1)} \, \deldelbar \, a_g  \end{split} \]
by Theorem \ref{kaw}. Using Proposition \ref{AandJ} we find
\[ e_1^F-e_1^J = (e_1^F-e_1^A)-(e_1^J-e_1^A) = \frac{-2(2g-2)^2\sqrt{-1}}{2g(2g+1)} \, \deldelbar \, a_g \]
and we are done.
\end{proof}
\begin{remark} Let $p_1,p_2 \colon \cc_g^2 \to \cc_g$ denote the projections on the first and second coordinate, respectively. Write $k_0$ for the class of $d\delta$ in $H^1(\cc_g^2,\hh_\zz)$. We mention that this class has been studied extensively from the point of view of cohomology of the mapping class groups by Morita \cite{familiesII}. Following \cite{km} we define for each $i,j \in \zz$ the cohomology class
\[ m_{i,j} = \int_{p_1} (p_2^*e)^i k_0^j \in H^{2i+j-2}(\cc_g,\largewedge^j \hh_\zz) \, , \]
called the \emph{twisted} Miller-Morita-Mumford class of type $(i,j)$. Note that the usual Miller-Morita-Mumford class $e_i \in H^{2i}(\cc_g,\zz)$ for $i \in \zz$ is given as the special case $e_i = m_{i+1,0}$. Another interesting special case is $m_{0,3}=\int_{p_1} k_0^3$, which according to \cite[Proposition 7.1]{km} is equal to $-6\,\tilde{k}$, where $\tilde{k}$ is the class in $H^1(\cc_g,\largewedge^3 \hh_\zz)$ determined by the first variation $dI$ of the pointed harmonic volume.

As is proved in \cite{km}, by contracting the coefficients of powers of the $m_{i,j}$ using the intersection pairing $M$ one always obtains a class in the tautological algebra $\qq[e,e_i]$ of $H^*(\cc_g,\zz)$. 
Given that $I^*\phi_1=M(\tilde{k}^2)$ and that $\kappa^*\phi_0$ can be obtained by further contracting $I^*\phi_1$ using $M$ by Proposition \ref{cgamma}, equations (M1) and (M2) yield examples of this general rule. 

Taking $e^A$ to represent $e$, and $d\delta$ to represent $k_0$, we obtain canonical twisted $j$-forms representing the classes $m_{i,j}$, and hence any class with trivial coefficients obtained by contracting the coefficients of powers of the $m_{i,j}$ is represented by a canonical form. As a result, some elements of $\qq[e,e_i]$ may be represented by various ``canonical'' forms in this way, and it seems of interest to study the resulting exact forms on $\cc_g$ obtained by taking the differences of such ``canonical forms'' realizing a given cohomology class. 

Using a variant of Proposition \ref{fiberint} one can show that the canonical twisted $1$-form $\int_{p_1} (d\delta)^{\otimes 3}$ representing $m_{0,3}$ is precisely $-6 \, dI$. Combined with Remark \ref{eAwithout} we find that in cohomological degree two (i.e. for $e,e_1$), by applying Kawazumi-Morita's recipe one obtains at least the canonical forms $e^A, e^J$ resp. $e_1^A, e_1^J$ representing $e$ resp. $e_1$, and Kawazumi's results \cite[Theorems~0.1, 0.2]{kawpr} show that their differences are rational multiples of the second variation $\deldelbar \, a_g$ of his invariant. 
\end{remark}

\section{Faltings delta- and Hain-Reed beta-invariant} \label{hainreedfaltings}

In the introduction to his paper \cite{kawpr} Kawazumi asked how the invariant $a_g$ is related with two other well known conformal invariants, namely the Faltings delta- and the Hain-Reed beta-invariant. In this section we will give a precise answer to this question. The result has appeared earlier as \cite[Theorem~1.4]{djsecond}, but the method of proof below will be more direct.

We start by introducing the Faltings delta-invariant. Assume that $g \geq 2$. Let $\ll$ be the determinant of the Hodge bundle of $p \colon \cc_g \to \mm_g$, and let $\omega_{\mathrm{Hdg}}$ denote the first Chern form of the $L^2$-metric (Petersson norm) on $\ll$. As is well known, by the Atiyah-Singer index theorem we have the equality $e_1=12\,\ll$ in $\H^2(\mm_g,\zz)$. 

Let $C$ be a compact Riemann surface of genus $g \geq 2$. The Faltings delta-invariant \cite{fa} \cite{we} $\delta_g$ of $C$ can be succinctly described as the analytic torsion \cite{rs} for the Arakelov metric on the holomorphic cotangent bundle $T^*C$, that is we have
\[ \delta_g = -6 \log \frac{ \det' \Delta }{\mathrm{vol} \, C} \, , \]
where $\det' \Delta$ is the zeta regularized determinant of the Laplace operator
with respect to the Arakelov metric, and $\mathrm{vol}\, C$ the associated volume of $C$. By the local Grothendieck-Riemann-Roch theorem \cite[Th\'eor\`eme~2.2]{bost} \cite[Th\'eor\`eme~11.4]{de}, which is a refined version of the Atiyah-Singer index theorem and Quillen's formula \cite[Theorem~1]{qu}, the delta-invariant solves the differential equation
\begin{equation} \label{noether} e_1^A-12 \, \omega_{\mathrm{Hdg}} = \frac{\deldelbar}{\pi\sqrt{-1}} \,\delta_g 
\end{equation}
on $\mm_g$. Recall from equation (\ref{e_1^J}) that we put
\[ e_1^J = \frac{1}{2g+1} (-6\,\kappa^*\omega_0 + 12\, (2g-2)I^*\omega_1 ) \, .\]
As was remarked before, by (M1)--(M2) the form $e_1^J$ represents the class $e_1$. Its significance can be described as follows. Wedging with the dual $\omega_0^*$ of the symplectic form yields a natural injective map $i \colon \jj_0 \to \jj_1$, and the discussion in Section \ref{pointed} shows that composing the Ceresa cycle section $\gamma \colon \cc_g \to \jj_1$ with the projection $q \colon \jj_1 \to \jj_1/\jj_0$ gives rise to a natural section $\nu \colon \mm_g \to \jj_1/\jj_0$. 

Following \cite[Section~5]{hrgeom}, the quotient $\jj_1/\jj_0$ carries a natural parallel closed $2$-form $\omega_{1,0}$, and hence a natural corresponding degree two cohomology class $\phi_{1,0}$. They are given as follows.  Consider again the contraction $c \colon \jj_1 \to \jj_0$ sending $a \wedge b\wedge c$ to $M(a,b)c + M(b,c)a + M(c,a)b$. Then as $ci=(g-1)\mathrm{id}_\jj$ we have a natural well-defined map of torus bundles $j \colon \jj_1/\jj_0 \to \jj_1$ given by sending $q(a \wedge b \wedge c)$ to $a \wedge b \wedge c - \omega^*_0 \wedge c(a \wedge b \wedge c)/(g-1)$. The closed $2$-form $\omega_{1,0}$ is defined to be the pullback $j^* \omega_1$ of $\omega_1$, and $\phi_{1,0} \in H^2(\jj_1/\jj_0)$ is to be its class in cohomology. Note that by Corollary \ref{integral} the class $2 \, \omega_{1,0}$ is integral. 

Morita in \cite{molinear} has computed the cohomology class $2 \,\nu^* \phi_{1,0}$ in $H^2(\mm_g,\zz)$. 
\begin{thm} \label{moritaMg} (Morita, \cite[Theorem 7]{hrgeom} \cite[Theorem 5.8]{molinear}) Assume that $g \geq 2$. The equality of cohomology classes
\[ 2 \, \nu^* \phi_{1,0} = (8g+4) \, \ll \]
holds in $\H^2(\mm_g,\zz)$.
\end{thm}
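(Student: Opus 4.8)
The plan is to reduce the asserted identity to a computation on $\cc_g$ and then feed in Morita's relations (M1) and (M2). First, the pullback $p^*\colon H^2(\mm_g,\zz)\to H^2(\cc_g,\zz)$ is injective, because in the Leray spectral sequence for $p$ the term $E_2^{0,1}=H^0(\mm_g,R^1p_*\zz)$ vanishes: the monodromy representation of $R^1p_*\zz$ has no nonzero invariants. It therefore suffices to prove the identity after applying $p^*$. By the very definition of $\omega_{1,0}$ we have $\phi_{1,0}=j^*\phi_1$, and pulling the section $\nu$ back along $p$ returns the section $q\circ\gamma$ of $\jj_1/\jj_0$ over $\cc_g$, where $\gamma\colon\cc_g\to\jj_1$ is the universal Ceresa section. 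Hence $p^*\nu^*\phi_{1,0}=(j\circ q\circ\gamma)^*\phi_1$ in $H^2(\cc_g,\zz)$.

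The next step is to make $j\circ q\circ\gamma$ explicit as a section of $\jj_1$ over $\cc_g$. Since $i\colon\jj_0\to\jj_1$ is wedging with the dual symplectic form $\omega_0^*$ and $c\circ i=(g-1)\,\mathrm{id}$, the section $j$ of $q$ satisfies $j\circ q=\mathrm{id}_{\jj_1}-\tfrac{1}{g-1}\,i\circ c$. Applying this to $\gamma$ and inserting $c\gamma=\kappa$ from Proposition \ref{cgamma} together with $\gamma=2I$ from Proposition \ref{pulte} yields $j\circ q\circ\gamma=2I-\tfrac{1}{g-1}\,\omega_0^*\wedge\kappa$.

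Now I would compute the pullback. By (\ref{section*}) the assignment $\sigma\mapsto\sigma^*\omega_1=M_1(\d\sigma)^{\otimes2}$ is quadratic in $\sigma$, so $(j\circ q\circ\gamma)^*\omega_1$ splits into a multiple of $I^*\omega_1$, a cross term $M_1\bigl(\d I\otimes(\omega_0^*\wedge\d\kappa)\bigr)$, and a multiple of $M_1\bigl((\omega_0^*\wedge\d\kappa)^{\otimes2}\bigr)$. The latter two are evaluated by a representation-theoretic adjunction: since $\Hom_{\mathrm{Sp}}(\largewedge^3\hh_\rr,\hh_\rr)$ is one-dimensional, spanned by $c$, any two nonzero $\mathrm{Sp}$-invariant pairings $\largewedge^3\hh_\rr\otimes\hh_\rr\to\rr$ are proportional; in particular $M_1\bigl(\alpha\otimes(\omega_0^*\wedge\beta)\bigr)$ is a fixed scalar multiple of $M_0(c\alpha\otimes\beta)$, the scalar being pinned down on one decomposable vector. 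Combining this with $c\,\d I=\tfrac12\,\d\kappa$ (again from $c\gamma=\kappa$, $\gamma=2I$) and $c(\omega_0^*\wedge v)=(g-1)v$ rewrites $p^*\nu^*\phi_{1,0}$ as an explicit $\qq$-linear combination of $I^*\phi_1$ and $\kappa^*\phi_0$.

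Finally, I would substitute Morita's identities $2\,\kappa^*\phi_0=-2g(2g-2)\,e-e_1$ and $12\,I^*\phi_1=-6g\,e+e_1$. The Euler-class contributions must cancel, since the left side is pulled back from $\mm_g$, leaving a rational multiple of $e_1$; together with $e_1=12\,\ll$ one reads off $2\,\nu^*\phi_{1,0}=(8g+4)\,\ll$ in $H^2(\mm_g,\qq)$. To promote this to an integral identity one invokes that $2\,\phi_{1,0}$ is integral (Corollary \ref{integral}) and that $H^2(\mm_g,\zz)$ is torsion-free for $g\geq3$, treating $g=2$ directly. Alternatively one may run the whole argument at the level of $2$-forms, using (K1) and (K2) in place of (M1) and (M2) together with Proposition \ref{a_gfiberint}: this exhibits $\nu^*\omega_{1,0}$ as a rational multiple of $e_1^A$ plus a multiple of $\deldelbar\,a_g$, and passing to cohomology gives the same conclusion, since $e_1^A$ represents $12\,\ll$ and $\deldelbar\,a_g$ is exact. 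I expect the only real obstacle to be the bookkeeping in the third step — tracking signs and the normalization constant in the interaction between $\omega_0^*\wedge(\cdot)$ and $M_1$ — and, as a minor point, the rational-to-integral refinement in small genus.
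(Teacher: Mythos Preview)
Your approach is correct and reaches the same conclusion, but it is more laborious than what the paper does. The paper does not expand $(jq\gamma)^*\omega_1$ quadratically and then evaluate the cross terms by an $\mathrm{Sp}$-adjunction argument; instead it quotes the ready-made identity $q^*\omega_{1,0}=(g-1)\,\omega_1-c^*\omega_0$ on $\jj_1$ (citing \cite[Proposition~18]{hrgeom} and \cite[Lemma~5.1]{kawpr}) and simply pulls it back along $\gamma$. With $c\gamma=\kappa$ and $\gamma=2I$ this gives $p^*\nu^*\omega_{1,0}=4(g-1)\,I^*\omega_1-\kappa^*\omega_0$ in one line, i.e.\ $6\,\nu^*\omega_{1,0}=(2g+1)\,e_1^J$ (Proposition~\ref{between}). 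Your quadratic expansion together with the representation-theoretic normalization of $M_1(\alpha\otimes(\omega_0^*\wedge\beta))$ is precisely a re-derivation of that cited lemma: the identity $(jq)^*\omega_1=(g-1)\,\omega_1-c^*\omega_0$ packages exactly the three terms you propose to compute separately. Both routes then pass to cohomology via $e_1=12\,\ll$ and divide the resulting equation $6\,\nu^*\phi_{1,0}=3(8g+4)\,\ll$ by $3$; the paper handles this uniformly by observing that $H^2(\mm_g,\zz)$ is infinite cyclic for $g\geq 3$ and cyclic of order $10$ for $g=2$, so multiplication by $3$ is injective in either case. The paper's route is shorter because it outsources the linear-algebra step; yours is more self-contained, at the cost of the sign-and-constant bookkeeping you yourself identify as the main hazard. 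Your preliminary remark on the injectivity of $p^*$ is fine but also not strictly needed once one has the equality of $2$-forms on $\cc_g$ with both sides pulled back from $\mm_g$.
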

It turns out that we can be a bit more precise: one can write the pullback $2$-form $\nu^* \omega_{1,0}$ as a rational multiple of the form $e_1^J$.  
\begin{prop} \label{between} We have an equality
\[ 6 \, \nu^* \omega_{1,0} = (2g+1) \, e_1^J \]
of $2$-forms on $\mm_g$. 
\end{prop}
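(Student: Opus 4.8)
The plan is to pull the claimed equality back along $p\colon\cc_g\to\mm_g$ and to verify it as an identity of $2$-forms on $\cc_g$; nothing is lost, since by Proposition \ref{AandJ} the form $e_1^J$ on $\cc_g$ equals $e_1^A-\tfrac{2(2g-2)\sqrt{-1}}{2g+1}\,\deldelbar\,a_g$, hence is pulled back from $\mm_g$, and $p^*$ is injective on $2$-forms on $\mm_g$. So the task reduces to computing $p^*\bigl(\nu^*\omega_{1,0}\bigr)\in A^2(\cc_g)$.

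First I would unwind the definitions. Since $\omega_{1,0}=j^*\omega_1$, we have $\nu^*\omega_{1,0}=(j\circ\nu)^*\omega_1$, and the section $j\circ\nu$ of $\jj_1$ over $\mm_g$ pulls back along $p$ to the section $j\circ q\circ\gamma$ of $\jj_1$ over $\cc_g$. From the formula defining $j$, together with $c\circ i=(g-1)\,\mathrm{id}$, one reads off that $j\circ q=\mathrm{id}_{\jj_1}-\tfrac{1}{g-1}\,i\circ c$; inserting $c\gamma=\kappa$ (Proposition \ref{cgamma}) this yields
\[ p^*(j\circ\nu)=\gamma-\tfrac{1}{g-1}\,i\kappa \]
as a section of $\jj_1$ over $\cc_g$. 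So everything comes down to proving
\[ 6\,\bigl(\gamma-\tfrac{1}{g-1}\,i\kappa\bigr)^*\omega_1=-6\,\kappa^*\omega_0+12(2g-2)\,I^*\omega_1 \]
in $A^2(\cc_g)$, the right-hand side being $(2g+1)\,e_1^J$ by the definition of $e_1^J$.

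Next I would expand the left-hand side with the section-pullback formula (\ref{section*}). As $i$ and $c$ are parallel one has $d\bigl(\gamma-\tfrac{1}{g-1}i\kappa\bigr)=d\gamma-\tfrac{1}{g-1}\,i\,d\kappa$ with $c\,d\gamma=d\kappa$, so that
\[ \bigl(\gamma-\tfrac{1}{g-1}i\kappa\bigr)^*\omega_1=M_1(d\gamma)^{\otimes2}-\tfrac{2}{g-1}\,M_1\bigl(d\gamma\otimes i\,d\kappa\bigr)+\tfrac{1}{(g-1)^{2}}\,M_1(i\,d\kappa)^{\otimes2}. \]
The first term is $4\,I^*\omega_1$, by $\gamma=2I$ (Proposition \ref{pulte}). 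For the other two the main algebraic input is the identity
\[ M_1\bigl(a,\ \omega_0^*\wedge w\bigr)=M\bigl(c(a),\,w\bigr)\qquad\bigl(a\in\largewedge^{3}\hh_\rr,\ w\in\hh_\rr\bigr), \]
a short verification in a symplectic frame (cf. Proposition \ref{explicit}); it gives in particular $M_1(i v\otimes i w)=(g-1)\,M(v\otimes w)$, and, using $c\,d\gamma=d\kappa$ once more, it rewrites the cross term and the last term as rational multiples of $\kappa^*\omega_0=M_0(d\kappa)^{\otimes2}$. Collecting the three contributions presents $6\,(\gamma-\tfrac1{g-1}i\kappa)^*\omega_1$ as an explicit rational combination of $I^*\omega_1$ and $\kappa^*\omega_0$; comparing with $e_1^J=\tfrac{1}{2g+1}\bigl(-6\,\kappa^*\omega_0+12(2g-2)\,I^*\omega_1\bigr)$ finishes the proof.

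The delicate point — the ``hard part'' — is entirely this bookkeeping of normalizations: the combined effect of $\gamma=2I$, $c\gamma=\kappa$, $c\circ i=(g-1)\,\mathrm{id}$ and the pairing $M_1$, and in particular nailing down the algebraic identity above with the correct sign and constant. As an independent check, passing to cohomology the computation must be compatible with Morita's Theorem \ref{moritaMg}: via $e_1=12\,\ll$ the latter forces $6\,\nu^*\phi_{1,0}=(2g+1)\,e_1=(2g+1)\,[e_1^J]$, so the two sides of Proposition \ref{between} are a priori cohomologous, and the content of the proposition is the upgrade of this to an equality of forms, which the above computation supplies directly.
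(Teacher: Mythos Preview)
Your approach is the paper's approach: both arguments pull back along $\gamma$ and use $\gamma=2I$ together with $c\gamma=\kappa$. The paper does it in two lines by quoting the ready-made identity
\[ q^*\omega_{1,0}=(g-1)\,\omega_1-c^*\omega_0 \]
on $\jj_1$ (from Hain--Reed), and then $\gamma^*$ of this is $4(g-1)\,I^*\omega_1-\kappa^*\omega_0$, which after multiplying by $6$ is exactly $(2g+1)\,e_1^J$.

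What you do is try to rederive that identity from $\omega_{1,0}=j^*\omega_1$ via the adjoint relation $M_1(a,\omega_0^*\wedge w)=M(c(a),w)$. That is fine in principle, but you stop at ``collecting the three contributions\dots\ finishes the proof,'' and this is precisely where a real discrepancy hides. Carrying your expansion through, with $M_1(d\gamma,id\kappa)=M_0(d\kappa,d\kappa)=\kappa^*\omega_0$ and $M_1(id\kappa,id\kappa)=(g-1)\,\kappa^*\omega_0$, one gets
\[ \bigl(\gamma-\tfrac{1}{g-1}\,i\kappa\bigr)^*\omega_1
   =4\,I^*\omega_1-\tfrac{2}{g-1}\,\kappa^*\omega_0+\tfrac{1}{g-1}\,\kappa^*\omega_0
   =4\,I^*\omega_1-\tfrac{1}{g-1}\,\kappa^*\omega_0, \]
hence $6\,p^*\nu^*\omega_{1,0}=24\,I^*\omega_1-\tfrac{6}{g-1}\,\kappa^*\omega_0$, which is $(g-1)^{-1}$ times the desired right-hand side $(2g+1)\,e_1^J=24(g-1)\,I^*\omega_1-6\,\kappa^*\omega_0$. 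In other words, taking $\omega_{1,0}=j^*\omega_1$ literally yields $q^*\omega_{1,0}=\omega_1-\tfrac{1}{g-1}\,c^*\omega_0$, not the Hain--Reed formula $q^*\omega_{1,0}=(g-1)\,\omega_1-c^*\omega_0$ that the paper actually uses. This is a normalization mismatch (by the scalar $g-1$) between the paper's informal description of $\omega_{1,0}$ and the convention in the cited reference; the proposition holds with the latter. So either quote $q^*\omega_{1,0}=(g-1)\,\omega_1-c^*\omega_0$ directly, as the paper does, or note explicitly that the $\omega_{1,0}$ in the statement is $(g-1)\,j^*\omega_1$ rather than $j^*\omega_1$.
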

\begin{proof} By \cite[Proposition 18]{hrgeom} or \cite[Lemma 5.1]{kawpr} we have
the equality
\[ q^* \omega_{1,0} = (g-1)\,\omega_1 - c^* \omega_0   \]
of $2$-forms on $\jj_1$. By Proposition \ref{cgamma} we have $c\gamma=\kappa$. The proposition follows by pulling back along $\gamma$.
\end{proof}
We thus reobtain Morita's result Theorem \ref{moritaMg}: Proposition \ref{between} gives at least that $6\,\nu^*\phi_{1,0}=3(8g+4)\,\ll$ by taking cohomology classes. But $\H^2(\mm_g,\zz)$ is infinite cyclic if $g \geq 3$, and cyclic of order $10$ if $g=2$ \cite{ha1} \cite{ha2}.  

We recall that if $g \geq 3$, for the orbifold $\mm_g$ one has 
\[ H^0(\mm_g,\oo)=\CC \, , \quad H^1(\mm_g,\CC)=H^1(\mm_g,\oo)=0 \, . \] 
The first equality is shown by considering the closure of $\mm_g$ in the Satake compactification of the moduli space of principally polarized abelian varieties, the second set of equalities follows from Harer's celebrated results \cite{ha1} \cite{ha2} on the cohomology of the mapping class group. We infer from this \cite[Lemma 8.1]{kawhb} that if a real $(1,1)$-form $\eta$ on $\mm_g$ is $d$-exact, then there is a real valued function $f \in C^\infty(\mm_g,\rr)$ such that $\eta = \sqrt{-1} \deldelbar f$. Moreover, such a function $f$ is unique up to a constant.

Consider then the real $(1,1)$-form $\eta = 2\, \nu^* \omega_{1,0} - (8g+4) \, \omega_{\mathrm{Hdg}} $ on $\mm_g$. As was observed by Hain and Reeed in \cite{hrar},  Morita's result Theorem \ref{moritaMg} implies that the form $\eta$ is $d$-exact, and hence there exists a function $\beta_g \in C^\infty(\mm_g,\rr)$, unique up to a constant, solving the $\deldelbar$-equation
\begin{equation} \label{defbeta} \frac{\deldelbar}{\pi\sqrt{-1}} \, \beta_g = 2\, \nu^* \omega_{1,0} - (8g+4) \, \omega_{\mathrm{Hdg}}  \, .
\end{equation}
We call $\beta_g \in C^\infty(\mm_g,\rr)/\rr$ the Hain-Reed beta-invariant. Our results so far easily show that $\beta_g$ can be expressed as a rational linear combination of Faltings's delta-invariant and Kawazumi's $a_g$-invariant. 
\begin{thm} \label{beta} (cf. \cite[Theorem~1.4]{djsecond}) We have
\[ \beta_g = \frac{1}{3}\left( (2g-2) \, \varphi_g + (2g+1) \, \delta_g \right) \]
as a natural representative in $C^\infty(\mm_g)$ of Hain-Reed's beta-invariant.
\end{thm}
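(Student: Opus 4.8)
The plan is to reduce the statement to a short chain of substitutions among the $\deldelbar$-equations already at our disposal, and then to conclude by the uniqueness (up to an additive constant) of solutions of such equations on $\mm_g$.

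First I would rewrite the defining relation (\ref{defbeta}) of $\beta_g$ by inserting Proposition \ref{between}, which gives $2\,\nu^*\omega_{1,0} = \tfrac{2g+1}{3}\,e_1^J$, together with the numerical identity $8g+4 = 4(2g+1) = \tfrac{2g+1}{3}\cdot 12$. This turns (\ref{defbeta}) into
\[ \frac{\deldelbar}{\pi\sqrt{-1}}\,\beta_g \;=\; \frac{2g+1}{3}\bigl( e_1^J - 12\,\omega_{\mathrm{Hdg}} \bigr) \]
on $\mm_g$. Next I would split $e_1^J - 12\,\omega_{\mathrm{Hdg}} = (e_1^J - e_1^A) + (e_1^A - 12\,\omega_{\mathrm{Hdg}})$: the first summand equals $\tfrac{-2(2g-2)\sqrt{-1}}{2g+1}\,\deldelbar\,a_g$ by Proposition \ref{AandJ}, and the second equals $\tfrac{\deldelbar}{\pi\sqrt{-1}}\,\delta_g$ by the local Grothendieck-Riemann-Roch relation (\ref{noether}). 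Using the elementary identity $-2\sqrt{-1}\,\deldelbar f = \tfrac{\deldelbar}{\pi\sqrt{-1}}(2\pi f)$ and the fact that $\varphi_g = 2\pi\,a_g$, the first summand becomes $\tfrac{2g-2}{3}\cdot\tfrac{\deldelbar}{\pi\sqrt{-1}}\,\varphi_g$, and after collecting terms one arrives at
\[ \frac{\deldelbar}{\pi\sqrt{-1}}\,\beta_g \;=\; \frac{\deldelbar}{\pi\sqrt{-1}}\!\left( \frac{(2g-2)\,\varphi_g + (2g+1)\,\delta_g}{3} \right). \]

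Finally I would invoke the $\deldelbar$-Lemma on $\mm_g$ recalled above: a $d$-exact real $(1,1)$-form on $\mm_g$ is $\sqrt{-1}\,\deldelbar$ of a $C^\infty$ function, unique up to an additive constant. The last display then forces $\beta_g = \tfrac{1}{3}\bigl((2g-2)\varphi_g + (2g+1)\delta_g\bigr)$ as elements of $C^\infty(\mm_g)/\rr$. I do not anticipate a genuine obstacle: this argument is purely a reassembly of known relations, and the only thing requiring real care is the consistent bookkeeping of the normalizing constants and of the factors of $\sqrt{-1}$ when passing between the conventions $\deldelbar$ and $\tfrac{\deldelbar}{\pi\sqrt{-1}}$, together with checking that Propositions \ref{between} and \ref{AandJ}, the relation (\ref{noether}), and the definition (\ref{defbeta}) are all written with matching sign normalizations. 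All the cohomological content is already packaged into Morita's Theorem \ref{moritaMg} (which is what makes (\ref{defbeta}) meaningful) and into the vanishing of $H^1(\mm_g,\oo)$.
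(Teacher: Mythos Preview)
Your argument is correct and is essentially the same as the paper's: both proofs combine Proposition~\ref{between}, Proposition~\ref{AandJ}, equation~(\ref{noether}) and the defining equation~(\ref{defbeta}), differing only in the order in which these are substituted. One small expository slip: when you write that ``the first summand becomes $\tfrac{2g-2}{3}\cdot\tfrac{\deldelbar}{\pi\sqrt{-1}}\,\varphi_g$'' you have tacitly already multiplied by the outer factor $\tfrac{2g+1}{3}$ (the rewriting of $e_1^J-e_1^A$ alone gives $\tfrac{2g-2}{2g+1}\cdot\tfrac{\deldelbar}{\pi\sqrt{-1}}\,\varphi_g$), but your final displayed identity is correct.
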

\begin{proof} Combining equation (\ref{noether}) and Proposition \ref{AandJ} we find, with $\varphi_g = 2\pi \, a_g$, the relation
\[ e_1^J - 12 \, \omega_{\mathrm{Hdg}} = \frac{\deldelbar}{\pi\sqrt{-1}} \left( \frac{2g-2}{2g+1} \varphi_g + \delta_g \right) \, . \]
Then with Proposition \ref{between} and equation (\ref{defbeta}) we obtain the result.
\end{proof}
In their paper \cite{hrar} Hain and Reed determined the asymptotic behavior of their invariant $\beta_g$  towards the boundary of $\mm_g$ in the Deligne-Mumford compactification \cite[Theorem~1]{hrar}. The proof in \cite{hrar} uses topological arguments, in particular the Johnson homomorphisms. 

Using Theorem \ref{beta}, an alternative approach can be given. First of all, J. Jorgenson \cite{jo} and R. Wentworth \cite{we} determined the asymptotics of $\delta_g$.
\begin{thm} \label{asymptdelta} Let $D$ denote the unit disk in $\CC$ and let $X \to D$ be a proper holomorphic map which restricts to a family of compact Riemann surfaces of genus $g \geq 2$ over $D \setminus \{0\}$, and where the fiber $X_0$ over $0$ is a stable complex curve.
\begin{itemize}
\item[(i)] If $X_0$ is irreducible with only one node, then
\[ \delta_g(X_t) \sim -\frac{4g-1}{3g} \log |t| - 6 \log \log (1/|t|)   \]
as $t \to 0$.
\item[(ii)] If $X_0$ is reducible with one node and its components have genera $i$ and $g-i$ then
\[  \delta_g(X_t) \sim -\frac{4i(g-i)}{g} \log |t| \]
as $t \to 0$.
\end{itemize}
Here, if $f,g$ are two functions on the punctured unit disk, the notation $f \sim g$ denotes that $f-g$ is bounded as $t\to 0$. 
\end{thm}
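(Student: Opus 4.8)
The plan is to reduce the asymptotics of $\delta_g$ to those of two more transparent metrised line bundles, and then to insert the classical degeneration estimates for Hodge structures and for the Arakelov Green's function. Let $\pi\colon X\to D$ be as in the statement. Over the punctured disc $D^\ast=D\setminus\{0\}$ one has the Hodge bundle $\lambda=\det\R\pi_\ast\omega_{X/D}$ with its $L^2$-metric, and the Deligne pairing $\langle\omega_{X/D},\omega_{X/D}\rangle$ with the Arakelov metric (\ref{metric}); both line bundles extend canonically over all of $D$, the Deligne pairing being formed from the relative dualising sheaf of the semistable family $X\to D$. Fix holomorphic generators $s_\lambda$, $s_\omega$ of these extensions near $t=0$, matched under Mumford's isomorphism $\lambda^{\otimes 12}\isom\langle\omega_{X/D},\omega_{X/D}\rangle$. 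Integrating the differential equation (\ref{noether}) --- equivalently, using Faltings's comparison of the $L^2$- and Arakelov metrics under Mumford's isomorphism --- one obtains
\[ \delta_g(X_t)=12\log\|s_\lambda(t)\|_{L^2}^2-\log\|s_\omega(t)\|_{\mathrm{Ar}}^2+\mathrm{O}(1)\qquad(t\to 0),\]
so it suffices to analyse the two terms on the right separately.

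For the first term: up to a bounded positive factor, $\|s_\lambda(t)\|_{L^2}^2$ equals $\det(\operatorname{Im}\tau(t))$ in a symplectic frame adapted to the vanishing cycle $\gamma$, with $\tau(t)$ the period matrix. By Picard--Lefschetz and Schmid's nilpotent orbit theorem the monodromy of a one-nodal degeneration is a single transvection along $\gamma$. In case (i), $\gamma$ is non-separating, hence primitive in $H_1$, so exactly one entry of $\tau(t)$ grows logarithmically while all others stay bounded; then $\det(\operatorname{Im}\tau(t))$ grows like $\log(1/|t|)$ and the first term equals $12\log\log(1/|t|)+\mathrm{O}(1)$. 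In case (ii), $\gamma$ is separating, hence null-homologous, the monodromy is trivial, the family of Jacobians extends across $t=0$ with fibre $\operatorname{Jac}X_0^{(1)}\times\operatorname{Jac}X_0^{(2)}$, and $\det(\operatorname{Im}\tau(t))$ tends to a finite nonzero limit, so the first term is $\mathrm{O}(1)$. This already confines the $\log\log$ phenomenon to case (i).

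For the second term: by (\ref{metric}) the Arakelov metric on $\langle\omega_{X/D},\omega_{X/D}\rangle$ is given, up to a divisorial term, by fibre integrals of $\log G_t$ against the Arakelov volume form $\mu_t=\frac{\sqrt{-1}}{2g}\sum_i\psi_i\wedge\bar\psi_i$, so its asymptotics are controlled by the behaviour near the node of $\mu_t$ --- whose $L^2$-orthonormalisation of the $\psi_i$ once again drags in $\det(\operatorname{Im}\tau(t))$ --- and of $\log G_t(P,Q)$ as $P,Q$ approach the pinching locus. These are exactly the estimates obtained by Wentworth, by comparing $G_t$ with the admissible Green's function on the dual graph of $X_0$ (one loop in case (i), one segment in case (ii)) and with the hyperbolic metric; an alternative is to work directly from the representation $\delta_g=-6\log(\det'\Delta/\mathrm{vol})$ and Wolpert-type surgery asymptotics for the Laplacian, which is Jorgenson's route. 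Substituting these estimates, and separating the $\mathrm{O}(1)$ contribution of the ``thick part'' of $X_t$ from the part localised in the pinching collar, yields $-\log\|s_\omega(t)\|_{\mathrm{Ar}}^2=-\frac{4g-1}{3g}\log|t|+(\text{a known multiple of }\log\log(1/|t|))+\mathrm{O}(1)$ in case (i), and $-\log\|s_\omega(t)\|_{\mathrm{Ar}}^2=-\frac{4i(g-i)}{g}\log|t|+\mathrm{O}(1)$ in case (ii). Adding the first term gives the theorem; the leading coefficient is, up to normalisation, the self-intersection of the relative dualising sheaf localised at the node, the geometric incarnation of the local contribution to the arithmetic Noether formula at a place of semistable reduction.

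The main obstacle is this last localisation. One must use the precise concentration rate of $\mu_t$ and the precise blow-up rate of $\log G_t$ along the pinching collar; in case (i) the $\log\log$ corrections cascade through the $L^2$-orthonormalisation of the $\psi_i$ and must combine with the $+12\log\log(1/|t|)$ from the Hodge side so that exactly $-6\log\log(1/|t|)$ survives. The leading $\log|t|$-coefficient is by contrast robust and can be read off from the intersection theory of $X_0$, so the real work lies in pinning down the fine constants: the exact $\frac{4g-1}{3g}$ in case (i) and the exact coefficient $-6$ of $\log\log(1/|t|)$, which is where the full strength of the Jorgenson--Wentworth analysis is required.
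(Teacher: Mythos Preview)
The paper does not prove this theorem; it is stated as a known result and attributed to Jorgenson \cite{jo} and Wentworth \cite{we}, with no argument given beyond the citations. So there is no ``paper's own proof'' to compare against.

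Your sketch is a reasonable outline of the strategy actually carried out in those references: decompose $\delta_g$ via the metrised Mumford isomorphism into a Hodge-bundle contribution and a Deligne-pairing (Arakelov self-intersection) contribution, extract the $\log\log$ from the degeneration of $\det\operatorname{Im}\tau(t)$ via the nilpotent orbit theorem, and read off the $\log|t|$-coefficient from the local intersection theory on the stable model. The distinction you draw between the non-separating and separating cases (non-trivial versus trivial homological monodromy) is correct and is exactly what isolates the $\log\log$ term in case (i).

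That said, as a self-contained proof your proposal has a genuine gap: at the crucial point --- the asymptotics of $\log\|s_\omega(t)\|_{\mathrm{Ar}}^2$ --- you do not actually perform the analysis, but instead invoke ``exactly the estimates obtained by Wentworth'' and ``the full strength of the Jorgenson--Wentworth analysis''. Since those estimates \emph{are} the content of the theorem, this is circular. In particular, your claim that the collar contribution in case (i) produces precisely $-\frac{4g-1}{3g}\log|t|$ plus a $\log\log$ correction that combines with the $+12\log\log(1/|t|)$ from the Hodge side to leave exactly $-6\log\log(1/|t|)$ is asserted, not derived; getting those constants is the substance of \cite{jo} and \cite{we}. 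If you intend this as a proof sketch with pointers to the literature, it is fine; if you intend it as an independent proof, the collar analysis needs to be carried out rather than cited.
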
 
Next, in \cite{djas} we proved the following theorem. 
\begin{thm} \label{asymptphi} Take the assumptions of the previous theorem.
\begin{itemize}
\item[(i)] If $X_0$ is irreducible with only one node, then
\[ \varphi_g(X_t) \sim -\frac{g-1}{6g} \log |t|  \]
as $t \to 0$.
\item[(ii)] If $X_0$ is reducible with one node and its components have genera $i$ and $g-i$ then
\[ \varphi_g(X_t) \sim -\frac{2i(g-i)}{g} \log |t| \]
as $t \to 0$.
\end{itemize}
\end{thm}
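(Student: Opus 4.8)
The plan is to compute directly with the analytic expression for $\varphi_g=2\pi\,a_g$ supplied by Proposition~\ref{formula_ag}: one has $\varphi_g(C)=\int_{C\times C}\log G\,h^2$, where $G$ is the Arakelov--Green function of $C$ and $h$ is the $(1,1)$-form~(\ref{explicith}) built from the canonical volume form $\mu$ and an orthonormal basis $(\psi_1,\dots,\psi_g)$ of $H^{1,0}$. This route is independent of the results (K1)--(K3) of the present paper. Replacing $D$ by a finite cover if necessary, I would assume the total space of $X\to D$ is smooth, so that near the node the family reads $uv=t$ in a bidisk; write $N_t=\{\,|t|\le|u|\le 1\,\}\subset X_t$ for the ``neck'' and $X^\circ_t=X_t\setminus N_t$ for the ``thick part'', so that $X^\circ\to D$ extends smoothly across $t=0$ with central fibre the normalization $\widetilde X_0$ with two small disks removed.

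The next step is to assemble the boundary asymptotics of the three ingredients of the integrand. First, by the nilpotent orbit theorem one can choose the frame $(\psi_i)$ so that in case~(i) the forms $\psi_1,\dots,\psi_{g-1}$ converge on $X^\circ$ to an orthonormal basis of $H^0(\widetilde X_0,\Omega^1)$ while $\psi_g\sim(-2\pi\log|t|)^{-1/2}\eta_0$, with $\eta_0$ a third-kind differential with simple poles of residue $\pm1$ at the two branches of the node, and in case~(ii) the frame splits into orthonormal frames of the two components, of total masses $i/g$ and $(g-i)/g$. Consequently $\mu_t=\frac{\sqrt{-1}}{2g}\sum_i\psi_i\wedge\bar\psi_i$ converges on $X^\circ$ to the corresponding convex combination of the canonical forms of the pieces, the escaping mass $1/g$ in case~(i) redistributing over $N_t$ as the uniform form $\frac{1}{2\pi g(-\log|t|)}\,d\log|u|\wedge d\arg u$. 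Finally, for the Green function I would invoke its known degeneration (Jorgenson~\cite{jo}, Wentworth~\cite{we}, and the author's~\cite{djas}): $\log G_t(x_t,y_t)=(-\log|t|)\,\mathbf g_\Gamma(\mathrm{red}\,x,\mathrm{red}\,y)+O(1)$ uniformly for $x_t\to x$, $y_t\to y$ on compacta of $X^\circ$, where $\mathbf g_\Gamma$ is the suitably normalized Green function of the reduction graph $\Gamma$ with Zhang's admissible metric~\cite{zh} --- a one-vertex loop of ``genus'' $g-1$ in case~(i), a single edge joining vertices of genera $i$ and $g-i$ in case~(ii) --- together with the explicit plumbing model of $G_t$ on $N_t$.

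With these inputs I would split $\int_{X_t\times X_t}\log G_t\,h_t^2$ over the four regions $X^\circ_t\times X^\circ_t$, $X^\circ_t\times N_t$, $N_t\times X^\circ_t$, $N_t\times N_t$. Every $(1,1)$-form occurring in $h_t^2$ is bounded except for those built from $\psi_g$ in case~(i), which carry a factor $(-\log|t|)^{-1/2}$; since $\mathrm{area}(N_t)=O((-\log|t|)^{-1})$ while $\log G_t=O(\log|t|)$, a term-by-term estimate shows that, up to a bounded error, the whole integral equals $(-\log|t|)$ times the integral of $\mathbf g_\Gamma(\mathrm{red}\,x,\mathrm{red}\,y)$ against the limiting form $h_0^2$ over $\Gamma\times\Gamma$, the neck contributing through its uniform limiting forms. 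This last quantity is a finite computation on the one-edge metrized graph $\Gamma$: its effective resistance is the edge length, its canonical measure concentrates at the vertices with weights proportional to the genera (plus a uniform part along the loop in case~(i)), and the total masses of $h_0^2$ on each product of pieces are pinned down by Lemma~\ref{selfinters} applied to the components. Carrying this out reproduces the coefficients recorded in the statement, the numerator $g-1$ in case~(i) being the residual (non-escaping) mass $(g-1)/g$ and the $\frac16$ the standard constant attached to the circle. One also checks that no $\log\log(1/|t|)$ correction appears --- in contrast with $\delta_g$ in Theorem~\ref{asymptdelta}(i) --- which is structural: $a_g$ depends only on the potential-theoretic pair $(\mu,G)$ and is insensitive to the zeta-regularized Laplace determinant, hence to the pinching small eigenvalue.

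The hard part is the neck: one needs genuinely uniform, not merely compact-uniform, control of $G_t$, $\mu_t$ and the $\psi_{i,t}$ on a shrinking neighbourhood of the node, and one must verify that the pieces involving $N_t$ contribute $O(1)$ rather than only $o(\log|t|)$. This is where the classical plumbing-coordinate estimates and the precise compatibility between the Arakelov metric and Zhang's admissible metric on $\Gamma$ do the real work; once these are in place, identifying the combinatorial coefficient is elementary. An alternative would be to extract the leading term from the $\deldelbar$-equation $e_1^J-e_1^A=\frac{-2(2g-2)\sqrt{-1}}{2g+1}\,\deldelbar\,a_g$ of Proposition~\ref{AandJ}, by restricting to a transverse holomorphic disk and analysing the boundary behaviour of $e_1^A$ and $e_1^J$ as currents; but that requires comparable degeneration estimates together with a further argument to fix the constant of integration.
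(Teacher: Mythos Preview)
The paper does not prove this theorem at all: it merely records the statement and attributes the proof to \cite{djas}. There is therefore no ``paper's own proof'' to compare against; the result is imported wholesale from an external reference.

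That said, your proposed route---expressing $\varphi_g$ via Proposition~\ref{formula_ag} as $\int_{C\times C}\log G\,h^2$ and then feeding in the known degeneration of $G$, $\mu$, and an adapted orthonormal frame of $H^{1,0}$---is the natural one, and is in fact the strategy carried out in \cite{djas}. A few remarks on your sketch. First, be careful with the citation of \cite{djas} for the Green-function asymptotics: that paper is precisely the one establishing the present theorem, so you should source the $\log G_t$ expansion from \cite{jo} and \cite{we} alone to avoid circularity. Second, your description of the limiting integral as ``$\mathbf g_\Gamma$ against $h_0^2$ over $\Gamma\times\Gamma$'' is too coarse as stated: the form $h^2$ contains the off-diagonal pieces $\psi_i(x)\bar\psi_j(x)\bar\psi_i(y)\psi_j(y)$, which do not localize to the graph but live on the product of the normalized components, so the leading coefficient is really a sum of integrals over $\widetilde X_0\times\widetilde X_0$ (with the appropriate weights) plus neck--neck and neck--thick contributions, not a pure graph quantity. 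The combinatorics of the reduction graph enters only through the leading term of $\log G_t$, while the measure against which it is integrated remains genuinely two-dimensional on each component. Third, you are right that the delicate point is uniform control on the shrinking neck; in practice this is exactly where most of the work in \cite{djas} lies, and your proposal does not yet indicate how to obtain the $O(1)$ (rather than $o(\log|t|)$) bound for the mixed regions. As written, what you have is a credible outline rather than a proof.
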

Combining this with Theorem \ref{beta} we reobtain the main result \cite[Theorem 1]{hrar} of Hain and Reed. 
\begin{cor} \label{asymptbeta}
Let $D$ denote the unit disk in $\CC$ and let $X \to D$ be a proper holomorphic map which is a family of compact Riemann surfaces of genus $g \geq 2$ over $D \setminus \{0\}$, and where the fiber $X_0$ over $0$ is a stable complex curve.
\begin{itemize}
\item[(i)] If $X_0$ is irreducible with only one node, then
\[ \beta_g(X_t) \sim -g \log |t| - (4g+2) \log \log (1/|t|) \]
as $t \to 0$.
\item[(ii)] If $X_0$ is reducible with one node and its components have genera $i$ and $g-i$ then
\[ \beta_g(X_t) \sim -4i(g-i) \log |t| \]
as $t \to 0$.
\end{itemize}
\end{cor}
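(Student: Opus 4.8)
The plan is to read off Corollary~\ref{asymptbeta} directly from Theorem~\ref{beta}, feeding in the two boundary asymptotics recorded in Theorems~\ref{asymptdelta} and~\ref{asymptphi}. The one structural point to notice is that the relation $f \sim g$ --- meaning that $f-g$ stays bounded as $t\to 0$ --- is preserved under $\rr$-linear combinations; since Theorem~\ref{beta} presents $\beta_g$ as the fixed $\rr$-linear combination $\tfrac13\big((2g-2)\,\varphi_g+(2g+1)\,\delta_g\big)$, it suffices to substitute the known asymptotic expansions of $\varphi_g(X_t)$ and $\delta_g(X_t)$ and to simplify the resulting coefficients of $\log|t|$ and $\log\log(1/|t|)$.

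For case (i), where $X_0$ is irreducible with a single node, I would insert $\varphi_g(X_t)\sim -\tfrac{g-1}{6g}\log|t|$ and $\delta_g(X_t)\sim -\tfrac{4g-1}{3g}\log|t|-6\log\log(1/|t|)$. The coefficient of $\log|t|$ in $(2g-2)\varphi_g+(2g+1)\delta_g$ is then $-\tfrac{1}{3g}\big((g-1)^2+(2g+1)(4g-1)\big)$, which collapses to $-3g$ by the elementary identity $(g-1)^2+(2g+1)(4g-1)=9g^2$; the coefficient of $\log\log(1/|t|)$ is $-6(2g+1)$. Dividing by $3$ gives $\beta_g(X_t)\sim -g\log|t|-(4g+2)\log\log(1/|t|)$, which is assertion (i).

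For case (ii), with $X_0$ reducible and components of genera $i$ and $g-i$, substituting $\varphi_g(X_t)\sim -\tfrac{2i(g-i)}{g}\log|t|$ and $\delta_g(X_t)\sim -\tfrac{4i(g-i)}{g}\log|t|$ makes the $\log|t|$-coefficient of $(2g-2)\varphi_g+(2g+1)\delta_g$ equal to $-\tfrac{4i(g-i)}{g}\big((g-1)+(2g+1)\big)=-12\,i(g-i)$, so that $\beta_g(X_t)\sim -4i(g-i)\log|t|$. No step here presents a genuine obstacle; the proof is purely a matter of bookkeeping with rational coefficients, and the only computation worth checking carefully is the simplification $(g-1)^2+(2g+1)(4g-1)=9g^2$, which is exactly what is responsible for the clean leading term $-g\log|t|$ in case (i).
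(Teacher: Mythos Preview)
Your proposal is correct and follows exactly the route the paper indicates: the paper simply states that the corollary is obtained by combining Theorem~\ref{beta} with the asymptotics in Theorems~\ref{asymptdelta} and~\ref{asymptphi}, without writing out the arithmetic. Your computation of the coefficients, including the key simplification $(g-1)^2+(2g+1)(4g-1)=9g^2$, is accurate and makes explicit what the paper leaves to the reader.
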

We finish by concentrating on the hyperelliptic case. Let $\hh_g \subset \mm_g$ denote the moduli space of hyperelliptic Riemann surfaces of genus $g$. An important first observation is that Kawazumi's $2$-form $e_1^J$ vanishes identically on $\hh_g$.
\begin{prop} \label{useful} Let $\hh'_g \subset \cc_g$ be the moduli space of hyperelliptic Riemann surfaces of genus $g$ marked with a Weierstrass point. Note that we have a topological covering space $\hh'_g \to \hh_g$ of degree $2g+2$. The restrictions of both $\kappa^*\omega_0$ and $I^*\omega_1$ to $\hh'_g \subset \cc_g$ are zero. In particular, the $2$-form $e_1^J$ (see equation (\ref{e_1^J})) vanishes on $\hh_g \subset \mm_g$.
\end{prop}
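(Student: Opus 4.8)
The plan is to exploit the hyperelliptic involution. A point of $\hh'_g$ is represented by a hyperelliptic Riemann surface $C$ of genus $g$ marked with a Weierstrass point $w$; let $\sigma$ denote the hyperelliptic involution of $C$, which fixes $w$. Since $\sigma$ is canonically determined by $C$, these involutions fit together into an automorphism of the universal pointed hyperelliptic curve over $\hh'_g$ lying over the identity of $\hh'_g$. On first homology $\sigma$ acts as $-\mathrm{id}$, so it acts as $-\mathrm{id}$ on the restriction to $\hh'_g$ of the local system $\hh_\rr$, hence as the inversion $[-1]$ on the restriction of the jacobian bundle $\jj$; acting diagonally, it acts as $(-1)^3=-\mathrm{id}$ on $\largewedge^3\hh_\rr$, hence again as $[-1]$ on the restriction of $\jj_1$.

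Next, both $\kappa\colon\cc_g\to\jj$ and $I\colon\cc_g\to\jj_1$ are defined functorially in the pointed Riemann surface — the former from the canonical class, the latter from iterated integrals — so they are equivariant for the automorphism just described. Restricted to $\hh'_g$ they are therefore invariant under $[-1]$; that is, $\kappa|_{\hh'_g}$ and $I|_{\hh'_g}$ take values in the $2$-torsion of $\jj$, resp.\ of $\jj_1$. (One can also see this directly: on a hyperelliptic curve $(2g-2)w$ is a canonical divisor, so $\kappa(C,w)=0$; and $\sigma$ induces $[-1]$ on the curve $C_w\subset J$, so that the Ceresa cycle $C_w-C_w^-$ is trivial and $I(C,w)=\frac12\gamma(C,w)=0$ by Proposition~\ref{pulte}.) Now a smooth section of a torus bundle with values in its $2$-torsion subgroup is locally constant for the flat (Gauss--Manin) structure, hence has vanishing first variation. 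Therefore, by equation (\ref{section*}),
\[ \kappa^*\omega_0|_{\hh'_g}=M_0(\d\kappa)^{\otimes 2}|_{\hh'_g}=0\,,\qquad I^*\omega_1|_{\hh'_g}=M_1(\d I)^{\otimes 2}|_{\hh'_g}=0\,. \]

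By the definition (\ref{e_1^J}) of $e_1^J$ this already gives $e_1^J|_{\hh'_g}=0$ as an identity of $2$-forms on $\cc_g$. To descend to $\hh_g\subset\mm_g$, recall from Proposition~\ref{AandJ} (equivalently, from (K1) and (K2)) that $e_1^J=e_1^A-\frac{2(2g-2)\sqrt{-1}}{2g+1}\,\deldelbar\,a_g$, which is the pullback along $p\colon\cc_g\to\mm_g$ of a $2$-form on $\mm_g$; since $p$ restricts to a surjective local diffeomorphism $\hh'_g\to\hh_g$, the vanishing of that pullback forces $e_1^J|_{\hh_g}=0$. The argument is essentially formal; the points demanding a little care are the equivariance of $\kappa$ and $I$ under $\sigma$ and the elementary fact that a torsion-valued section of a torus bundle has vanishing first variation — this is where the remaining work lies.
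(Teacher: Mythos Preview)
Your main argument via equivariance under the hyperelliptic involution is correct and is essentially a more conceptual packaging of the paper's proof. The paper argues directly that $\kappa(C,w)=0$ (since $(2g-2)w$ is canonical) and that the Ceresa cycle $C_w-C_w^-$ vanishes (since $[-1]$ restricts to $\sigma$ on $C_w$), hence $\gamma|_{\hh'_g}$ is the zero section; it then concludes using that $\omega_k$ pulls back to zero along the zero section, together with the identity $I^*\omega_1=\tfrac14\,\gamma^*\omega_1$ coming from $\gamma=2I$. Your route --- $\sigma_*=-\mathrm{id}$ on $\hh_\rr$ forces $\kappa|_{\hh'_g}$ and $I|_{\hh'_g}$ into the $2$-torsion, hence $d\kappa=dI=0$, hence the pullbacks vanish by (\ref{section*}) --- avoids having to pass through $\gamma$ and treats $\kappa$ and $I$ uniformly. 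Your explicit descent of $e_1^J$ to $\hh_g$ via Proposition~\ref{AandJ} is also a welcome clarification that the paper leaves implicit.

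One genuine slip: in your parenthetical you write ``$I(C,w)=\tfrac12\gamma(C,w)=0$''. This is not meaningful --- division by $2$ is not defined on the torus $J_1$, and Proposition~\ref{pulte} only gives $2I=\gamma$, so from $\gamma(C,w)=0$ you may conclude that $I(C,w)$ is $2$-torsion, not that it is zero. (Indeed the paper remarks afterwards that $I$ is merely \emph{locally constant} on $\hh'_g$, and refers to Tadokoro's explicit computation of these $2$-torsion values.) This does not affect your main argument, which only needs $I|_{\hh'_g}$ to be $2$-torsion, but the parenthetical as written overclaims.
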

\begin{proof} Let $C$ be a hyperelliptic Riemann surface of genus $g \geq 2$ and $w$ a Weierstrass point on $C$. We have that $(2g-2)w$ is a canonical divisor on $C$ and it follows that $\kappa(C,w)$ is zero. Let $C_w$ be the Abel-Jacobi image of $C$ into $J(C)$ using $w$ as a basepoint. We have that $[-1]$ then acts as $\iota$ on $C_w$ and hence the Ceresa cycle $C_w-C_w^-$ is zero as well. As the forms $\omega_k$ vanish along the zero section, we find that the restrictions of both $\kappa^*\omega_0$ and $I^*\omega_1=(1/4)\,\gamma^*\omega_1$ to $\hh'_g$ are zero as required. 
\end{proof}
In fact, the above argument shows that the hyperelliptic pointed harmonic volume $I$ is locally constant on $\hh'_g$. An explicit description of $I$ for Weierstrass-pointed hyperelliptic Riemann surfaces is given by Y. Tadokoro in \cite{tad}. 

As $e_1^J$ vanishes identically on $\hh_g$, we find from Proposition \ref{between} and equation (\ref{defbeta}) that
\[ \frac{\deldelbar}{\pi \sqrt{-1}} \, \beta_g = -(8g+4) \, \omega_{\mathrm{Hdg}}  \]
on $\hh_g$. This relation suggests that the restriction of $\beta_g$ to $\hh_g$ may be expressed as a rational multiple of the logarithm of the Petersson norm of a suitable nowhere vanishing modular form on $\hh_g$. In \cite{djsecond} we have proved this to be true indeed. Write $n = {2g \choose g+1}$, then one has
\[ \beta_g|_{\hh_g} = -(8g+4)g\log(2\pi) - (g/n) \log \|\Delta_g \|_{\mathrm{Pet}} \, , \]
where $\Delta_g$ is the so-called discriminant modular form (a product of Thetanullwerte) of weight $(8g+4)n/g$ on $\hh_g$. Combining with Theorem \ref{beta} we obtain the closed expression
\[ 2\pi \, (2g-2) a_g =  -8(2g+1)g \log(2\pi) - 3(g/n) \log\|\Delta_g\| - (2g+1)\delta_g \]
for the hyperelliptic Kawazumi-invariant.

\vspace{1cm}

\noindent Address of the author: \\  \\
Robin de Jong \\
Mathematical Institute \\
University of Leiden \\
PO Box 9512 \\
2300 RA Leiden \\
The Netherlands \\
Email: \verb+rdejong@math.leidenuniv.nl+

\end{document}